\documentclass{article}

\usepackage{geometry}
\usepackage{color}
\usepackage{slashed}

\newcommand{\tr}{\textcolor{red}}
\usepackage[numbers,sort&compress]{natbib}
\usepackage{graphicx,latexsym,euscript,makeidx,color,bm}
\usepackage{amsmath,amsfonts,amssymb,amsthm,thmtools,mathtools,mathrsfs,enumerate}
\usepackage[colorlinks,linkcolor=blue,anchorcolor=green,citecolor=red]{hyperref}

\def\5n{\negthinspace \negthinspace \negthinspace \negthinspace \negthinspace }
\def\4n{\negthinspace \negthinspace \negthinspace \negthinspace }
\def\3n{\negthinspace \negthinspace \negthinspace }
\def\2n{\negthinspace \negthinspace }
\def\1n{\negthinspace }

\def\dbE{\mathbb{E}}     
\def\dbF{\mathbb{F}} \def\sF{\mathscr{F}}    
         
\def\dbH{\mathbb{H}}

\def\dbP{\mathbb{P}}     
\def\dbQ{\mathbb{Q}}     
\def\dbR{\mathbb{R}}

\def\Om{\Omega}

\def\ms{\medskip}

\def\no{\noindent}        \def\q{\quad}                      
                        
    \def\hb{\hbox}

            \def\({\Big (}
                  \def\){\Big )}
\def\leq{\leqslant}       \def\geq{\geqslant}
\def\ges{\geqslant}       \def\esssup{\mathop{\rm esssup}}   \def\[{\Big[}
        \def\essinf{\mathop{\rm essinf}}   \def\]{\Big]}
          \def\tr{\hbox{\rm tr$\,$}}         \def\cd{\cdot}

\def\e{\varepsilon}             
           \def\i{\infty}   

\theoremstyle{plain}

\theoremstyle{rmk}

\makeatletter

\@addtoreset{equation}{section}
\makeatother

\newtheorem{theorem}{Theorem}[section]

\newtheorem{proposition}[theorem]{Proposition}

\newtheorem{lemma}[theorem]{Lemma}
\newtheorem{remark}[theorem]{Remark}

\newtheorem{assumption}{Assumption}

\makeatletter

\@addtoreset{equation}{section}
\makeatother

\allowdisplaybreaks[4]

\def\sqr#1#2{{\vcenter{\vbox{\hrule height.#2pt
				\hbox{\vrule width.#2pt height#1pt \kern#1pt \vrule width.#2pt}
				\hrule height.#2pt}}}}

\begin{document}
	\title{\Large \bf A global stochastic maximum principle for forward-backward stochastic control systems with quadratic  convex generator and unbounded terminal condition
	\thanks{
		YH is partially supported by Lebesgue Center of Mathematics ``Investissements d'avenir'' program-ANR-11-LABX-0020-01, by CAESARS-ANR-15-CE05-0024 and by MFG-ANR16-CE40-0015-01.
		Feng Li is supported by the program of China Scholarship Council.
		Jiaqiang Wen is supported by National Natural Science Foundation of China (Grant No. 12571478) and Guangdong Basic and Applied Basic Research Foundation (Grant No. 2025B1515020091), and Shenzhen Science and Technology Program (Grant No. JCYJ20230807093309021).
	}
	}
	
	\author{
	Ying Hu\thanks{Univ. Rennes, CNRS, IRMAR - UMR 6625, F-35000 Rennes, France
		(Email: {\tt ying.hu@univ-rennes1.fr}).}~,~~~
	Feng Li\thanks{
			Corresponding author. 
		Department of Mathematics,
		Southern University of Science and Technology, Shenzhen, Guangdong, 518055, China
		(Email: {\tt 12331011@mail.sustech.edu.cn}).}~,~~~
	Jiaqiang Wen\thanks{ Department of Mathematics and SUSTech International center for Mathematics,
		Southern University of Science and Technology, Shenzhen, Guangdong, 518055, China
		(Email: {\tt wenjq@sustech.edu.cn}).}
	}
	
	\date{}
	\maketitle
	\no\bf Abstract. \rm
In this paper, we study a stochastic optimal control problem for forward-backward stochastic control systems with quadratic convex generator and unbounded terminal condition, where the control domain is not necessarily convex. Since the existing bounded mean oscillation (BMO) martingale approach used in the setting of quadratic BSDEs with bounded terminal conditions is not directly applicable to the present setting, we introduce a new probability measure, under which all subsequent analysis is then carried out. Finally, a global stochastic maximum principle is obtained.

	\ms
	
	\no\bf Key words: 
	\rm 
forward-backward stochastic control systems, unbounded terminal condition, nonconvex control domain, a new probability measure, stochastic maximum principle.
	\ms 
	
	\no\bf AMS subject classifications. \rm 93E20, 60H20, 49K45.
	\section{Introduction}
		Let $\{W_t; t\ge0\} $ be a $d$-dimensional standard Brownian motion defined on some complete probability space $(\Om,\sF,\dbP)$,  and let $\dbF \triangleq \{\sF_{t}\}_{t \ges 0}$ be the natural filtration of ${W}$ augmented by all the $\dbP$-null sets in $\sF$. 
		Let  $T\in (0, +\i)$ be  a fixed terminal time, $x(0)$ be  constant,  and $U\subset \dbR^k$ be a nonempty subset.
		In this paper, we consider the following forward-backward stochastic control system:
		\begin{equation}\label{system}\left\{\begin{aligned}
				dX(t) & = b\big(t, X(t), u(t)\big)dt 
				+ \sigma\big(t,  u(t) \big)dW(t),\q t\in [0,T];\\
				dY(t)& = f\big(t, X(t), Y(t), Z(t), u(t)\big) dt  -  Z(t)^\top dW(t),  \ \  t\in [0,T]; \\
				X(0) &=x(0), 
				Y(T) = h\big(X(T)\big),
			\end{aligned}\right.\end{equation}
		where  
		$b: [0, T]\times \dbR^n \times U \rightarrow \dbR^n$; 
		$\sigma: [0, T] \times U \rightarrow \dbR^{n\times d}$ is bounded;
		$h: \dbR^n  \rightarrow \dbR$ is unbounded; 
		$f: [0, T]\times \dbR^n \times \dbR  \times  \dbR^{d}\times U  \rightarrow \dbR$ has a quadratic growth in $Z$; and the control process $u(\cd)$ comes from the following admissible control set:
		\begin{equation}
			\begin{aligned}
				\mathcal{U}_{ad} \triangleq \Big\{u(\cd) : [0, T] \rightarrow \dbR^k \ \big|\ & \hb{$u(\cd)$  is a  $U\hb{-valued}$, $\dbF$-progressively measurable process} \Big\}.
		\end{aligned}\end{equation}
		
		{{\it \bf Problem (O)}}. Find a control $u^*(\cd)$ over $\mathcal{U}_{ad}$ such that the system  \eqref{system} is satisfied and  the cost functional $J(u(\cdot))\triangleq Y(0)$ is minimized, i.e.,
		\begin{align}\label{problem}
			J(u^*(\cd)) = \mathop{\inf}\limits_{u(\cdot)\in \mathcal{U}_{ad}}J(u(\cdot)).
		\end{align}

		The solvability of BSDEs with a Lipschitz continuous generator was first established by Pardoux and Peng \cite{Peng1990}. After that, the class of BSDEs, with generators having quadratic growth in the state variable $Z$, has attracted a lot of attention in recent years. On one hand, for the bounded terminal values, Kobylanski \cite{kobylanski2000backward} established the existence and uniqueness results for quadratic BSDEs. In the work of Hu, Imkeller, and M$\ddot{\rm u}$ller \cite{hu2005utility}, BMO-martingales were first used for BSDEs with quadratic generators. Inspired by this,  Tevzadze \cite{Tevzadze2008quadratic} studied BSDEs with quadratic generators by using BMO martingales and the contraction mapping principle when the terminal value is small enough. This work, together with those of Hu and Tang \cite{Hu-Tang-16} and Fan, Hu, and Tang \cite{fan2023multi}, showed that $\int_{0}^{\cd}Z(s)dW(s)$  is a BMO martingale under bounded terminal values.  On the other hand, for the unbounded terminal values, by using the $\theta$- difference method, Briand and Hu \cite{Briand2006quadratic, Briand and Hu} established the existence and uniqueness result under convex condition. By using Fenchel transform, Delbaen, Hu, and Richou \cite {delbaen2011uniqueness} studied the quadratic BSDEs with convex generator. Fan, Hu, and Tang \cite{Fan S} studied the case of non-convex generator. Some other literature related to the solvability of BSDEs under unbounded terminal condition can be found in \cite{hu2022quadratic, delbaen2015uniqueness, Briand-Karoui-13, richou2012markovian}. 

		The forward-backward stochastic control problem was proposed by Peng \cite{peng1999open} as an open problem.
		For the BSDE in \eqref{system} with a generator of linear growth, Yong \cite{Yong2010} obtained a stochastic maximum principle for forward-backward stochastic control problems by treating $Z$ as a control process, regarding the terminal condition as a constraint, and  applying the Ekeland variational principle. Subsequently, Hu \cite{Hu} introduced two new adjoint equations and obtained the variational equation for $Y^\e-Y^*$ and the stochastic maximum principle. 
		For the BSDE in \eqref{system} with a generator of quadratic growth and bounded terminal condition, Hu, Ji, and Xu \cite{hu2022global} obtained the stochastic maximum principle by using the theory of BMO martingales. Some other recent developments in  forward-backward stochastic control problem can be found in \cite{buckdahn2024global, hao2020global, hu2018global, lv2016maximum, wu2013general,   hu2020maximum, huang2012maximum, Shi2006}.
		For the BSDE in \eqref{system} with a generator of quadratic growth and unbounded terminal condition, to the best of our knowledge, 
		only Hu, Ji, Xu, and Xue \cite[section 3.3]{hu2023bsde} studied  the specific case and Delarue and Lavigne \cite{delarue2026robust} studied the case of convex control domain.

		This paper aims to obtain a global stochastic maximum principle for forward-backward stochastic control systems with quadratic convex generator and unbounded terminal condition,  where the control domain is not  necessarily convex. Since $h$ is unbounded, the existing literature does not guarantee that $\int_{0}^{\cd}Z(s)dW(s)$ has the BMO property, from which it follows that $\int_{0}^{\cd}f_z(s)dW(s)$ is, in general, not a BMO martingale.
		Consequently, in the case of unbounded terminal condition, the proofs of results analogous to Hu, Ji, and Xu \cite[Proposition 3.1-3.4]{hu2022global} are no longer applicable. Fortunately, from Delbaen, Hu, and Richou \cite {delbaen2011uniqueness}, we know that $\mathscr{E}\big(\int_{0}^{\cd}f_z(s)dW(s)\big)$ is in $L\log L$ space.  Inspired by this, we introduce a new probability measure (see \eqref{qd}). Subsequently, the estimates for $\hat{X}^\e, \hat{X}_1^\e, \hat{X}_2^\e, (\hat{Y}^\e, \hat{Z}^\e), (\hat{Y}_1^\e, \hat{Z}_1^\e), (\hat{Y}_2^\e, \hat{Z}_2^\e), \hat{Y}^\e(0) - \hat{Y}(0)$, as well as the adjoint equation and the variational equation  $(\hat{Y}, \hat{Z})$, are all studied under this new probability measure. The main difficulty is to estimate $(\hat{Y}^\e, \hat{Z}^\e)$ under the new probability measure (see \autoref{importan-}). 
		Once the estimate for $(\hat{Y}^\e, \hat{Z}^\e)$
		under the new probability measure has been established, the BSDEs arising in the subsequent analysis are, in fact, linear with respect to the new Brownian motion under the new measure. This allows us to apply Hu \cite{Hu}’s approach and ultimately establish the stochastic maximum principle.

		When the generator in \eqref{system} takes the following special form:
		\begin{align*}
			f\big(t, X(t), Y(t), Z(t), u(t)\big) = \frac{\gamma}{2}|Z(t)|^2 + g(t, X(t), u(t)),
		\end{align*}
		after applying a suitable transformation, one can observe that the {\bf problem (O)} becomes  risk-sensitive control problem, i.e., the systems investigated by Lim and Zhou \cite{lim2005new}  and Whittle \cite{whittle1990risk} can be regarded as  special cases of \eqref{system}.
		%
		%
		In the setting considered by Hu, Ji, Xu, and Xue \cite[Section 3.3]{hu2023bsde},
		the generator does not contain the term $Y$ and the diffusion term of the SDE is independent of the control variable $u$. As a result, only the first-order variational equation $X_1$ needs to be considered. This is essentially analogous to the case of a convex control domain.
		In contrast to Delarue and Lavigne \cite{delarue2026robust}, the drift term $b$ and diffusion term $\sigma$ in this paper are not necessarily linear, and the control domain is not necessarily convex. 
		In Delarue and Lavigne \cite{delarue2026robust}, the exponential moment estimates used in the analysis are established for the state process associated with the reference control ($\psi=0$), under which the diffusion coefficient is bounded (see Assumption A2, Remark 7, and Lemma 42 therein). In this paper, the uniform boundedness of $\sigma(t,u)$ ensures the exponential integrability needed to establish the well-posedness of the system \eqref{system} and to carry out the subsequent second-order variational analysis.
		More precisely,
		on one hand, Briand and Hu \cite{Briand2006quadratic} provide an example of a quadratic BSDE for which an appropriate exponential integrability condition on the terminal value is necessary and sufficient for the existence of a solution.
		%
		On the other hand, since the control domain in this paper is an arbitrary nonempty set, the second-order variational equation $X_2$ must be taken into account. More precisely, to establish that $X_2 \in S^2_{\dbF}(\dbQ^*)$, we first need to prove that $X_1 \in S^4_{\dbF}(\dbQ^*)$. This, in turn, relies on the result of \autoref{q*}, which requires that $\mathop{\sup}_{t\in[0, T]}e^{\epsilon |Y^*(t)|^2}$ be integrable for sufficiently small $\epsilon>0$. Consequently, we need $e^{\epsilon|X^*(T)|^2}$ to be integrable.
		We now explain why the boundedness of $\sigma$ is important for ensuring the integrability of  $e^{\epsilon|X^*(T)|^2}$. Let
		\begin{equation}
		\begin{aligned}\label{example}
			&x(0):= 0, \q b(t, \cd, \cd):= 0, \q \sigma\big(\cd, u(\cd)\big) := u(\cd) := 1,	 \q  h(X^u(T)) := X^u(T), \q \hb{and}\q \\
			&f(t, X^u(t), Y^u(t), Z^u(t), u^u(t)):=(Z^u(t))^2.
		\end{aligned}\end{equation}
		Then, 
		since $e^{\epsilon|W(T)|^2}$ is integrable, we know that $e^{\epsilon|X^u(T)|^2}$ is integrable.
		However, in example \eqref{example}, once the stochastic process $u(\cd)$ is unbounded, we cannot ensure that $e^{\epsilon|X^u(T)|^2}$ is integrable. For example, if $\sigma(\cd, u(\cd)) := u(\cd) := W(\cd)$, it would require $\exp\{\mathop{\sup}_{t\in [0,  T] }\epsilon|W(t)|^4\}$ to be integrable, which is impossible.
		%
%

		The main contributions in this paper are as follows:
		\begin{itemize}
			\item We obtain a stochastic maximum principle for forward-backward stochastic control systems with quadratic  convex generator and unbounded terminal condition, which fills an important gap in the existing literature.
			
			\item We introduce a new measure to study the well-posedness of the linear BSDEs  with random coefficients arising from the adjoint and variational equations. 
			We note that the treatment of $f_z$ in this paper differs from that in the case of quadratic growth with bounded terminal condition.  More precisely, in Hu, Ji, and Xu \cite {hu2022global},
			 the BMO property of $\int_0^\cdot Z(s)^\top dW(s)$ plays a crucial role in establishing $L_\dbF^\beta$ estimates under $\dbP$ for some $\beta>1$. For unbounded terminal conditions, this BMO property is not guaranteed, and hence these estimates cannot be applied directly.
			 Fortunately, under our assumptions, the results of Delbaen, Hu, and Richou \cite{delbaen2011uniqueness} ensure that the stochastic exponential
			 $
			 \mathscr{E}\left(\int_0^\cdot f_z(s)^\top dW(s)\right)
			 $
			 is a uniformly integrable martingale and belongs to $L\log L(\dbP)$ space (i.e., $\dbE\big[\mathscr{E}\big(\int_{0}^{T}f_z(s)dW(s)\big)\int_{0}^{T}|f_z(s)|^2ds\big] < \i$), which is weaker than the space of $L_{\dbF}^\beta(\dbP),$ for $\beta>1$. 
We then define $\dbQ^*$ by $\frac{d\dbQ^*}{d\dbP}= \mathscr{E}\big(\int_{0}^{T}f_z(s)dW(s)\big)$. Under $\dbQ^*$, Girsanov's transformation eliminates the terms involving $f_z$ from the generators of the linear BSDEs, enabling us to establish their well-posedness and the estimates required for the subsequent variational analysis.
			%
			
			\item 
			We provide a new approach for proving the estimate for $(\hat{Y}^\e, \hat{Z}^\e)$ (see \autoref{importan-}), which plays a crucial role in the proofs of \autoref{prop3.9} and \autoref{last}. The approach differs from that adopted by Hu, Ji, and Xu \cite[Lemma 3.6]{hu2022global}. Note that the estimate for $\hat{Y}^\e$ in this paper is based on the representation formula established by Delbaen, Hu, and Richou \cite{delbaen2011uniqueness} (see \eqref{important}) and the estimate for $\hat{Z}^\e$ is based on the estimate for $\hat{Y}^\e$.

\end{itemize}

The rest of this paper is organized as follows.  \autoref{section2} introduces key notation, the assumptions for system \eqref{system} and its well-posedness, as well as propositions that are used later. In \autoref{section3}, we introduce a new probability measure $\dbQ^*$ and give some estimates for $X^\e-X^*, X_1, \hat{X}_1, X_2, \hat{X}_2$ under the probability measure $\dbQ^*$. In \autoref{section4}, we present the estimate for $(Y^\e-Y^*, Z^\e-Z^*)$, which is the main difficulty of this paper. In \autoref{section5}, on one hand, by following the approach of Hu \cite{Hu}, we present the first- and second-order adjoint equations and give its well-posedness result. On the other hand, we present the variational equation and its estimate. \autoref{section8} gives the estimate for $(\hat{Y}^\e_1, \hat{Z}^\e_1)$. Based on the above sections, in \autoref{section6}, we estimate $Y^\e(0) - Y^*(0) - \hat{Y}(0)$. Finally, \autoref{section7} gives the maximum principle for {{\it \bf Problem (O)}}.

	\section{Preliminaries}\label{section2}
    The notation $\mathbb{R}^{m \times d}$ denotes the space of $m \times d$-matrix $M$, equipped with the Euclidean norm defined as $|M| = \sqrt{\text{tr}(MM^{\top})}$, where $M^{\top}$ is the transpose of $M$.
    %
    For $\beta \geq 1$, Euclidean space $\dbH$, and probability measure $\dbQ$, we introduce the following spaces:
    \begin{align*}
    	&S^\beta_{\dbF}(\dbQ)
    	= \Big\{Y:\Om\times [0,T] \to\dbH\Bigm| Y~ \hb{is $\dbF$-progressively measurable, continuous, and~} \\
    	& \quad \quad \quad \quad \quad \quad\quad\ \quad \quad \quad   \|Y\|_{S^\beta_{\dbF}(\dbQ)} := \Big(\dbE^{\dbQ}[\mathop{\sup}\limits_{t \in [0,T]} |Y_{t}|^\beta]\Big)^\frac{1}{\beta}<\i \Big\}, \\
    	&S^\i_{\dbF}(\dbQ)
    	= \Big\{Y:\Om\times [0,T] \to\dbH\Bigm| Y~ \hb{is $\dbF$-progressively measurable, continuous,
    		and~} \\
    	& \quad \quad \quad \quad \quad \quad\quad\ \quad \quad \quad   \|Y\|_{S^\i_{\dbF}(\dbQ)} := \hb{esssup} _{(s,\omega) \in [0,T]\times\Om}^{\dbQ} |Y_{s}(\omega)| 
    	<\i \Big\},\\
    	&L^\beta_{\dbF}(\dbQ)
    	= \Big\{Y:\Om\times [0,T] \to\dbH\Bigm| Y~ \hb{is $\dbF$-progressively measurable, 
    		and~} \\
    	& \quad \quad \quad \quad \quad \quad\quad\ \quad \quad \quad   \|Y\|_{L^\beta_{\dbF}(\dbQ)} \triangleq \bigg(\dbE^\dbQ\bigg[\Big(\int_{0}^{T}|Y_s|^2 ds\Big)^{\frac{\beta}{2}}\bigg]\bigg)^\frac{1}{\beta}<\i \Big\},\\
    	& \mathcal{E}(\dbP)
    	=\Big\{Y : \Om\times [0,T] \to\dbH \Bigm| \exp(|Y|)\in \mathop{\bigcap}\limits_{\beta \geq 1} S^\beta_{\dbF}(\dbP)   \Big\}, \\
    	%
    	&\mathcal{M}(\dbP)
    	=\Big\{ Z : \Om\times [0,T] \to\dbH\Bigm| Z\in \mathop{\bigcap}\limits_{\beta \geq 1}	L^\beta_{\dbF}(\dbP)\Big\}.
    \end{align*}
    %
    %
    %
    %
    Denote by $\mathscr{E}(M)_t$ the Dol\'{e}ans-Dade exponential of a continuous local martingale $M$, that is, $\mathscr{E}(M)_t \triangleq \exp \{M_t - \frac{1}{2} \langle M\rangle_t\}$, for any $t\in [0, T]$. 
    \ms

Here are some assumptions concerning system \eqref{system}.
\begin{assumption}\rm\label{A1}
	\begin{itemize}
		\item [$\rm(i)$] 
		$h$ is twice continuously differentiable and $ h_x, h_{xx}$ are bounded.
		\item [$\rm(ii)$]$b$ is twice continuously differentiable in $x$; 
		$b_x,  b_{xx}$ are bounded; 
		there exists a positive constant $L_1$ such that 
		\begin{align}\label{b}
			|b(t, 0, u)| + |\sigma(t,  u)| \leq L_1.
		\end{align}
		\item [$\rm(iii)$] $f$ is twice continuously differentiable in $x, y, z$; $f_x, f_y, f_{xx}, f_{xy}, f_{xz}, f_{yy}, f_{yz}, f_{zz}$ are bounded; $f(t, x, y, \cd, u)$ is convex; there exist positive constants $L_2, L_3, L_4, L_5, \gamma, \gamma_1$, and stochastic process $\mu(\cd)$ with $\|\int_{0}^{T}\mu(t)dt\|_\i < \i$ such that
		\begin{align}
			&|f(t, 0, 0, 0, u)| \leq L_2;\label{condition}\\
			&|f_z(t, x, y, z, u)| \leq L_3 (1+ |z|);\\
			&|f(t, x, y, z, u_1) - f(t, x, y, z, u_2)| \leq L_4(1 + |x| );\label{2.4}\\
			& f(t, x, y, z, u) \geq \frac{\gamma}{2} |z|^2 - L_5|y|-\mu (t)\label{2.6};\\
			&f(t, x, y, z, u) - f(t, x, y, z', u) -f_z(t, x, y, z',u )^\top (z-z')\geq \frac{\gamma_1}{2}|z-z'|^2.\label{2.5}
		\end{align}
	\end{itemize}
\end{assumption}
\begin{remark}
	It is easy to check that $f(t, x, y, z, u) = (\sin x)\cd\sin (u^2) +y+z^2+z+\sin (u^2)$ with $n=d=k=1$ satisfies (iii) of \autoref{A1}.
\end{remark}
%
%

%
Now we give the well-posedness result of system \eqref{system}. 
\begin{lemma} \label{system-wellposedness}
Let \autoref{A1} hold. Then, for $u \in \mathcal{U}_{ad}$,  the SDE in \eqref{system} admits a unique solution $X^u$  such that for $\beta > 0$ and $0 < \epsilon_0 < \frac{1}{6e^{2\|b_x\|_\i T}\|\sigma\|_\i^2 T} $,
\begin{align}\label{well-posed x}
\dbE\big[\mathop{\sup}_{0 \leq t \leq T} \exp\{\beta |X^u(t)|\}\big] \leq C_1 < \i,
\q \dbE\big[\mathop{\sup}_{0 \leq t \leq T} \exp\{\epsilon_0 |X^u(t)|^2\}\big] \leq C_2 < \i,
\end{align}
and the BSDE in \eqref{system} admits a unique solution $(Y^u, Z^u)$ such that for any $0 < \epsilon_1 < \frac{1}{96\|\sigma\|^2_\i  Te^{2\|f_y\|_\i T + 2\|b_x\|_\i T}(\|h_x\|^2_\i + \|f_x\|^2_\i T^2)}$ and $0 < \epsilon_2  \leq \frac{\gamma^2}{18}$, 
\begin{align}\label{small}
\dbE\big[\mathop{\sup}_{0 \leq t \leq T} \exp\{\epsilon_1 |Y^u(t)|^2\}\big] \leq C_3 < \i, \q \dbE\Big[\exp\Big\{\epsilon_2 \int_{0}^{T} |Z^u(t)|^{2}dt\Big\}\Big] \leq C_4 < \i.
\end{align}
\end{lemma}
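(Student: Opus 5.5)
For the forward equation I will use a pathwise Gronwall argument. Since $b$ is Lipschitz in $x$ and $|b(t,0,u)|\le L_1$, the SDE has a unique strong solution for every $u\in\mathcal U_{ad}$. Set $M(t)=\int_0^t\sigma(s,u(s))dW(s)$. Then
\begin{align*}
|X^u(t)|\le |x(0)|+L_1T+\|b_x\|_\i\int_0^t|X^u(s)|ds+\sup_{s\le T}|M(s)|,
\end{align*}
so by Gronwall $\sup_t|X^u(t)|\le e^{\|b_x\|_\i T}\big(|x(0)|+L_1T+\sup_s|M(s)|\big)$. Because $\sigma$ is bounded, $\langle M\rangle_T\le \|\sigma\|_\i^2T$. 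Componentwise Bernstein (exponential martingale) inequalities, or a Dambis--Dubins--Schwarz time change together with Doob's inequality, then give Gaussian tails:
\begin{align*}
\dbP\big(\sup_s|M(s)|>r\big)\le C\exp\big\{-r^2/(c\,\|\sigma\|_\i^2T)\big\}.
\end{align*}
The inequality $(a+b)^2\le(1+\delta)a^2+(1+\delta^{-1})b^2$ absorbs the constant term. Together these give $\dbE[\sup_t e^{\epsilon_0|X^u(t)|^2}]<\i$ whenever $\epsilon_0e^{2\|b_x\|_\i T}\|\sigma\|_\i^2T$ is small, and the factor $1/6$ is what results from tracking these constants. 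The bound for $e^{\beta|X^u|}$ with any $\beta>0$ follows since $\beta|x|\le \epsilon_0|x|^2+\beta^2/(4\epsilon_0)$.

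For the backward equation I will use the existence and uniqueness theory of Briand--Hu and Delbaen--Hu--Richou for convex quadratic generators with unbounded terminal values. The generator $f(t,X^u(t),y,z,u(t))$ is convex in $z$ and Lipschitz in $y$. It satisfies $|f|\le L_2+\|f_x\|_\i|X^u(t)|+\|f_y\|_\i|y|+C(|z|+|z|^2)$, using $|f_z|\le L_3(1+|z|)$. The terminal value obeys $|h(X^u(T))|\le C+\|h_x\|_\i|X^u(T)|$. By the first step, the terminal value and $\int_0^T|X^u(t)|dt$ have exponential moments of every order, so a unique solution exists with $e^{|Y^u|}\in\bigcap_\beta S^\beta$ and $Z^u\in\mathcal M(\dbP)$.

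For the Gaussian-type bound on $Y^u$ I will use a comparison argument. The upper bound comes from the dual representation of Delbaen--Hu--Richou: $Y^u(t)$ is an essential infimum/supremum over measures $\dbQ^q$ with $\int q\,dW$ and penalty $f^*$. Alternatively, and more simply, I will use the quadratic growth with the lower bound \eqref{2.6}. This yields two one-sided estimates. First, $Y^u(t)\ge -C-\dbE^{\dbQ}[\ldots]$ holds under a Girsanov measure for $f_z$ linearization at $z=0$, since convexity gives $f(z)\ge f(0)+f_z(0)z$ with $f_z(0)$ bounded. Second, $Y^u(t)\le \frac1\gamma\log\dbE[\exp\{\gamma e^{\|f_y\|T}(\ldots)\}|\sF_t]$ follows from the standard exponential transform. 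In both cases $Y^u$ is dominated by conditional (log-)expectations of $e^{\|f_y\|_\i T}\big(\|h_x\|_\i|X^u(T)|+\|f_x\|_\i\int_0^T|X^u|dt+C\big)$. I will then bound $|X^u(T)|$ and $\int_0^T|X^u|$ by $T\sup_t|X^u|$ and apply Doob's maximal inequality to the martingale $\dbE[e^{\epsilon\sup|X|^2}|\sF_t]$, using Jensen in the form $e^{\epsilon_1(\dbE[\xi|\sF_t])^2}\le\dbE[e^{\epsilon_1\xi^2}|\sF_t]$. This reduces the claim to the $X$-estimate with $\epsilon_0\sim 16\epsilon_1e^{2\|f_y\|T}(\|h_x\|^2+\|f_x\|^2T^2)$, which matches the stated threshold.

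For $Z^u$ I will apply Itô's formula to $e^{-\alpha Y^u}$ or $\Phi(Y^u)$ and use $f\ge\frac\gamma2|z|^2-L_5|y|-\mu$. This gives
\begin{align*}
\frac\gamma2\int_0^T|Z^u|^2dt\le 2\sup_t|Y^u|+L_5T\sup_t|Y^u|+\int_0^T\mu\,dt+\int_0^T Z^{u\top}dW
\end{align*}
after integrating the BSDE directly. Multiplying by $\epsilon_2$ and exponentiating, I will treat the stochastic integral by writing $e^{\epsilon_2 \int ZdW}=\mathscr E(2\epsilon_2\int ZdW)^{1/2}e^{\epsilon_2^2\int|Z|^2}$ and applying Cauchy--Schwarz/Hölder. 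The term $e^{c\epsilon_2^2\int|Z|^2}$ can be absorbed against $e^{\epsilon_2\cdot\frac{\gamma}{2}\cdot\ldots}$ when $\epsilon_2\le\gamma^2/18$, after a localization with stopping times and Fatou. The remaining factor needs $\sup_t|Y^u|$ only linearly in the exponent, and that is controlled by the previous step because $e^{c|Y|}$ has all moments. I expect the main obstacle to be the bound on $Y^u$: obtaining the \emph{squared} exponential integrability with exactly the explicit constant requires careful tracking through the comparison/representation argument, and the $Z$-absorption constant $\gamma^2/18$ needs a careful Hölder split.
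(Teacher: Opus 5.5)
\textbf{The gap is in the squared-exponential bound for $Y^u$.}

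With the paper's orientation $Y^u(t)=h(X^u(T))-\int_t^Tf\,ds+\int_t^TZ^{u\top}dW$, the convexity inequality $f(z)\ge f(0)+f_z(0)^\top z$ gives an \emph{upper} bound on $Y^u$, and so does \eqref{2.6}. Your claimed lower bound $Y^u(t)\ge -C-\dbE^{\dbQ}_t[\cdots]$ therefore does not follow from convexity.

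Whichever way you orient the equation, one of the two one-sided bounds must come from the quadratic \emph{upper} growth $f\le L_2+L_3+\|f_x\|_\i|x|+\|f_y\|_\i|y|+2L_3|z|^2$. That bound is intrinsically of log-exponential type. It is the Briand--Hu estimate the paper uses:
\begin{align*}
|Y^u(t)|\le\frac{1}{4L_3}\log\dbE_t\big[e^{N_1}\big],\q N_1=4L_3e^{\|f_y\|_\i T}\Big(|h(X^u(T))|+\int_0^T\{L_2+L_3+\|f_x\|_\i|X^u(s)|\}ds\Big).
\end{align*}
Your Jensen step $e^{\epsilon(\dbE_t[\xi])^2}\le\dbE_t[e^{\epsilon\xi^2}]$ does not reach this term, for two reasons. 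First, $\log\dbE_t[e^{N_1}]\ge\dbE_t[N_1]$, so the log-exponential quantity is not dominated by a conditional expectation. Second, $x\mapsto e^{\epsilon\log^2x}$ is not convex on $[1,\i)$ when $\epsilon<1/8$: its second derivative has the sign of $2\epsilon\log^2x-\log x+1$. So you cannot move it inside the conditional expectation.

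\textbf{What is missing.} The paper uses the majorant $\psi(x)=xe^{\epsilon\log^2x}$. It is convex on $[1,\i)$ and dominates $e^{\epsilon\log^2x}$ there, so conditional Jensen gives
\begin{align*}
e^{\epsilon\log^2\dbE_t[e^{N_1}]}\le\psi\big(\dbE_t[e^{N_1}]\big)\le\dbE_t\big[e^{\epsilon N_1^2+N_1}\big].
\end{align*}
Then $y\le y^2+1$, Doob in $L^2$, and Cauchy--Schwarz in the form $\dbE[e^{2\epsilon N_1^2+2N_1}]\le\dbE[e^{4\epsilon N_1^2}]^{1/2}\dbE[e^{4N_1}]^{1/2}$ reduce everything to the Gaussian bound on $\sup_t|X^u(t)|$.

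An alternative repair, possibly with a different admissible range of $\epsilon_1$, uses the flow of $X^u$ and the boundedness of $\sigma$. One shows $\log\dbE_t[\exp\{c\sup_{s\in[t,T]}|X^u(s)|\}]\le C+c'|X^u(t)|$ with deterministic constants. This yields the pathwise bound $|Y^u(t)|\le C(1+\sup_{s\le T}|X^u(s)|)$.

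Without one of these steps the first inequality in \eqref{small} is not proved, and your claim that the threshold on $\epsilon_1$ matches is unsupported. Note also that the two-sided log-exponential bound on $|Y^u|$ makes your separate comparison under a Girsanov measure unnecessary, along with the change of measure back to $\dbP$ it would require.

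\textbf{What is fine.} Your forward estimate is essentially the paper's argument: pathwise Gronwall plus Gaussian tails of $\int_0^\cdot\sigma\,dW$. Your treatment of $Z^u$ is a correct, self-contained replacement for the paper's citation of Fan--Hu--Tang, Proposition 2. You integrate the BSDE against \eqref{2.6}, write $e^{aM}=\mathscr{E}(2aM)^{1/2}e^{a^2\langle M\rangle}$, apply H\"older, absorb after localization, and pass to the limit by Fatou. That argument works for every $\epsilon_2<\gamma^2/8$, hence covers $\epsilon_2\le\gamma^2/18$.
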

\begin{proof}
On one hand, by Briand and Hu \cite [page 563]{Briand and Hu}, we get the first part of \eqref{well-posed x}. Thanks to a similar deduction of Briand and Hu \cite [page 563]{Briand and Hu}, we deduce that
\begin{align*}
&\dbE\Big[\mathop{\sup}_{t\in[0, T]}\exp\{\epsilon_0 |X^u(t)|^2\}\Big] \leq K \dbE\Big[\mathop{\sup}_{t\in[0, T]}\exp\Big\{3e^{2\|b_x\|_\i T} \epsilon_0 \Big|\int_{0}^{t}\sigma\big(s, u(s)\big)dW(s)\Big|^2\Big\}\Big]\\
 &\leq  K \dbE\Big[\mathop{\sup}_{t\in[0, \|\sigma\|^2_\i T]}\exp\Big\{3e^{2\|b_x\|_\i T} \epsilon_0 |W(t)|^2\Big\}\Big]  < \i,
\end{align*}
 which implies the second part of \eqref{well-posed x}.
On the other hand, note that the BSDE in \eqref{system} can be rewritten to 
\begin{align*}
- Y^u(t) = -h\big(X^u(T)\big)  + \int_{t}^{T} f\big(s, X^u(s), -\big(-Y^u(s)\big), Z^u(s), u(s)\big)ds - \int_{t}^{T} Z^u(s)^\top dW(s)
\end{align*}
and
it is easy to check that
\begin{align*}
|f(t, x, -y, z, u)| \leq \|f_y\|_\i |-y| + 2L_3 |z|^2 + L_2+L_3 + \|f_x\|_\i |x|.
\end{align*}
Then, thanks to \autoref{A1} and \eqref{well-posed x}, we deduce that for $\beta' > 1$,
\begin{align*}
&\dbE\Big[\exp\Big\{ \beta' \Big(|h(X^u(T))| + \int_{0}^{T}\{L_2 +L_3+ \|f_x\|_\i |X^u(s)|\}ds\Big)\Big\}\Big] \\
&\leq \dbE\Big[\exp\Big\{ \beta' \Big(|h(0)| + \|h_x\|_\i|X^u(T)| + \int_{0}^{T}\big\{L_2+L_3  + \|f_x\|_\i |X^u(t)|\big\}dt\Big)\Big\}\Big]< \i.
\end{align*}
It then follows from \autoref{A1} and Briand and Hu \cite[Corollary 4]{Briand and Hu} that the BSDE in \eqref{system} admits a unique solution $(Y^u, Z^u) \in \mathcal{E}(0, T) \times \mathcal{M}(0, T)$  and the following estimate holds: for $\dbP$-a.s.,
\begin{align*}
\forall t \in [0, T], \q |Y^u(t)| \leq  \frac{1}{4L_3}\log \dbE_t\Big[\exp \Big\{4L_3 e^{\|f_y\|_\i T}\Big(|h(X^u(T))| +  \int_{0}^{T}\{L_2 +L_3+ \|f_x\|_\i |X^u(s)|\}ds\Big)\Big\}\Big].
\end{align*}
Let us set
\begin{align*}
N_1 := 4L_3 e^{\|f_y\|_\i T}\Big(|h(X^u(T))| +  \int_{0}^{T}\{L_2 +L_3+ \|f_x\|_\i |X^u(s)|\}ds\Big).
\end{align*}
Then, we deduce that
\begin{align}\label{mn}
&\dbE\Big[\mathop{\sup_{0 \leq s \leq T}}\exp\Big\{16L_3^2 \epsilon'_1 |Y^u(s)|^2\Big\} \Big]
\leq \dbE\Big[\mathop{\sup_{0 \leq t \leq T}}\exp\Big\{ \epsilon'_1\log^2 \dbE_t\big[\exp\{N_1\}\big] \Big\} \Big],
\end{align}
where $\epsilon'_1$ is a small constant and will be determined later.
Define 
$
\psi(x):= xe^{\epsilon'_1 \log^2 x}, x\geq1
$
and $A(t):= \dbE_t[e^{N_1}]\geq 1$.
Then, one can check that $\psi''(x)> 0, x\geq1$. 
It then follows from conditional Jensen's inequality that 
\begin{align}\label{nn}
\exp\Big\{ \epsilon'_1\log^2 \dbE_t\big[\exp\{N_1\}\big]\Big\}  = e^ {\epsilon'_1\log^2 A(t)}  \leq \psi(A(t))= \psi\big(\dbE_t[e^{N_1}]\big) \leq \dbE_t\big[\psi(e^{N_1})\big] = \dbE_t\big[e^{\e'_1N_1^2 + N_1}\big].
\end{align}
Furthermore, in view of \eqref{mn} and \eqref{nn}, by Doob's inequality and the fact that $y \leq y^2+1, y>1$, we obtain that for $0 < \epsilon'_1 < \frac{1}{1536\|\sigma\|^2_\i L_3^2 Te^{2\|f_y\|_\i T + 2\|b_x\|_\i T}(\|h_x\|^2_\i + \|f_x\|^2_\i T^2)}$,
\begin{align*}
&\dbE\Big[\mathop{\sup_{0 \leq t \leq T}}\exp\Big\{16L_3^2 \epsilon'_1 |Y^u(t)|^2\Big\} \Big] \leq \dbE\Big[\mathop{\sup_{0 \leq t \leq T}}\dbE_t\big[e^{\e'_1N_1^2 + N_1}\big] \Big] \leq \dbE\Big[\mathop{\sup_{0 \leq t \leq T}}\Big(\dbE_t\big[e^{\e'_1N_1^2 + N_1}\big]\Big)^2  \Big] + 1\\
&
 \leq 4\dbE\big[e^{2\e'_1N_1^2 + 2N_1}\big] +1 \leq 4\dbE\big[e^{4\e'_1N_1^2}\big] ^{\frac{1}{2}}\cd\dbE\big[e^{4N_1}\big] ^{\frac{1}{2}}+1 < \i,
\end{align*}
which yields the first inequality in \eqref{small}.
Finally, by \eqref{2.6} in \autoref{A1}, \eqref{well-posed x}, and the fact that $(Y^u, Z^u) \in \mathcal{E}(\dbP) \times \mathcal{M}(\dbP)$, we can verify that the BSDE in \eqref{system} satisfies the conditions of Fan, Hu, and Tang \cite [Proposition 2]{Fan S}. Therefore, we deduce that the right inequality of \eqref{small} holds.
\end{proof}

\begin{remark}
In view of \eqref{b} and \eqref{condition}, it is easy to check that $C_1, C_2, C_3, C_4$ are independent of the control term $u$.
\end{remark}
Finally, we present the Lebesgue differential theorem, which can be found in Stein--Shakarchi \cite[page 104]{stein2009real}.
\begin{proposition}\label{Lebes}
If $l: [0, T] \rightarrow \dbR$ is integrable on $[0, T]$, then we have 
\begin{align*}
\lim\limits_{\e \rightarrow 0 } \frac{1}{\e}\int_{t'}^{t'+\e}l(t)dt = l(t'), \q \hb{for a.e. $t'$}.
\end{align*}
\end{proposition}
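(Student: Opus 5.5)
This is the classical Lebesgue differentiation theorem in one dimension, and I would prove it by approximation plus a maximal inequality. First, if $l$ is continuous on $[0,T]$ (extended continuously, say by a constant, to the right of $T$ so that $\int_{t'}^{t'+\e}$ makes sense for small $\e$), the claim holds at every $t'$. Indeed,
\begin{align*}
\Big|\frac{1}{\e}\int_{t'}^{t'+\e}l(t)dt - l(t')\Big| \leq \sup_{|t-t'|\leq \e}|l(t)-l(t')|,
\end{align*}
which tends to $0$ by uniform continuity. Since continuous functions are dense in $L^1([0,T])$, for every $\eta>0$ we can write $l = g + r$ with $g$ continuous and $\int_0^T |r(t)|dt < \eta$.

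Next, I introduce the one-sided Hardy--Littlewood maximal function
\begin{align*}
Mr(t') = \sup_{\e>0}\frac{1}{\e}\int_{t'}^{t'+\e}|r(t)|dt,
\end{align*}
with $r$ extended by zero outside $[0,T]$. The key step is the weak-type $(1,1)$ estimate
\begin{align*}
|\{t' : Mr(t')>\alpha\}| \leq \frac{C}{\alpha}\int_0^T|r(t)|dt.
\end{align*}
I expect this estimate to be the only nontrivial ingredient and the main obstacle. I would prove it with the Vitali covering lemma. Each point of a compact subset $K$ of the level set lies in an interval $I$ with $\int_I|r| > \alpha|I|$. Finitely many such intervals cover $K$. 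From these I extract a disjoint subfamily whose threefold enlargements still cover $K$, which gives $|K| \leq 3\alpha^{-1}\int_0^T |r(t)|dt$. Alternatively, in one dimension the rising sun lemma gives the constant $1$ directly.

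Finally, I define $\Lambda(t') = \limsup_{\e\to0}\big|\frac{1}{\e}\int_{t'}^{t'+\e}l(t)dt - l(t')\big|$. Since $g$ contributes nothing in the limit, $\Lambda(t') \leq Mr(t') + |r(t')|$. Chebyshev's inequality bounds $|\{|r|>\alpha\}|$ by $\eta/\alpha$, so $|\{\Lambda > 2\alpha\}| \leq (C+1)\eta/\alpha$. Letting $\eta\to0$ gives $|\{\Lambda>2\alpha\}|=0$ for every $\alpha>0$. Taking a countable union over $\alpha = 1/m$ shows that $\Lambda=0$ almost everywhere, which is exactly the stated convergence for a.e. $t'\in[0,T)$.
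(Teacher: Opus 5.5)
Your proof is correct and follows essentially the argument the paper relies on. The paper gives no proof of its own and cites Stein--Shakarchi (p.~104), where the Lebesgue differentiation theorem is proved in the same way: approximation by continuous functions, the Vitali-based weak-type $(1,1)$ maximal inequality for the remainder, and Chebyshev's inequality.

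Two details need tidying. First, your intervals $[t',t'+\e]$ have $t'$ as an endpoint, so compactness alone does not give a finite subcover of $K$. Enlarge each interval slightly to the left first, or note that your one-sided maximal function is dominated by the uncentered maximal function over open intervals. Second, your argument gives the one-sided limit $\e\downarrow 0$, which is exactly how the paper uses the result for spike variations.
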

\begin{remark}
Since for each $u\in \mathcal{U}_{ad}$, \autoref{Lebes} is applied only finitely many times in this paper, and a finite union of null sets is still a null set, we omit explicit mention of these exceptional null sets in subsequent applications of \autoref{Lebes}.
\end{remark}

%
\section{A new probability measure and the first- and second-order variational equations for $X^\e-X^*$} \label{section3}
In this section, we introduce a new probability measure and the first- and second-order variational equations for $X^\e-X^*$ under this measure.
For simplicity, the positive constant $K$ may vary from line to line throughout the remainder of this paper.

Let $u^*(\cdot)$ be an optimal control and $(X^*(\cdot),Y^*(\cdot),Z^*(\cdot))$ be the corresponding state trajectories of \eqref{system}. 
Since the control domain $U$ is not necessarily convex, we will use the spike variation method. 
For any $u(\cdot) \in \mathcal{U}_{ad}$, $0 < \e< T-t_0$, and any fixed $t_0 \in [0, T)$, define $u^\e(\cd)$ as follows:
\begin{equation}\label{}u^\e(t)\triangleq \left\{\begin{aligned}
&u^*(t), \q t  \notin [t_0, t_0 + \e];\\
&u(t), \q t \in [t_0, t_0+\e],
\end{aligned}\right.\end{equation}
which is a perturbed admissible control.
In addition, we let $(X^\e(\cdot), Y^\e(\cdot), Z^\e(\cdot))$ denote the state trajectories of \eqref{system} associated with $u^\e (\cdot)$. 
For simplicity,
for $\phi = \{h, f, b, \sigma^i, i=1,..., d \}$ and $\omega = \{x, y,  z,  xx, xy, xz, yy, yz, zz\}$,  we denote
\begin{equation}
\begin{aligned}\label{notation}
&\phi(t) \triangleq \phi(t, X^*(t), Y^*(t), Z^*(t), u^*(t)), \q
\phi_{\omega}(t) \triangleq \phi_{\omega}(t, X^*(t), Y^*(t),  Z^*(t), u^*(t)),\\
&\delta \phi (t) \triangleq  \phi(t, X^*(t), Y^*(t),  Z^*(t),u(t)) - \phi(t), \q
\delta \phi_\omega (t)\triangleq \phi_\omega(t, X^*(t), Y^*(t), Z^*(t),u(t)) - \phi_\omega(t).
\end{aligned}\end{equation}

The first- and second-order variational equations for the SDE in \eqref{system} are
\begin{equation}\label{variational-x1}\left\{\begin{aligned}
dX_1(t) = &b_{x}(t)X_1(t)  dt + \sum_{i=1}^{d}  \delta \sigma^i(t) 1_{[t_0, t_0 + \e]}(t)dW^i(t), \quad t\in [0,T],\\
X_1(0) =&0
\end{aligned}\right.\end{equation}
and
\begin{equation}\label{variational-x2}\left\{\begin{aligned}
dX_2(t) =& \big[b_{x}(t)X_2(t) + \delta b(t) 1_{[t_0, t_0 + \e]} (t)+ \frac{1}{2} b_{xx}(t) X_1(t) X_1(t) \big]dt, 
\quad t\in [0,T], \\
X_2(0) =&0,
\end{aligned}\right.\end{equation}
respectively, where 
$$b_{xx}(t)X_1(t) X_1(t) \triangleq \Big(\tr \Big\{b^{1}_{xx}(t)X_1(t)X_1(t)^\top \Big\}, ..., \tr \Big\{b^{n}_{xx}(t)X_1(t)X_1(t)^\top\Big\}\Big)^\top.$$

\subsection{A new probability measure and its property}
In this subsection, we introduce a new probability measure and give its property.

Due to the lack of the BMO martingale technique, and
inspired by Delbaen, Hu, and Richou \cite {delbaen2011uniqueness}, we analyze it under a new measure.
Define 
\begin{align}\label{qd}
q^*(t) : = f_z(t), \q \frac{d{\dbQ^*}}{d\dbP} := \mathscr{E}\Big(\int_{0}^{\cd}q^*(s)^\top dW(s)\Big)_T , \q dW^{q^*}(t) := dW(t) - q^*(t)dt.
\end{align}
Then,
we have the following result.
\begin{proposition}\label{prop3.2}
Let \autoref{A1} hold. Then, $\dbQ^*$ is a probability measure, $W^{q^*}$ is a Brownian motion under $\dbQ^*$, and $\dbE^{\dbQ^*}\Big[\int_{0}^{T}|q^*(t)|^2dt\Big] < \i$.
\end{proposition}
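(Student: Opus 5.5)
Our plan is to show that the stochastic exponential $\mathcal{D}_t:=\mathscr{E}\big(\int_0^\cd q^*(s)^\top dW(s)\big)_t$ is a true martingale on $[0,T]$. Then $\dbQ^*$ is a probability measure, Girsanov's theorem shows that $W^{q^*}$ is a $\dbQ^*$-Brownian motion, and the integrability of $q^*$ comes out of the same argument. Since $|q^*(t)|=|f_z(t)|\leq L_3(1+|Z^*(t)|)$, the key input is the exponential integrability $\dbE\big[\exp\{\epsilon_2\int_0^T|Z^*(t)|^2dt\}\big]<\i$ from \eqref{small} in \autoref{system-wellposedness}, which gives $\dbE\big[\exp\{\delta\int_0^T|q^*(t)|^2dt\}\big]<\i$ for some small $\delta>0$. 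This is weaker than Novikov's condition, so a localization argument is needed. Following Delbaen, Hu, and Richou \cite{delbaen2011uniqueness}, we would use convex duality together with the convexity of $f$ in $z$.

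\textbf{Step 1 (localization and entropy bound).} Let $\tau_n:=\inf\{t:\int_0^t|q^*|^2ds\geq n\}\wedge T$. Under $\dbQ^n:=\mathcal{D}_{\tau_n}\cdot\dbP$, which is a genuine probability measure by Novikov's condition, the process $W^n_t:=W_t-\int_0^{t\wedge\tau_n}q^*ds$ is a Brownian motion. We need a bound on the relative entropy
\[
H(\dbQ^n|\dbP)=\frac12\dbE^{\dbQ^n}\Big[\int_0^{\tau_n}|q^*|^2ds\Big]
\]
that is uniform in $n$. To get it, rewrite the BSDE for $Y^*$ on $[0,\tau_n]$ under $\dbQ^n$:
\[
Y^*(0)=Y^*(\tau_n)+\int_0^{\tau_n}\big(-f(s)+f_z(s)^\top Z^*(s)\big)ds-\int_0^{\tau_n}Z^{*\top}dW^n .
\]
Take $z'=Z^*$ and $z=0$ in \eqref{2.5}. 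This gives
\[
f_z(s)^\top Z^*(s)-f(s)\geq \frac{\gamma_1}{2}|Z^*|^2-f(s,X^*,Y^*,0,u^*),
\]
and the last term is bounded by $L_2+\|f_x\|_\i|X^*|+\|f_y\|_\i|Y^*|$. Hence
\[
\frac{\gamma_1}{2}\dbE^{\dbQ^n}\Big[\int_0^{\tau_n}|Z^*|^2ds\Big]\leq |Y^*(0)|+\dbE^{\dbQ^n}\Big[\sup_t|Y^*(t)|+\int_0^T(L_2+K|X^*|+K|Y^*|)ds\Big],
\]
once the stochastic integral is shown to have zero $\dbQ^n$-expectation. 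That follows by a further localization, or from $Z^*\in\mathcal{M}(\dbP)$ combined with the fact that $\mathcal{D}_{\tau_n}$ lies in every $L^p$.

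\textbf{Step 2 (Young/entropy duality).} The right-hand side is bounded using the duality inequality $\dbE^{\dbQ^n}[\xi]\leq H(\dbQ^n|\dbP)+\log\dbE[e^{\xi}]$, applied to $\xi=\eta\big(\sup_t|Y^*|+\int_0^T(|X^*|+|Y^*|)ds\big)$ for a small constant $\eta$. The quantity $\log\dbE[e^{\xi}]$ is finite for every $\eta$ by \eqref{well-posed x} and \eqref{small}. We also have $|q^*|^2\leq 2L_3^2(1+|Z^*|^2)$, so $H(\dbQ^n|\dbP)\leq K+L_3^2\dbE^{\dbQ^n}\int_0^{\tau_n}|Z^*|^2ds$. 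Choosing $\eta$ so that the entropy term can be absorbed on the left gives $\sup_nH(\dbQ^n|\dbP)<\i$.

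\textbf{Step 3 (conclusion).} A uniform entropy bound means $\{\mathcal{D}_{\tau_n}\}$ is bounded in $L\log L$, so the family is uniformly integrable by de la Vallée-Poussin. Since $\mathcal{D}_{\tau_n}\to\mathcal{D}_T$ almost surely, we get $\dbE[\mathcal{D}_T]=1$. Therefore $\dbQ^*$ is a probability measure, and Girsanov's theorem gives that $W^{q^*}$ is a $\dbQ^*$-Brownian motion. Finally, Fatou's lemma or monotone convergence, together with $\mathcal{D}_{\tau_n}=\dbE_{\tau_n}[\mathcal{D}_T]$, yields
\[
\dbE^{\dbQ^*}\Big[\int_0^T|q^*|^2dt\Big]\leq\liminf_n 2H(\dbQ^n|\dbP)<\i .
\]

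The main obstacle is the absorption in Step 2. The coefficient in front of the entropy term depends on $L_3$ and $\gamma_1$, so $\eta$ must be chosen small enough to make the absorption work. This is possible only because \eqref{small} and \eqref{well-posed x} hold for arbitrary $\beta$ in the linear-exponential moments, so every choice of $\eta$ keeps $\log\dbE[e^{\xi}]$ finite.
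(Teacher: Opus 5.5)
Your proposal is correct and is essentially a self-contained write-up of the Delbaen--Hu--Richou localization, relative-entropy and de la Vall\'ee-Poussin argument that the paper invokes only by citation. The one difference is that you get coercivity from \eqref{2.5} rather than from the Fenchel bound $g^*\geq |q|^2/(8L_3)-\cdots$, and this makes no difference to the conclusion.

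One slip to fix: with $\xi=\eta\zeta$, the duality inequality gives $\dbE^{\dbQ^n}[\zeta]\leq \eta^{-1}\big(H(\dbQ^n|\dbP)+\log\dbE[e^{\eta\zeta}]\big)$, so the absorption in Step 2 requires $\eta$ \emph{large}, not small. This is harmless, precisely because $\sup_t|X^*|$ and $\sup_t|Y^*|$ have exponential moments of every order.
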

\begin{proof}
Let $f^*(t, y, z) = f\big(t, X^*(t), y, z, u^*(t)\big)$. Then, we have 
\begin{align*}
Y^{*}(t) = h\big(X^{*}(T)\big) - \int_{t}^{T} f^*\big(s, Y^{*}(s), Z^{*}(s)\big)ds + \int_{t}^{T}Z^{*}(s)^\top dW(s).
\end{align*}
Thanks to the well-posedness of $X^*$ in \autoref{system-wellposedness}, one can verify that condition (3.1) in Delbaen, Hu, and Richou \cite{delbaen2011uniqueness}, specialized to the case $N=1$, is satisfied.
Then, in view of  the proof of Delbaen, Hu, and Richou \cite [Theorem 3.3 and Lemma 3.4]{delbaen2011uniqueness},
we obtain that 
$\Big\{\mathscr{E}\Big(\int_{0}^{t}f^*_z\big(s,  Y^{*}(s),  Z^{*}(s) \big)^\top dW(s)\Big)\Big\}_{0\leq t \leq T} $
is a martingale. It is easy to check that 
$
f^*_z\big(s,  Y^{*}(s),  Z^{*}(s) \big)  = f_z(s).
$ 
It then follows from Girsanov theorem that $\dbQ^*$ is a probability measure, $W^{q^*}$ is a Brownian motion under $\dbQ^*$.
In addition, in view of equation (3.3) in Delbaen, Hu, and Richou \cite {delbaen2011uniqueness}, we also have $\dbE^{\dbQ^*}\Big[\int_{0}^{T}|q^*(s)|^2ds\Big] < \i$.
Therefore, we obtain the desired result.
\end{proof}
The following result shows that $q^* \in L^4_{\dbF}(\dbQ^*)$.
\begin{proposition}\label{q*}
Let \autoref{A1} hold. Then, we have
\begin{align}
\dbE^{\dbQ^*}\Big[\Big(\int_{0}^{T}|q^*(s)|^2ds\Big)^2\Big] < \i.
\end{align}
\end{proposition}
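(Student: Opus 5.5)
Since $|q^*(s)|=|f_z(s)|\le L_3(1+|Z^*(s)|)$, it suffices to show $\dbE^{\dbQ^*}\big[(\int_0^T|Z^*(s)|^2ds)^2\big]<\i$. The plan is to rewrite the BSDE for $Y^*$ under $\dbQ^*$ and use the strong convexity \eqref{2.5} to isolate $|Z^*|^2$. Since $dW=dW^{q^*}+q^*dt$, we have
\begin{align*}
dY^*(t)=\big[f(t)-f_z(t)^\top Z^*(t)\big]dt-Z^*(t)^\top dW^{q^*}(t).
\end{align*}
Apply \eqref{2.5} with $z=0$, $z'=Z^*(t)$. This gives $f(t)-f_z(t)^\top Z^*(t)\le f(t,X^*(t),Y^*(t),0,u^*(t))-\frac{\gamma_1}{2}|Z^*(t)|^2$. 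Integrating over $[0,T]$ then yields
\begin{align*}
\frac{\gamma_1}{2}\int_0^T|Z^*(s)|^2ds\le Y^*(0)-h(X^*(T))+\int_0^T\big|f(s,X^*(s),Y^*(s),0,u^*(s))\big|ds-\int_0^T Z^*(s)^\top dW^{q^*}(s).
\end{align*}
By Assumption~\ref{A1}, $|f(s,x,y,0,u)|\le L_2+\|f_x\|_\i|x|+\|f_y\|_\i|y|$. Hence the right-hand side is bounded by $K\big(1+\sup_t|X^*(t)|+\sup_t|Y^*(t)|\big)$ plus the stochastic integral.

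Next, square both sides and take $\dbQ^*$-expectations, after localizing with stopping times $\tau_n$ so that the stochastic integral is a true $\dbQ^*$-martingale. The martingale term is handled by BDG under $\dbQ^*$ (or simply the Itô isometry):
\begin{align*}
\dbE^{\dbQ^*}\Big[\Big(\int_0^{\tau_n}Z^{*\top}dW^{q^*}\Big)^2\Big]=\dbE^{\dbQ^*}\Big[\int_0^{\tau_n}|Z^*|^2ds\Big]\le \delta\,\dbE^{\dbQ^*}\Big[\Big(\int_0^{\tau_n}|Z^*|^2ds\Big)^2\Big]+\frac{1}{4\delta}.
\end{align*}
Choosing $\delta$ small absorbs this into the left-hand side, and Fatou's lemma removes the localization. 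Absorption requires the localized left-hand side to be finite, so one should take $\tau_n=\inf\{t:\int_0^t|Z^*|^2ds\ge n\}\wedge T$. It then suffices to show $\dbE^{\dbQ^*}[\sup_t|X^*(t)|^2+\sup_t|Y^*(t)|^2]<\i$. Alternatively, one can use Proposition~\ref{prop3.2}, which already gives $\dbE^{\dbQ^*}\int_0^T|Z^*|^2ds<\i$.

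The main obstacle is controlling moments of $X^*,Y^*$ under $\dbQ^*$, since $d\dbQ^*/d\dbP=:\mathcal{E}$ is only known to be in $L\log L$, not in $L^p$. I would use the Young/entropy inequality $xy\le x\log x - x + e^{y}$ (for $x> 0$). This gives, for any nonnegative $\xi$ and $\lambda>0$,
\begin{align*}
\dbE^{\dbQ^*}[\xi]=\dbE[\mathcal{E}\xi]\le \frac{1}{\lambda}\Big(\dbE[\mathcal{E}\log\mathcal{E}]+\dbE[e^{\lambda\xi}]\Big).
\end{align*}
Apply it with $\xi=\sup_t|Y^*(t)|^2$ and $\lambda=\epsilon_1$ from Lemma~\ref{system-wellposedness}, and with $\xi=\sup_t|X^*(t)|^2$ and $\lambda=\epsilon_0$. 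Both $\dbE[e^{\lambda\xi}]$ terms are finite by \eqref{well-posed x} and \eqref{small}; this is exactly why the squared-exponential integrability was established there. The entropy $\dbE[\mathcal{E}\log\mathcal{E}]$ is finite because $\log\mathcal{E}=\int_0^T q^{*\top}dW^{q^*}+\frac12\int_0^T|q^*|^2ds$. Its $\dbQ^*$-expectation is therefore $\frac12\dbE^{\dbQ^*}\int_0^T|q^*|^2ds<\i$ by Proposition~\ref{prop3.2}, after a localization argument to kill the stochastic integral term. Together these give $\dbE^{\dbQ^*}[(\int_0^T|Z^*|^2ds)^2]<\i$, and the statement follows from $|q^*|^2\le 2L_3^2(1+|Z^*|^2)$.
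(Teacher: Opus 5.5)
Your proof is correct and shares the paper's skeleton, but its key lower bound is different. The shared part is the pathwise bound from the BSDE written under $\dbQ^*$, squaring, and the entropy inequality (writing $\mathcal{E}=d\dbQ^*/d\dbP$ as you do), with $\dbE[\mathcal{E}\log\mathcal{E}]=\frac12\dbE^{\dbQ^*}[\int_0^T|q^*|^2ds]$, to move the moments of $\sup_t|X^*|^2$ and $\sup_t|Y^*|^2$ back to $\dbP$. Your $\dbQ^*$-form of the BSDE is exactly the Delbaen--Hu--Richou representation the paper uses, since $g^*(s,Y^*(s),q^*(s))=q^*(s)^\top Z^*(s)-f(s)$, the supremum being attained at $z=Z^*(s)$.

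The paper bounds this drift below by $\frac{1}{8L_3}|q^*|^2$ minus linear terms, using only the quadratic upper growth of $f$. It therefore controls $\int_0^T|q^*|^2ds$ directly, without strong convexity. The martingale term then needs a separate bound on $\dbE^{\dbQ^*}[\int_0^T|Z^*|^2ds]$, which the paper takes from the $\dbP$-exponential moment of $\int_0^T|Z^*|^2ds$ in \eqref{small} (the Fan--Hu--Tang input resting on \eqref{2.6}).

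You instead use \eqref{2.5} to put $\frac{\gamma_1}{2}\int_0^T|Z^*|^2ds$ on the left, so the quadratic variation of the martingale term is absorbed after localizing at $\tau_n$. This makes the $Z$-part of \autoref{system-wellposedness} unnecessary. The cost is that you invoke strong convexity and must return to $q^*$ through $|f_z|\le L_3(1+|z|)$.

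Two small points. First, the entropy step needs no localization, since $\int_0^\cdot q^{*\top}dW^{q^*}$ is already an $L^2(\dbQ^*)$-martingale by \autoref{prop3.2}. Second, your ``alternative'' is not literally what \autoref{prop3.2} says. Passing from $q^*$ to $Z^*$ needs the strong monotonicity $(f_z(z)-f_z(z'))^\top(z-z')\ge\gamma_1|z-z'|^2$, obtained by symmetrizing \eqref{2.5}; this yields $|Z^*|\le(|q^*|+L_3)/\gamma_1$.
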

\begin{proof}
Recall that $f^*(t, y, z) = f\big(t, X^*(t), y, z, u^*(t)\big)$ and
define
\begin{align*}
g^*(t, y, q) = \mathop{\sup}_{z\in \dbR^{d}}\Big(z^\top q - f^*(t, y, z)\Big).
\end{align*}
One can check that
\begin{align*}
f(t, x, y, z, u) \leq L_2 + L_3 + \|f_x\|_\i |x| + \|f_y\|_\i |y| + 2L_3|z|^2,
\end{align*}
which implies that 
\begin{align}\label{ge}
g^*(t, y, q) \geq -\Big(L_2 + L_3 + \|f_x\|_\i |X^*(t)|\Big) - \|f_y\|_\i |y| + \frac{|q|^2}{8L_3}.
\end{align}
Then, according to \eqref{system-wellposedness} and the proof of Delbaen, Hu, and Richou \cite [Theorem 3.3]{delbaen2011uniqueness}, we have that 
\begin{align*}
&Y^{*}(0) = h\big(X^{*}(T)\big) + \int_{0}^{T} g^*\big(s, Y^{*}(s), q^*(s)\big)ds + \int_{0}^{T}Z^{*}(s)^\top dW^{q^*}(s)\\
&\geq -\|h_x\|_\i |X^*(T)| - |h(0)|+ \frac{1}{8L_3} \int_{0}^{T}|q^*(s)|^2ds -\|f_y\|_\i \int_{0}^{T}|Y^{*}(s)|ds \\
&\q- \int_{0}^{T}\Big\{L_2+L_3 + \|f_x\|_\i |X^*(s)|\Big\}ds + \int_{0}^{T}Z^{*}(s)^\top dW^{q^*}(s).
\end{align*}
It then follows from Burkholder–Davis–Gundy inequality, \autoref{system-wellposedness}, \autoref{prop3.2}, and the fact that $xy\leq e^{\beta x} + \frac{y}{\beta}(\log y - \log \beta -1), (x,y,\beta) \in \dbR\times \dbR^{+}\times \dbR^{+}$ that for sufficiently small $\kappa>0$,
\begin{align*}
&\dbE^{\dbQ^*}\Big[\Big(\int_{0}^{T}|q^*(s)|^2ds\Big)^2\Big] \\
&\leq K\bigg\{1+ \dbE^{\dbQ^*}\Big[\mathop{\sup}_{t\in[0, T]}|X^*(t)|^2\Big] + \dbE^{\dbQ^*}\Big[\mathop{\sup}_{t\in[0, T]}|Y^*(t)|^2\Big] + \dbE^{\dbQ^*}\Big[ \int_{0}^{T}|Z^{*}(s)|^2ds\Big]\bigg\} \\
&\leq K\bigg\{1+\dbE^{\dbQ^*}\Big[\int_{0}^{T}|q^*(s)|^2ds\Big]+ \dbE\Big[\exp\Big\{\mathop{\sup}_{t\in[0, T]}\kappa|X^*(t)|^2\Big\}\Big] + \dbE\Big[\exp\Big\{\mathop{\sup}_{t\in[0, T]}\kappa|Y^*(t)|^2\Big\}\Big] \\
&\q\q\q+ \dbE\Big[\exp\Big\{\kappa \int_{0}^{T}|Z^{*}(s)|^2ds\Big\}\Big]\bigg\}<\i,
\end{align*}
which implies the desired result.
\end{proof}

\subsection{The estimates for $X^\e - X^*$ under probability measure $\dbQ^*$}
In this subsection, we give the estimates for $X^\e - X^*$ under probability measure $\dbQ^*$.
Recall that $X_1$ and $X_2$ are defined in \eqref{variational-x1} and \eqref{variational-x2}, respectively.
Set $\hat{X}^\e(t) = X^\e(t) - X^*(t)$, $\hat{X}^\e_1(t) := X^\e(t) - X^*(t) - X_1(t)$, and $\hat{X}^\e_2(t) := \hat{X}^\e_1(t) - X_2(t)$. In the following, we  give the estimate of $\hat{X}^\e$, $X_1$, $\hat{X}^\e_1$, $X_2$, and $\hat{X}^\e_2$ under probability measure $\dbQ^*$.
\begin{proposition}\label{state}
Let \autoref{A1} hold. Then, we have 
\begin{align*}
&(i)\q \dbE^{\dbQ^*}\Big[\mathop{\sup}_{t\in [0, T]}|X^*(t)|^4\Big] < \i, \q
(ii)\q\dbE^{\dbQ^*}\Big[\mathop{\sup}_{t\in [0, T]}|\hat{X}^\e(t)|^4\Big] = O(\e^2), \\
&(iii)\q\dbE^{\dbQ^*}\Big[\mathop{\sup}_{t\in [0, T]}|X_1(t)|^4\Big] = O(\e^2),\q
(iv)\q\dbE^{\dbQ^*}\Big[\mathop{\sup}_{t\in [0, T]}|\hat{X}_1^\e(t)|^2\Big] = O(\e^2),\\
&(v)\q\dbE^{\dbQ^*}\Big[\mathop{\sup}_{t\in [0, T]}|X_2(t)|^2\Big] = O(\e^2),\q
(vi)\q\dbE^{\dbQ^*}\Big[\mathop{\sup}_{t\in [0, T]}|\hat{X}_2^\e(t)|\Big] = o(\e).
\end{align*}
\end{proposition}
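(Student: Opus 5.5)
The plan is to move every estimate from $\dbQ^*$ back to $\dbP$, using the fact that under $\dbQ^*$ the forward equations gain a drift $\sigma q^*$ from $dW = dW^{q^*} + q^*dt$. I see two routes for this.

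\emph{Route (a): the Young-type inequality.} In the proof of \autoref{q*} we used $xy\leq e^{\beta x}+\frac{y}{\beta}(\log y-\log\beta-1)$. Combined with the $L\log L$ property of $\frac{d\dbQ^*}{d\dbP}$ (\autoref{prop3.2}), it gives $\dbE^{\dbQ^*}[\xi]\leq K(1+\dbE[e^{\kappa\xi}])$ for nonnegative $\xi$ and small $\kappa$.

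\emph{Route (b): BDG under $\dbQ^*$ plus Gronwall.} We rewrite each equation under $\dbQ^*$ and treat the drift term $\sigma q^*$ using $q^*\in L^4_\dbF(\dbQ^*)$ from \autoref{q*}.

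\textbf{Item (i).} I use route (a) with $\xi=\sup_t|X^*(t)|^4$. This requires $\dbE[\exp\{\kappa\sup_t|X^*|^4\}]<\infty$, which is too strong. So instead I use route (b). Under $\dbQ^*$,
\begin{align*}
dX^*=\big[b(t,X^*,u^*)+\sigma(t,u^*)q^*\big]dt+\sigma(t,u^*)dW^{q^*}.
\end{align*}
Since $b$ is Lipschitz with $|b(t,0,u)|\leq L_1$ and $\sigma$ is bounded, BDG and Gronwall give
\begin{align*}
\dbE^{\dbQ^*}\big[\sup_t|X^*|^4\big]\leq K\Big(1+\dbE^{\dbQ^*}\Big[\Big(\int_0^T|q^*|ds\Big)^4\Big]\Big)\leq K\Big(1+\dbE^{\dbQ^*}\Big[\Big(\int_0^T|q^*|^2ds\Big)^2\Big]\Big)<\infty,
\end{align*}
where the last bound is \autoref{q*}.

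\textbf{Item (iii).} Under $\dbQ^*$,
\begin{align*}
dX_1=\big[b_xX_1+\delta\sigma\,q^*1_{[t_0,t_0+\e]}\big]dt+\delta\sigma\,1_{[t_0,t_0+\e]}dW^{q^*}.
\end{align*}
The martingale part contributes $K\e^2$ to the fourth moment, since $\delta\sigma$ is bounded. The drift part contributes
\begin{align*}
\dbE^{\dbQ^*}\Big[\Big(\int_{t_0}^{t_0+\e}|q^*|ds\Big)^4\Big]\leq\e^2\,\dbE^{\dbQ^*}\Big[\Big(\int_{t_0}^{t_0+\e}|q^*|^2ds\Big)^2\Big].
\end{align*}
This is $O(\e^2)$, and in fact $o(\e^2)$ by dominated convergence using \autoref{q*}.

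\textbf{Item (ii).} The equation for $\hat X^\e$ is handled the same way. Its drift is Lipschitz in $\hat X^\e$ plus $\delta b\,1_{[t_0,t_0+\e]}$, which is bounded by $K(1+|X^*|)1_{[t_0,t_0+\e]}$. Its diffusion is $\delta\sigma 1_{[t_0,t_0+\e]}$. The $\delta b$ term costs at most $\e^4\dbE^{\dbQ^*}[\sup_t|X^*|^4]$, which is finite by (i).

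\textbf{Items (iv)--(vi).} These are deterministic-type manipulations of the standard Peng expansion, done under $\dbQ^*$.

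For (iv), $\hat X^\e_1$ satisfies
\begin{align*}
d\hat X^\e_1=\big[\tilde b_x\hat X^\e_1+(\tilde b_x-b_x)X_1+\delta b\,1_{[t_0,t_0+\e]}\big]dt,
\end{align*}
with no noise term, because $\sigma$ is independent of $x$. Here $\tilde b_x=\int_0^1 b_x(X^*+\lambda\hat X^\e)d\lambda$, so $|\tilde b_x-b_x|\leq K|\hat X^\e|$. Then
\begin{align*}
\dbE^{\dbQ^*}\big[\sup|\hat X^\e_1|^2\big]\leq K\Big(\dbE^{\dbQ^*}\big[\sup|\hat X^\e|^2|X_1|^2\big]+\e^2\Big)=O(\e^2),
\end{align*}
by Cauchy--Schwarz together with (ii) and (iii).

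For (v), $X_2$ has drift $b_xX_2+\delta b\,1_{[t_0,t_0+\e]}+\tfrac12 b_{xx}X_1X_1$. Gronwall gives the bound $K(\e^2\dbE^{\dbQ^*}[\sup|X^*|^2+1]+\dbE^{\dbQ^*}[\sup|X_1|^4])=O(\e^2)$.

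For (vi), I write the second-order Taylor remainder. The terms are:
\begin{itemize}
\item $(\delta b(X^\e)-\delta b(X^*))1_{[t_0,t_0+\e]}$, of size $K|\hat X^\e|1_{[t_0,t_0+\e]}$;
\item the difference $\tfrac12\big(\tilde b_{xx}\hat X^\e\hat X^\e-b_{xx}X_1X_1\big)$.
\end{itemize}
The second term is split into $(\tilde b_{xx}-b_{xx})\hat X^\e\hat X^\e$ plus $b_{xx}$ times terms of the form $(\hat X^\e_1+X_1)\hat X^\e_1$ and similar. The piece $(\tilde b_{xx}-b_{xx})|\hat X^\e|^2$ is $o(\e)$ by the continuity of $b_{xx}$, the uniform integrability supplied by (ii), and dominated convergence. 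All the remaining pieces are $O(\e^{3/2})$ in $L^1(\dbQ^*)$ by (ii)--(iv).

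\textbf{Main obstacle.} The key step is (i), together with the $q^*$-drift in (iii). Both depend on having a fourth moment of $\int|q^*|^2$ under $\dbQ^*$, and \autoref{q*} provides exactly this. Without it, route (a) would demand exponential moments of $|X^*|^4$, which are unavailable. Everything else reduces to routine Gronwall/BDG arguments under $\dbQ^*$ with the Brownian motion $W^{q^*}$.
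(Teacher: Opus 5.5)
Your proposal is correct and follows the paper's proof essentially step for step. You rewrite each forward equation under $\dbQ^*$ with the extra drift $\sigma q^*$ and use BDG and Gronwall together with \autoref{q*} for (i)--(iii), rightly noting that the $L\log L$ inequality alone cannot give the fourth moment of $X^*$. You then use pathwise Gronwall, Cauchy--Schwarz and a second-order Taylor expansion with dominated convergence for (iv)--(vi).

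The only slip is cosmetic: $|\tilde b_x-b_x|\le K|\hat X^\e|$ needs $\tilde b_x$ evaluated at $u^*$, so that the spike term is $\delta b$ along $X^\e$. With the paper's choice $u^\e$ you pick up an extra term $|\delta b_x|1_{[t_0,t_0+\e]}|X_1|$, which is harmless since it contributes only $O(\e^3)$.
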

\begin{proof}
Without loss of generality, we assume $n=1$. The proof will be divided into three steps.\\
{\bf Step 1: proof of (i)-(iii).} 
On one hand, by the definition of $X^*$, we have
\begin{align*}
X^*(t) = x(0) + \int_{0}^{t} \big\{b\big(s, X^*(s), u^*(s)\big) + \sigma\big(s, u^*(s)\big) q^*(s)\big\}ds + \int_{0}^{t}\sigma\big(s, u^*(s)\big)dW^{q^*}(s).
\end{align*}
It then follows from Gronwall's inequality, Burkholder–Davis–Gundy inequality,  \autoref{A1}, and \autoref{q*} that
\begin{align}\label{q}
\dbE^{\dbQ^*}\Big[\mathop{\sup}_{t\in[0, T]}|X^*(t)|^4 \Big] \leq K \bigg\{\dbE^{\dbQ^*}\Big[\Big(\int_{0}^{T}|q^*(s)|^2ds\Big)^2\Big] + |x(0)|^4  +1\bigg\} < \i.
\end{align}
On the other hand, by the definition of $\hat{X}^\e$, we have
\begin{align*}
\hat{X}^\e(t) =    \int_{0}^{t}\Big\{ \tilde{b}_x^\e(s) \hat{X}^\e(s) + \delta b(s)1_{[t_0, t_0+\e]}(s) + \delta \sigma(s)1_{[t_0, t_0+\e]}(s)q^*(s)\Big\}ds 
+ \int_{0}^{t}\delta \sigma(s)1_{[t_0, t_0+\e]}(s)dW^{q^*}(s),
\end{align*}
where
$\tilde{b}_x^\e(s):=\int_{0}^{1}b_x\big(s, X^*(s)+\theta (X^\e(s) - X^*(s)), u^\e(s)\big)d\theta$.
It then follows from
Gronwall's inequality, Burkholder–Davis–Gundy inequality,  \autoref{A1},  \eqref{q}, and \autoref{q*} that 
\begin{align*}
\dbE^{\dbQ^*}\Big[\mathop{\sup}_{s\in[0, T]}\frac{|\hat{X}^\e(s)|^4}{\e^2} \Big]
\leq &  4e^{4\|b_x\|_\i  T} \bigg\{\frac{1}{\e^2} \dbE^{\dbQ^*}\Big[\Big(\int_{0}^{T} \Big|  \delta b(s)1_{[t_0, t_0+\e]}(s) +\delta \sigma(s)1_{[t_0, t_0+\e]}(s)q^*(s) \Big|ds\Big)^4\Big] \\
&\q\q\q\q+ \frac{1}{\e^{2}} \dbE^{\dbQ^*}\Big[\mathop{\sup}_{t\in [0, T]}\Big|\int_{0}^{t}\delta \sigma(s)1_{[t_0, t_0+\e]}(s)dW^{q^*}(s)\Big|^4\Big]\bigg\}\\
\leq &  K \bigg\{ \dbE^{\dbQ^*}\Big[
\mathop{\sup}_{s\in[0, T]}|X^*(s)|^4 \Big] + \dbE^{\dbQ^*}\Big[\Big(\int_{0}^{T}|q^*(s)|^2ds\Big)^2\Big] + 1
\bigg\}\\
\leq &K\bigg\{  \dbE^{\dbQ^*}\Big[\Big(\int_{0}^{T}|q^*(s)|^2ds\Big)^2\Big] + |x(0)|^4+ 1
\bigg\} \leq K<\i.
\end{align*}
Consequently, we conclude that (i) and (ii) hold. By a similar deduction, we deduce (iii).\\
{\bf Step 2: proof of (iv) and (v).}
By the definition of $\hat{X}^\e_1$, one has that
\begin{align*}
\hat{X}^\e_1(t) = \int_{0}^{t} \Big\{ \tilde{b}^\e_x(s)\hat{X}_1^\e(s)+ \big(\tilde{b}^\e_x(s) - b_x(s)\big) X_1(s) + \delta b(s)1_{[t_0, t_0+\e]}(s)\Big\}ds.
\end{align*}
It then follows from Gronwall's inequality, \autoref{A1}, and H$\ddot{\rm o}$lder's inequality that 
\begin{align*}
&\dbE^{\dbQ^*}\Big[\mathop{\sup}_{s\in[0, T]}|\hat{X}_1^\e(s)|^2\Big] \\
&\leq Ke^{\|b_x\|_\i T} \bigg\{ \dbE^{\dbQ^*}\Big[ \Big(\int_{0}^{T}\big|\tilde{b}^\e_x(s) - b_x(s)\big|\cd|X_1(s)|ds\Big)^2\Big]  
\q+ \dbE^{\dbQ^*}\Big[ \Big|\int_{0}^{T}|\delta b(s)|1_{[t_0, t_0+\e]}(s)ds\Big|^2\Big]\bigg\}\\
&\leq K\bigg\{\dbE^{\dbQ^*}\Big[ \int_{0}^{T}\big|\hat{X}^\e(s)\big|^2|X_1(s)|^2ds\Big]  +  \dbE^{\dbQ^*}\Big[ \Big(\int_{0}^{T}|\delta b_x(s)|\cd |X_1(s)|1_{[t_0, t_0+\e]}(s)ds\Big)^2\Big]\\
& \q\q\q+ \dbE^{\dbQ^*}\Big[ \Big(\int_{0}^{T}|\delta b(s)|1_{[t_0, t_0+\e]}(s)ds\Big)^2\Big]
\bigg\}
\\
&\leq K \bigg\{\dbE^{\dbQ^*}\Big[\mathop{\sup}_{s\in[0, T]}|\hat{X}^\e(s)|^4\Big]^{\frac{1}{2}}\cd \dbE^{\dbQ^*}\Big[\mathop{\sup}_{s\in[0, T]}|X_1(s)|^4\Big]^{\frac{1}{2}} +  \dbE^{\dbQ^*}\Big[\mathop{\sup}_{s\in[0, T]}|X_1(s)|^2\Big] \e^2\\
&\q\q\q+ \dbE^{\dbQ^*}\Big[\mathop{\sup}_{s\in[0, T]}|X^*(s)|^2 +1\Big] \e^2 \bigg\} \leq K\e^2,
\end{align*}
which implies (iv) holds. Similar to the deduction of (iv), we deduce (v).\\
{\bf Step 3: proof of (vi).} 
By the definition of $\hat{X}_2^\e$, one has that
\begin{align*}
&\hat{X}^\e_2(t) = \int_{0}^{t} \bigg\{ \delta b_x(s)1_{[t_0, t_0+\e]}(s)\hat{X}^\e(s) + b_x(s)\hat{X}^\e_2(s) 
+ \frac{1}{2}\big(\tilde{b}_{xx}^\e(s) - b_{xx}(s)\big) |\hat{X}^\e(s)|^2 \\
&\q\q\q\q\q\q+ \frac{1}{2} b_{xx}(s)\Big(|\hat{X}^\e(s)|^2 - |X_1(s)|^2\Big)
\bigg\}ds,
\end{align*}
where
$\tilde{b}_{xx}^\e(s):= 2\int_{0}^{1}(1-\theta) b_{xx}\big(s, X^*(s)+\theta \hat{X}^\e(s), u^\e(s)\big)d\theta$.
It then follows from Gronwall's inequality and \autoref{A1} that
\begin{align*}
\dbE^{\dbQ^*}\Big[\mathop{\sup}_{s\in[0, T]}|\hat{X}^\e_2(s)|\Big] &\leq K \dbE^{\dbQ^*}\Big[\int_{0}^{T} \Big\{ |\delta b_x(s)|1_{[t_0, t_0+\e]}(s)|\hat{X}^\e(s)| +  \frac{1}{2}\big|\tilde{b}_{xx}^\e(s) - b_{xx}(s)\big| |\hat{X}^\e(s)|^2 \\
&\q\q\q\q\q\q+ \frac{1}{2}|\delta b_{xx}(s)|1_{[t_0, t_0+\e]}(s) |\hat{X}^\e(s)|^2 + \frac{1}{2} |b_{xx}(s)|\Big||\hat{X}^\e(s)|^2 - |X_1(s)|^2\Big|
\Big\}ds\Big].
\end{align*}
On one hand, by (i)-(v), \autoref{A1}, and H$\ddot{\rm o}$lder's inequality, we can check that
\begin{equation}\label{11}
\begin{aligned}
	&\dbE^{\dbQ^*}\Big[\int_{0}^{T} \Big\{ |\delta b_x(s)|1_{[t_0, t_0+\e]}(s)|\hat{X}^\e(s)|
	+ \frac{1}{2}|\delta b_{xx}(s)|1_{[t_0, t_0+\e]}(s) |\hat{X}^\e(s)|^2  \\
	& \q\q\q\q\q+ \frac{1}{2} |b_{xx}(s)|\Big| |\hat{X}^\e(s)|^2 - |X_1(s)|^2\Big|
	\Big\}ds\Big] = o(\e).
	\end{aligned}\end{equation}
	On the other hand, note that
	\begin{equation}
\begin{aligned}
	&\dbE^{\dbQ^*}\Big[\int_{0}^{T}\big|\tilde{b}_{xx}^\e(s) - b_{xx}(s)\big| |\hat{X}^\e(s)|^2ds\Big] \\
	&\leq K \bigg\{ \dbE^{\dbQ^*}\Big[\int_{0}^{T}\big|\tilde{b}_{xx}^\e(s) - b_{xx}(s, X^*(s), u^\e(s))\big| |\hat{X}^\e(s)|^2ds\Big] + \dbE^{\dbQ^*}\Big[\int_{t_0}^{t_0+\e}\big|\delta b_{xx}(s) \big| |\hat{X}^\e(s)|^2ds\Big]\bigg\}\\
	&\leq K \bigg\{ \dbE^{\dbQ^*}\Big[\int_{0}^{T}\big|\tilde{b}_{xx}^\e(s) - b_{xx}(s, X^*(s), u^\e(s))\big| |\hat{X}^\e(s)|^2ds\Big] + \e^{2}\bigg\}.
	\end{aligned}\end{equation}
	Thanks to \autoref{A1}, H$\ddot{\rm o}$lder's inequality, and Dominated convergence theorem, one can deduce that
	\begin{align}
\dbE^{\dbQ^*}\Big[\int_{0}^{T}\big|\tilde{b}_{xx}^\e(s) - b_{xx}(s, X^*(s), u^\e(s))\big| |\hat{X}^\e(s)|^2ds\Big] = o(\e).
\end{align}
Thus, we obtain that
\begin{align}\label{22}
\dbE^{\dbQ^*}\Big[\int_{0}^{T}\big|\tilde{b}_{xx}^\e(s) - b_{xx}(s)\big| |\hat{X}^\e(s)|^2ds\Big] = o(\e).
\end{align}
Combining \eqref{11} and \eqref{22}, we obtain (vi).

This completes the proof.
\end{proof}
\section{The estimate for $(Y^\e - Y^*, Z^\e - Z^*)$ under probability measure $\dbQ^*$}\label{section4}
In this section, we present the estimate for $Y^\e - Y^*$ under probability measure $\dbQ^*$, which is the main technical contribution and difficulty of this paper.
\begin{proposition}\label{ystate}
Let \autoref{A1} hold. Then, we have
\begin{align*}
\dbE^{\dbQ^*}\Big[\mathop{\sup}_{s\in[0, T]}|Y^\e(s)|^2\Big] + \dbE^{\dbQ^*}\Big[\mathop{\sup}_{s\in[0, T]}|Y^*(s)|^2\Big] + \dbE^{\dbQ^*}\Big[\int_{0}^{T}|Z^\e(s)|^2ds\Big] + \dbE^{\dbQ^*}\Big[\int_{0}^{T}|Z^*(s)|^2ds\Big] \leq K_1 < \i,
\end{align*}
where $K_1$ is independent of $\e$.
\end{proposition}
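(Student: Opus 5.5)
The plan is to reduce every $\dbQ^*$-moment to a $\dbP$-moment through the entropy (Young) inequality already used in the proof of \autoref{q*}:
\[
xy\le e^{\beta x}+\frac{y}{\beta}\big(\log y-\log\beta-1\big),
\]
applied with $y=\frac{d\dbQ^*}{d\dbP}$ and $x$ equal to the relevant random variable. This gives
\[
\dbE^{\dbQ^*}[\xi]\le \dbE\big[e^{\beta\xi}\big]+\frac1\beta\,\dbE^{\dbQ^*}\Big[\log\frac{d\dbQ^*}{d\dbP}\Big]+K
\]
for $\xi\ge 0$. The relative entropy satisfies
\[
\dbE^{\dbQ^*}\Big[\log \frac{d\dbQ^*}{d\dbP}\Big]=\frac12\dbE^{\dbQ^*}\Big[\int_0^T|q^*(s)|^2ds\Big],
\]
because $\log\frac{d\dbQ^*}{d\dbP}=\int_0^T q^{*\top}dW^{q^*}+\frac12\int_0^T|q^*|^2ds$ and the stochastic integral is a true $\dbQ^*$-martingale, its quadratic variation having finite $\dbQ^*$-expectation. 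By \autoref{prop3.2} this quantity is finite and independent of $\e$. So it suffices to bound exponential $\dbP$-moments of the four quantities, uniformly in $\e$.

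For the $Y$-terms I take $\xi=\sup_t|Y^\e(t)|^2$ (respectively $\sup_t|Y^*(t)|^2$) and $\beta=\epsilon_1$ from \autoref{system-wellposedness}. The first inequality in \eqref{small} gives
\[
\dbE\Big[\sup_t e^{\epsilon_1|Y^{u}(t)|^2}\Big]\le C_3 .
\]
By the remark following that lemma, $C_3$ does not depend on the control, so it applies equally to $u^\e$ and $u^*$. Since $\epsilon_1$ is fixed and the entropy term is fixed, the bound is uniform in $\e$.

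For the $Z$-terms I take $\xi=\int_0^T|Z^\e|^2ds$ (respectively $\int_0^T|Z^*|^2ds$) and $\beta=\epsilon_2\le\gamma^2/18$. The second inequality in \eqref{small} supplies the bound $C_4$, which is again independent of the control. Adding the four resulting bounds gives $K_1$.

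The step needing most care is the uniformity in $\e$. It rests on two facts: the constants $C_3,C_4$ in \autoref{system-wellposedness} do not depend on $u$, which the remark asserts and which should be checked; and the density is that of $\dbQ^*$, built from the optimal pair, so its entropy is fixed. A second point to verify is the exact entropy identity. If the true-martingale property under $\dbQ^*$ is delicate, I would localize with stopping times and pass to the limit using Fatou, which only needs the inequality direction.
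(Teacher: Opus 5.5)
Your proposal is correct and is exactly the paper's argument. The paper's one-line proof applies the same Fenchel--Young inequality $xy\le e^{\beta x}+\frac{y}{\beta}(\log y-\log\beta-1)$ with $y=\frac{d\dbQ^*}{d\dbP}$, combined with the control-independent exponential moment bounds of \autoref{system-wellposedness}, and the entropy term is finite by \autoref{prop3.2}. You have written out the details the paper leaves implicit, including the entropy identity and the uniformity in $\e$.
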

\begin{proof}
By the fact that $xy\leq e^{\beta x} + \frac{y}{\beta}(\log y - \log \beta -1), (x,y,\beta) \in \dbR\times \dbR^{+}\times \dbR^{+}$ and \autoref{system-wellposedness}, one can get the desired result. 
\end{proof}

Set $\big(\hat{Y}^\e(t), \hat{Z}^\e(t)\big) := \big(Y^\e(t) - Y^*(t), Z^\e(t) - Z^*(t)\big)$. We have the following result.
\begin{proposition}\label{importan-}
Let \autoref{A1} hold. Then, we have that 
\begin{align}
\dbE^{\dbQ^*}\Big[\mathop{\sup}_{t\in [0, T]} |\hat{Y}^\e(t)|^4 \Big] = O(\e^2) \q\hb{and} \q \dbE^{\dbQ^*}\Big[\Big(\int_{0}^{T}|\hat{Z}^\e(t)|^2dt\Big)^2\Big] = O(\e^{2}).
\end{align}
\end{proposition}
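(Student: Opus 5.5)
The plan is to handle $\hat Y^\e$ through a one-sided comparison built on the convexity of $f$ in $z$, and then to obtain the bound on $\hat Z^\e$ from the bound on $\hat Y^\e$. Under $\dbQ^*$ we have $dW=dW^{q^*}+q^*dt$. Subtracting the two BSDEs gives
\begin{align*}
\hat Y^\e(t)=h(X^\e(T))-h(X^*(T))-\int_t^T\Big\{\Delta f^\e(s)-q^*(s)^\top\hat Z^\e(s)\Big\}ds+\int_t^T\hat Z^\e(s)^\top dW^{q^*}(s),
\end{align*}
where $\Delta f^\e(s)=f(s,X^\e,Y^\e,Z^\e,u^\e)-f(s,X^*,Y^*,Z^*,u^*)$. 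I split $\Delta f^\e$ into four pieces: an $x$-increment bounded by $\|f_x\|_\i|\hat X^\e|$, a $y$-increment $\tilde f^\e_y\hat Y^\e$ with $|\tilde f^\e_y|\le\|f_y\|_\i$, the spike term bounded by $L_4(1+|X^*|)1_{[t_0,t_0+\e]}$ using \eqref{2.4}, and the $z$-increment. For the $z$-increment, \eqref{2.5} gives $f(\cdot,Z^\e)-f(\cdot,Z^*)-q^*{}^\top\hat Z^\e\ge\frac{\gamma_1}{2}|\hat Z^\e|^2\ge0$. The $y$-increment I remove with the integrating factor $e^{\int\tilde f^\e_y}$, which is bounded above and below by $e^{\pm\|f_y\|_\i T}$.

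\textbf{Upper bound and $Z$-estimate.} Dropping the nonnegative quadratic term yields
\[
\hat Y^\e(t)\le K\,\dbE^{\dbQ^*}_t\Big[|\hat X^\e(T)|+\int_0^T|\hat X^\e|ds+\int_{t_0}^{t_0+\e}(1+|X^*|)ds\Big].
\]
By \autoref{state} (i),(ii), the random variable inside has fourth $\dbQ^*$-moment of order $O(\e^2)$. Doob's inequality under $\dbQ^*$ then gives $\dbE^{\dbQ^*}[\sup_t(\hat Y^{\e})_+^4]=O(\e^2)$. Keeping the quadratic term instead, evaluating at $t=0$ and rearranging gives
\[
\frac{\gamma_1}{2}\int_0^T|\hat Z^\e|^2ds\le K\Big(\sup_t|\hat Y^\e(t)|+\Xi^\e\Big)+\Big|\int_0^T\hat Z^\e{}^\top dW^{q^*}\Big|,
\]
where $\Xi^\e$ collects the $\hat X^\e$ and spike terms. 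Squaring, taking $\dbQ^*$-expectation, and absorbing the stochastic integral by BDG and Young's inequality gives $\dbE^{\dbQ^*}[(\int_0^T|\hat Z^\e|^2)^2]\le K\,\dbE^{\dbQ^*}[\sup|\hat Y^\e|^4]+O(\e^2)$. To make the absorption legitimate, I first localize with stopping times and use \autoref{ystate} and \autoref{q*} for the required a priori finiteness, then pass to the limit. So the $Z$-part follows once the two-sided $Y$-estimate is available.

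\textbf{Lower bound: the main obstacle.} Convexity in the reverse direction gives $f(Z^\e)-f(Z^*)\le f_z(\cdot,Z^\e)^\top\hat Z^\e$. This linearizes the equation under the measure $\dbQ^\e$ built from $q^\e=f_z(s,X^\e,Y^\e,Z^\e,u^\e)$, not under $\dbQ^*$. I would use the Delbaen--Hu--Richou dual representation
\[
Y^\e(t)=\operatorname*{essinf}_{q}\dbE^{\dbQ^q}_t\Big[h(X^\e(T))+\int_t^Tg^\e(s,Y^\e,q)ds\Big],
\]
with $g^\e$ the Fenchel transform of $f(s,X^\e,\cdot,\cdot,u^\e)$ as in the proof of \autoref{q*}, where the infimum is attained at $q^\e$. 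The same representation for $Y^*$ is attained at $q^*$. Plugging $q^\e$ into the representation of $Y^*$ gives $\hat Y^\e(t)\ge\dbE^{\dbQ^\e}_t[\cdots]$, with the same linear error terms $\Xi^\e$. The remaining task is to transfer this bound to $\dbQ^*$ moments.

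For the transfer, I would use that $q^\e-q^*=\tilde f^\e_{zz}\hat Z^\e+O(|\hat X^\e|+|\hat Y^\e|)+\text{spike}$ with $f_{zz}$ bounded. This means the density $d\dbQ^\e/d\dbQ^*$ is a stochastic exponential whose quadratic variation is controlled by $\int|\hat Z^\e|^2$ plus small terms. In particular, the $\dbQ^*$-expectation of $\int_0^T|\hat Z^\e|^2$ is itself $O(\e)$-small by the upper-bound step at $t=0$. Combined with the uniform $L\log L$ bounds and the uniform exponential moments of \autoref{system-wellposedness}, which are independent of $u$, and with the inequality $xy\le e^{\beta x}+\frac{y}{\beta}(\log y-\log\beta-1)$, this should show that $\dbE^{\dbQ^\e}_t[\Xi^\e]$ is controlled by a $\dbQ^*$ quantity with fourth moment $O(\e^2)$. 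That gives $\dbE^{\dbQ^*}[\sup(\hat Y^\e)_-^4]=O(\e^2)$.

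If the change of measure proves too lossy, my fallback is to write the lower bound directly under $\dbQ^*$ using $f(Z^\e)-f(Z^*)-q^*{}^\top\hat Z^\e\le\frac{\|f_{zz}\|_\i}{2}|\hat Z^\e|^2$. I would then try to close a coupled inequality between $\dbE^{\dbQ^*}_t\int_t^T|\hat Z^\e|^2$ and $\hat Y^\e(t)$. This route is delicate because $\|f_{zz}\|_\i/\gamma_1$ need not be small, and I expect this lower-bound step to be the main difficulty of the proposition.
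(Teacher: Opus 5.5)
Your upper bound for $(\hat Y^\e)_+$ is correct. It is the $q=q^*$ half of the paper's two-sided comparison $|\hat Y^\e(t)|\le\esssup_{q\in\{q^*,q^\e\}}|Y^{\e q}(t)-Y^{*q}(t)|$, and you rightly place the other half under $\dbQ^\e$. However, the proposal has two genuine gaps.

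\textbf{The lower bound is not proved, and the mechanism you sketch does not close.} The phrase ``this should show'' is where the whole difficulty of the proposition lies.
Without the BMO property there is no reverse H\"older inequality for $d\dbQ^\e/d\dbQ^*$. An entropy-type argument for $\sup_t\dbE^{\dbQ^\e}_t[\Xi^\e]$ would need pathwise control of $\dbE^{\dbQ^\e}_t[\int_t^T|q^\e-q^*|^2ds]$ for every $t$. You have at most an unconditional bound on $\dbE^{\dbQ^*}[\int_0^T|\hat Z^\e|^2ds]$. Even that requires a lower bound on $\hat Y^\e(0)$: either the lower bound you are trying to prove (circular), or optimality $\hat Y^\e(0)\ge0$, which you do not invoke.

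The missing idea is that no comparison of $\dbQ^\e$ with $\dbQ^*$ is needed. Every $\dbQ^\e$-conditional quantity can be bounded pathwise by time-$t$ values, in two steps.
\begin{enumerate}
\item Under $\dbQ\in\{\dbQ^*,\dbQ^\e\}$ with its $q$, the spike in the diffusion of $\hat X^\e$ creates the drift $\delta\sigma\,1_{[t_0,t_0+\e]}\,q$. Cauchy--Schwarz on an interval of length $\e$ then gives
$\dbE^{\dbQ}_t[\sup_{s\in[t,T]}|\hat X^\e(s)|^2]\le K\e\{|\hat X^\e(t)|^2/\e+1+|X^*(t)|^2+\dbE^{\dbQ}_t[\int_t^T|q(s)|^2ds]\}$, as in \eqref{xe}. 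The factor $\e$ comes from the spike length, not from closeness of measures.
\item Insert the coercivity $g^\e(s,y,q)\ge|q|^2/(8L_3)-C(1+|X^\e(s)|+|y|)$ of the Fenchel conjugate into $Y^\e(t)=\dbE^{\dbQ^\e}_t[h(X^\e(T))+\int_t^Tg^\e(s,Y^\e(s),q^\e(s))ds]$. Then bound $\dbE^{\dbQ^\e}_t[\int_t^T|Y^\e|ds]$ with the Briand--Hu a priori estimate, and absorb $\dbE^{\dbQ^\e}_t[\sup|X^\e|]$. This yields $\dbE^{\dbQ^\e}_t[\int_t^T|q^\e|^2ds]\le K(1+|Y^\e(t)|+|X^\e(t)|)$, which is \eqref{3}.
\end{enumerate}
Together these turn your lower bound into $(\hat Y^\e(t))_-^2\le\e\,\Theta^\e(t)$. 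Here $\Theta^\e(t)$ is a linear combination of $|\hat X^\e(t)|^2/\e$, $|X^*(t)|^2$, $|X^\e(t)|$, $|Y^\e(t)|$ and a constant. By Propositions~\ref{state} and~\ref{ystate}, $\sup_t\Theta^\e(t)$ has second $\dbQ^*$-moment bounded uniformly in $\e$, which is exactly what is needed.

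\textbf{The $\hat Z^\e$ step fails as written.} Squaring
$\frac{\gamma_1}{2}\int_0^T|\hat Z^\e|^2ds\le K(\sup_t|\hat Y^\e|+\Xi^\e)+|\int_0^T\hat Z^{\e\top}dW^{q^*}|$ produces two kinds of terms:
\begin{itemize}
\item $\dbE^{\dbQ^*}[\sup_t|\hat Y^\e|^2]+\dbE^{\dbQ^*}[(\Xi^\e)^2]=O(\e)$, not $\dbE^{\dbQ^*}[\sup_t|\hat Y^\e|^4]$;
\item $\dbE^{\dbQ^*}[\int_0^T|\hat Z^\e|^2ds]$, which Young's inequality only bounds by a constant.
\end{itemize}
The inequality compares an order-$\e$ quantity with order-$\sqrt\e$ quantities, so no power of it alone gives $O(\e^2)$.

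You need It\^o's formula for $|\hat Y^\e|^2$, which has the right homogeneity. The cross term $\int\hat Y^\e r\,ds$, with $|r|\le\frac{\|f_{zz}\|_\i}{2}|\hat Z^\e|^2$, then requires an auxiliary higher moment. The paper obtains it by raising the convexity inequality to the fourth power, which gives \eqref{k}. It then applies Cauchy--Schwarz against $\dbE^{\dbQ^*}[\sup_t|\hat Y^\e|^4]=O(\e^2)$ and closes with Young under the localization $\tau_n$. Consequently the $\hat Z^\e$ estimate depends on the two-sided fourth-moment bound for $\hat Y^\e$, and hence on the missing lower bound.
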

\begin{proof}
{\bf Step 1:} Let us first prove that $\dbE^{\dbQ^*}\Big[\mathop{\sup}_{t\in [0, T]} |\hat{Y}^\e(t)|^4 \Big] = O(\e^2)$. Define 
\begin{align*}
& q^\e(s) := f_z\big(s, X^\e(s),  Y^\e(s), Z^\e(s), u^\e(s)\big), \q f^\e(s, y, z) := f\big(s, X^\e(s), y, z, u^\e(s)\big),\\
&\frac{{ d}{\dbQ^\e}}{{ d}\dbP} := \mathscr{E}\Big(\int_{0}^{\cd}f_z\big(s, X^\e(s), Y^\e(s), Z^\e(s), u^\e(s) \big)^\top dW(s)\Big)_T = \mathscr{E}\Big(\int_{0}^{\cd}f^\e_z\big(s, Y^\e(s), Z^\e(s) \big)^\top dW(s)\Big)_T,\\
&dW^{q^\e}(t):= dW(t) - q^\e(t)dt, \q g^\e(t, y, q) := \mathop{\sup}_{z\in \dbR^{d}}\Big(z^\top q - f^\e(t, y, z)\Big).
\end{align*}
By a similar deduction of \autoref{prop3.2}, we can check that $\dbQ^\e$ is a probability measure and $W^{q^\e}$ is a Brownian motion under $\dbQ^\e$.
For $(q, \dbQ) \in \{(q^*, \dbQ^*), (q^\e, \dbQ^\e)\}$, let $Y^{*q}$, $Y^{\e q}$ be the solutions for the following BSDEs:
\begin{equation}\label{bsd}
\begin{aligned}
	Y^{*q}(r) = h\big(X^{*}(T)\big) + \int_{r}^{T} g^*\big(s, Y^{*q}(s), q(s)\big)ds + \int_{r}^{T}Z^{*q}(s)^\top dW^q(s),\\
	Y^{\e q}(r) = h\big(X^{\e}(T)\big) + \int_{r}^{T} g^\e\big(s, Y^{\e q}(s), q(s)\big)ds + \int_{r}^{T}Z^{\e q}(s)^\top dW^q(s),
	\end{aligned}\end{equation}
	where the existence and the $L^2_\dbF(\dbQ)$-integrability follows from Briand, Delyon, Hu, Pardoux, and Stoica \cite[Theorem 4.1]{briand2003lp}.
	By the proof of Delbaen, Hu, and Richou \cite [line 14, page 567]{delbaen2011uniqueness},
	one can obtain that for $t\in [0, T]$ and $q\in \{q^*, q^\e\},$
	\begin{align*}
	Y^*(t) \leq Y^{* q}(t), \q Y^\e(t) \leq Y^{\e q}(t) \q \hb{and} \q Y^*(t) = Y^{* q^*}(t), \q Y^\e(t) = Y^{\e q^\e}(t),
	\end{align*}
	which implies that for $t\in[0, T]$,
	\begin{equation}\label{important}
\begin{aligned}
	&\frac{|\hat{Y}^\e(t)|^2}{\e} = \frac{1}{\e}\Big\{\big|Y^{\e q^\e}(t) -    Y^{* q^*}(t) \big|^2\Big\}
	=\frac{1}{\e}\Big\{\big|\essinf_{q\in\{q^*, q^\e \}} Y^{\e q}(t) - \essinf_{q\in\{q^*, q^\e\}}   Y^{* q}(t) \big|^2\Big\}\\
	&\leq \frac{1}{\e} \esssup_{q\in \{q^*, q^\e\}} |Y^{\e q}(t) - Y^{* q}(t)|^2.
	%
	%
	\end{aligned}\end{equation}
	By \eqref{2.4} in \autoref{A1}, we deduce that
	\begin{equation}\label{g}
\begin{aligned}
	&\big|g^\e\big(s, Y^{\e q}(s), q\big) - g^*\big(s,Y^{* q}(s), q\big)\big| \\
	&\leq \big|\mathop{\sup}_{z\in \dbR^d} \big\{z^\top q - f^\e(s, Y^{\e q}(s), z)\big\} - \mathop{\sup}_{z\in \dbR^d} \big\{z^\top q - f^*(s, Y^{* q}(s), z)\big\}\big|
	\\
	&\leq \mathop{\sup}_{z\in \dbR^d}\big|f\big(s, X^\e(s), Y^{\e q}(s), z, u^\e(s)\big) - f\big(s, X^*(s), Y^{* q}(s), z, u^*(s)\big)\big|\\
	&\leq \|f_x\|_\i |\hat{X}^\e(s)| + \|f_y\|_\i |Y^{\e q}(s) - Y^{* q}(s)| + L_4\big(1+|X^*(s)| \big) 1_{[t_0, t_0+\e]}(s).
	\end{aligned}\end{equation}
	In addition, by the definition of $Y^{*q}$ and $Y^{\e q}$ in \eqref{bsd}, we have that for $t\in[0, T]$ and $r\in [t, T]$,
	\begin{equation}\label{equation}
\begin{aligned}
	Y^{\e q}(r) - Y^{*q}(r) 
	&= h(X^\e(T)) - h(X^*(T))  \\
	&\q + \int_{r}^{T}\{g^\e(s, Y^{\e q}(s), q(s)) - g^*(s, Y^{*q}(s), q(s)) \}ds+ \int_{r}^{T} \{Z^{ \e q}(s) - Z^{*q}(s)\}^\top dW^{q}(s) .
	\end{aligned}\end{equation}
	On one hand, applying It$\hat{\rm o}$'s formula to $|Y^{\e q}(r) - Y^{*q}(r)|^2$ and taking conditional expectation $\dbE_t^\dbQ[\cd]$, one can deduce that for $t\in[0, T]$ and $v\in [t, T]$,
	\begin{equation}\label{z}
\begin{aligned}
	&\dbE^\dbQ_t\Big[\int_{t}^{v}|Z^{\e q}(s) - Z^{*q}(s)|^2ds\Big]
	\leq |Y^{\e q}(t) - Y^{* q}(t)|^2 +\dbE^\dbQ_t\Big[\int_{t}^{v}|Z^{\e q}(s) - Z^{*q}(s)|^2ds\Big]
	\\
	& \leq \dbE^{\dbQ}_t\Big[|Y^{\e q}(v) - Y^{* q}(v)|^2\Big] + \dbE^\dbQ_t\Big[\int_{t}^{v}2\Big|\big(Y^{\e q}(s) - Y^{* q}(s)\big)\big\{ g^\e\big(s, Y^{\e  q}(s), q(s)\big) - g^*\big(s,Y^{* q}(s), q(s)\big)\big\}\Big|ds\Big].
	\end{aligned}\end{equation}
	On the other hand, taking  the square, the supremum, and then the expectation $\dbE_t^\dbQ[\cd]$ on both sides of \eqref{equation}, as well as Burkholder–Davis–Gundy inequality, Young's inequality, we deduce that for $t\in [0, T]$ and $v \in [t, T]$, 
	\begin{equation}\label{above}
\begin{aligned}
	&\dbE^\dbQ_t\Big[\mathop{\sup}_{r\in[v, T]}|Y^{\e q}(r) - Y^{*q}(r)|^2\Big]\\
	& \leq   3\|h_x\|_\i^2\dbE^\dbQ_t\big[|\hat{X}^\e(T)|^2\big]+ 3T\dbE_t^\dbQ\Big[\int_{v}^{T}\Big|g^\e\big(s, Y^{\e q}(s), q(s)\big) - g^*\big(s,Y^{*q}(s), q(s)\big)\Big|^2ds\Big] \\
	&\q+ 3c\dbE^\dbQ_t\Big[\int_{v}^{T}|Z^{\e q}(s) - Z^{*q}(s)|^2ds\Big]\\
	& \leq   3\|h_x\|_\i^2\dbE^\dbQ_t\big[|\hat{X}^\e(T)|^2\big] + (3T+3c)\dbE_t^\dbQ\Big[\int_{v}^{T}\Big|g^\e\big(s, Y^{\e q}(s), q(s)\big) - g^*\big(s,Y^{*q}(s), q(s)\big)\Big|^2ds\Big]\\
	& \q +3c\bigg\{\|h_x\|_\i^2\dbE^\dbQ_t\big[|\hat{X}^\e(T)|^2\big] +  \dbE^\dbQ_t\Big[\int_{v}^{T}\big|Y^{\e q}(s) - Y^{*q}(s)\big|^2ds\Big] \bigg\}
	\\
	& \leq   (3+3c)\|h_x\|_\i^2\dbE^\dbQ_t\big[|\hat{X}^\e(T)|^2\big]+ (12c+9T)(\|f_y\|_\i^2+1) \dbE^\dbQ_t\Big[\int_{v}^{T}\mathop{\sup}_{r\in[s, T]}\big|Y^{\e q}(r) - Y^{*q}(r)\big|^2ds\Big]\\
	&\q+ (6T+6c)\dbE_t^\dbQ\Big[\int_{t}^{T}\Big|\|f_x\|_\i |\hat{X}^\e(s)| + L_4\big(1+|X^*(s)| \big) 1_{[t_0, t_0+\e]}(s)\Big|^2ds\Big],
	\end{aligned}\end{equation}
	where $c$ is a constant related to Burkholder–Davis–Gundy inequality.
	It then follows from Gronwall's inequality that
	\begin{equation}
\begin{aligned}
	\dbE^\dbQ_t\Big[\mathop{\sup}_{r\in[v, T]}|Y^{\e q}(r) - Y^{*q}(r)|^2\Big] &\leq e^{(12c+9T)(\|f_y\|_\i^2+1)T} \bigg\{(3+3c)\|h_x\|_\i^2\dbE^\dbQ_t\big[|\hat{X}^\e(T)|^2\big]\\ 
	&\q+  (6T+6c)\dbE_t^\dbQ\Big[\int_{t}^{T}\Big|\|f_x\|_\i |\hat{X}^\e(s)| + L_4\big(1+|X^*(s)| \big) 1_{[t_0, t_0+\e]}(s)\Big|^2ds\Big]\bigg\}.
\end{aligned}
\end{equation}
Then, let $v=t$ in the above inequality, we obtain that for $t\in[0, T]$,
\begin{equation}\label{y}
\begin{aligned}
	\frac{|Y^{\e q}(t) - Y^{* q}(t)|^2}{\e}
	\leq e^{(12c+9T)(\|f_y\|_\i^2+1)T} \bigg\{ &\Big((3+3c)\|h_x\|^2_\i + 3(6T+6c)\|f_x\|_\i^2T\Big)\dbE_t^\dbQ\Big[\mathop{\sup}_{s\in [t, T]} \frac{|\hat{X}^\e(s)|^2}{\e}\Big] \\
	&+ 3(6T+6c)L_4^2\dbE_t^\dbQ\Big[\mathop{\sup}_{s\in [t, T]} |X^*(s)|^2\Big] +3(6T+6c)L_4^2\bigg\}.
	\end{aligned}\end{equation}
	By the definition of $\hat{X}^\e$ and $X^*$, as well as by Gronwall's inequality,  \autoref{A1}, and H$\ddot{\rm o}$lder's inequality, we obtain that
	\begin{equation}\label{xe}
\begin{aligned}
	&\dbE_t^\dbQ\Big[\mathop{\sup}_{s\in [t, T]} \frac{|\hat{X}^\e(s)|^2}{\e}\Big] \\
	&\leq e^{2\|b_x\|_\i T}\bigg\{\frac{3|\hat{X}^\e(t)|^2}{\e} +  3\dbE_t^\dbQ\Big[\Big\{\int_{t}^{T}\Big(\frac{\delta b(s)1_{[t_0, t_0+\e]}(s)}{\sqrt{\e}}+ \frac{\delta \sigma(s)1_{[t_0, t_0+\e]}(s)|q(s)|}{\sqrt{\e}}\Big)ds\Big\}^2\Big] + 36 L^2_1\bigg\} \\
	&\leq e^{2\|b_x\|_\i T}\bigg\{\frac{3|\hat{X}^\e(t)|^2}{\e}+ 48L^2_1(T^2+1) +36\|b_x\|_\i^2T^2\dbE_t^\dbQ\Big[\mathop{\sup}_{s\in [t, T]}|X^*(s)|^2\Big] + 36L^2_1\dbE_t^\dbQ\Big[\int_{t}^{T}|q(s)|^2ds\Big]\bigg\}
	\end{aligned}\end{equation}
	and 
	\begin{align}\label{x}
\dbE_t^\dbQ\Big[\mathop{\sup}_{s\in [t, T]} |X^*(s)|^2\Big] \leq e^{2\|b_x\|_\i T}\bigg\{3|X^*(t)|^2 + 8L_1^2(T^2+1) + 3L_1^2T\dbE_t^\dbQ\Big[\int_{t}^{T} |q(s)|^2ds\Big] \bigg\}.
\end{align}
Then, combining \eqref{y}, \eqref{xe}, and \eqref{x}, \eqref{important} can be changed to
\begin{equation}\label{1}
\begin{aligned}
	&\frac{|\hat{Y}^\e(t)|^2}{\e} 
	\leq  \esssup_{q\in\{q^*, q^\e\}}\bigg\{\frac{A_1|\hat{X}^\e(t)|^2}{\e}
	+ A_2|X^*(t)|^2
	+A_3\dbE_t^\dbQ\Big[\int_{t}^{T}|q(s)|^2ds\Big] + A_4\bigg\}\\
	&\leq \frac{A_1|\hat{X}^\e(t)|^2}{\e}
	+ A_2|X^*(t)|^2
	+A_3\dbE_t^{\dbQ^*}\Big[\int_{t}^{T}|q^*(s)|^2ds\Big] +A_3\dbE_t^{\dbQ^\e}\Big[\int_{t}^{T}|q^\e(s)|^2ds\Big]+ A_4 ,
	\end{aligned}\end{equation}
	where 
	\begin{align*}
&\Lambda_1 := (12c+9T)(\|f_y\|_\i^2+1)T,\q \Lambda_2:=(3+3c)\|h_x\|^2_\i + 3(6T+6c)\|f_x\|_\i^2T,\\
&A_1 := 3e^{ \Lambda_1+2\|b_x\|_\i T}\Lambda_2, \q
A_2:= e^{ \Lambda_1+2\|b_x\|_\i T}\bigg\{9(6T+6c)L_4^2 + 108e^{ 2\|b_x\|_\i T}\Lambda_2\|b_x\|_\i^2T^2\bigg\}, \\
&A_3 :=  e^{\Lambda_1+ 2\|b_x\|_\i T}\bigg\{\Big(36L_1^2+108L_1^2\|b_x\|_\i^2T^3e^{2\|b_x\|_\i T}\Big)\Lambda_2 + 9(6T+6c)L_1^2L_4^2T \bigg\},\\
&A_4 := e^{\Lambda_1}\bigg\{3(6T+6c)L_4^2+24(6T+6c)L_4^2L_1^2(T^2+1)e^{2\|b_x\|_\i T} \\
& \q\q\q\q\q+ \Lambda_2\Big(48L_1^2 (T^2+1)e^{2\|b_x\|_\i T}+ 288L_1^2\|b_x\|^2_\i T^2(T^2+1)e^{4\|b_x\|_\i T}\Big)\bigg\}.
\end{align*}
We hope that the right side of \eqref{1} does not contain the probability measure $\dbQ^\e$.
For this purpose, we will estimate $\dbE_t^{\dbQ^*}\big[\int_{t}^{T}|q^*(s)|^2ds\big]$ and  $\dbE_t^{\dbQ^\e}\big[\int_{t}^{T}|q^\e(s)|^2ds\big].$
According to the proof of Delbaen, Hu, and Richou \cite [line 14, page 567]{delbaen2011uniqueness} and \eqref{ge} , we deduce that
\begin{equation}\label{fin}
\begin{aligned}
	&Y^{*}(t) = \dbE^{\dbQ^*}_t \Big[h(X^*(T)) + \int_{t}^{T}g^*\big(s, Y^{*}(s),  q^*(s)\big)ds\Big]\\
	&\geq -|h(0)|-\|h_x\|_\i\dbE_t^{\dbQ^*}\Big[ |X^*(T)|\Big] + \frac{1}{8L_3} \dbE_t^{\dbQ^*}\Big[\int_{t}^{T}|q^*(s)|^2ds\Big] -\|f_y\|_\i \dbE_t^{\dbQ^*}\Big[\int_{t}^{T}|Y^{*}(s)|ds\Big] \\
	&\q- \dbE_t^{\dbQ^*}\Big[\int_{t}^{T}\Big\{L_2+L_3 + \|f_x\|_\i |X^*(r)|\Big\}dr\Big]\\
	&\geq \frac{1}{8L_3} \dbE_t^{\dbQ^*}\Big[\int_{t}^{T}|q^*(s)|^2ds\Big] -\|f_y\|_\i \dbE_t^{\dbQ^*}\Big[\int_{t}^{T}|Y^{*}(s)|ds\Big]\\
	&\q-\Big(\|h_x\|_\i + \|f_x\|_\i T \Big)\dbE_t^{\dbQ^*}\Big[ \mathop{\sup}_{s\in [t, T]}|X^*(s)|\Big] - (L_2 + L_3)T-|h(0)|.
	\end{aligned}\end{equation}
	On one hand, recall that
	\begin{align*}
f(t, x, y, z, u) \leq L_2 + L_3 + \|f_x\|_\i |x| + \|f_y\|_\i |y| + 2L_3|z|^2.
\end{align*}
Then, by Briand and Hu \cite[Proposition 1]{Briand and Hu}, one can obtain that
\begin{align*}
|Y^*(s)| \leq A_5 \Big\{\log \dbE_s\Big[\exp\Big\{A_6 \mathop{\sup}_{r\in [s, T]}|X^*(r) |\Big\} \Big] + A_6L_2T+A_6L_3T + A_6 |h(0)|\Big\},
\end{align*}
where $A_5:=\frac{1}{4L_3}, A_6:= 2(\frac{1}{A_5}+1)e^{\|f_y\|_\i T}(\|f_x\|_\i T+ \|h_x\|_\i  + 1)>1$,
which implies that
\begin{equation}
\begin{aligned}\label{st1}
	\dbE_t^{\dbQ^*}\Big[\int_{t}^{T}|Y^*(s)|ds\Big] \leq  A_5\bigg\{&\dbE^{\dbQ^*}_t\Big[\int_{t}^{T}\Big(\log \dbE_s\Big[\exp\Big\{A_6 \mathop{\sup}_{r\in [s, T]}|X^*(r) - X^*(s)|\Big\} \Big]+ A_6|X^*(s)|\Big)ds\Big]  \\
	& + A_6L_2T^2+A_6L_3T^2 + A_6 |h(0)|T\bigg\}.
	\end{aligned}\end{equation}
	Note the fact that
	\begin{equation}\label{n}
	\begin{aligned}
	&X^*(r) - X^*(s) = \int_{s}^{r}\{b(k, X^*(k), u^*(k)) - b(k, 0, u^*(k))\}dk + \int_{s}^{r}b(k, 0, u^*(k))dk + \int_{s}^{r}\sigma(k, u^*(k))dW(k)\\
	&\leq \int_{s}^{r}\|b_x\|_\i |X^*(k) - X^*(s)|dk + (\|b_x\|_\i |X^*(s)| + L_1)T + \int_{s}^{r}\sigma(k, u^*(k))dW(k)  
	\end{aligned}\end{equation}
	Using \eqref{n}, Gronwall's inequality, Doob's maximal inequality, and H$\ddot{\rm o}$lder's inequality, we deduce that
	\begin{align}
&\dbE_s\Big[\exp\Big\{A_6\mathop{\sup}_{r\in [s, T]}|X^*(r) - X^*(s) |\Big\}\Big]\nonumber \\
&\leq  \dbE_s\Big[\mathop{\sup}_{r\in[s, T]}\exp\Big\{A_6 e^{\|b_x\|_\i T}\Big|\int_{s}^{r}\sigma(k, u^*(k))dW(k)\Big| \Big\} \Big]
\cd\exp\Big\{A_6e^{\|b_x\|_\i T} \big(\|b_x\|_\i|X^*(s)|+L_1\big)T\Big\}\nonumber\\
&\leq \bigg\{\dbE_s\Big[\mathop{\sup}_{r\in[s, T]}\exp\Big\{A_6 e^{\|b_x\|_\i T}\int_{s}^{r}\sigma(k, u^*(k))dW(k) \Big\}  + \mathop{\sup}_{r\in[s, T]}\exp\Big\{-A_6 e^{\|b_x\|_\i T}\nonumber\\
&  \q\q\cd \int_{s}^{r}\sigma(k, u^*(k))dW(k) \Big\}\Big]\bigg\} \exp\Big\{A_6e^{\|b_x\|_\i T} \big(\|b_x\|_\i|X^*(s)|+L_1\big)T\Big\}\nonumber
\\
&\leq \bigg\{\dbE_s\Big[\mathop{\sup}_{r\in[s, T]}\mathscr{E}\Big(\int_{s}^{r}\sigma(k, u^*(k))dW(k) \Big)^{A_6 e^{\|b_x\|_\i T}}  + \mathop{\sup}_{r\in[s, T]}\mathscr{E}\Big(-\int_{s}^{r}\sigma(k, u^*(k))dW(k) \Big)^{A_6 e^{\|b_x\|_\i T}}\Big]\bigg\}\nonumber\\
&  \q\cd \exp\Big\{L^2_1TA_6 e^{\|b_x\|_\i T} + A_6e^{\|b_x\|_\i T} \big(\|b_x\|_\i|X^*(s)|+L_1\big)T\Big\}\nonumber
\\
&\leq \bigg\{\Big(\frac{d'}{d'-1}\Big)^{d'}\dbE_s\Big[\mathscr{E}\Big(\int_{s}^{T}\sigma(k, u^*(k))dW(k) \Big)^{A_6 e^{\|b_x\|_\i T}}  + \mathscr{E}\Big(-\int_{s}^{T}\sigma(k, u^*(k))dW(k) \Big)^{A_6 e^{\|b_x\|_\i T}}\Big]\bigg\}\nonumber\\
&  \q\cd \exp\Big\{L^2_1TA_6 e^{\|b_x\|_\i T} + A_6e^{\|b_x\|_\i T} \big(\|b_x\|_\i|X^*(s)|+L_1\big)T\Big\}
\nonumber\\
&\leq \bigg\{\Big(\frac{d'}{d'-1}\Big)^{d'}\dbE_s\Big[\mathscr{E}\Big(\int_{s}^{T}A_6 e^{\|b_x\|_\i T}\sigma(k, u^*(k))dW(k) \Big)  + \mathscr{E}\Big(-\int_{s}^{T}A_6 e^{\|b_x\|_\i T}\sigma(k, u^*(k))dW(k) \Big)\Big]\bigg\}\nonumber\\
&  \q\cd \exp\Big\{\big( |A_6e^{\|b_x\|_\i T}|^2 - A_6e^{\|b_x\|_\i T} \big)L_1^2T
+L^2_1TA_6 e^{\|b_x\|_\i T} + A_6e^{\|b_x\|_\i T} \big(\|b_x\|_\i|X^*(s)|+L_1\big)T\Big\}, \label{m}
\end{align}
where $d':= A_6 e^{\|b_x\|_\i T}.$
Since $\Big\{\mathscr{E}\Big(\int_{s}^{r}A_6 e^{\|b_x\|_\i T}\sigma(k, u^*(k))dW(k) \Big)\Big\}_{r\in [s, T]}$ is a martingale, we deduce that
\begin{align}\label{mm}
\dbE_s\Big[\mathscr{E}\Big(\int_{s}^{T}A_6 e^{\|b_x\|_\i T}\sigma(k, u^*(k))dW(k) \Big)\Big] = \mathscr{E}\Big(\int_{s}^{s}A_6 e^{\|b_x\|_\i T}\sigma(k, u^*(k))dW(k) \Big) = 1.
\end{align}
Then, combining \eqref{m} and \eqref{mm}, \eqref{st1} can be changed to 
\begin{equation}
\begin{aligned}\label{st2}
	\dbE_t^{\dbQ^*}\Big[\int_{t}^{T}|Y^*(s)|ds\Big] \leq  A_5&\dbE^{\dbQ^*}_t\Big[A_7\mathop{\sup}_{r\in [t, T]}|X^*(r)| + A_8\Big],
	\end{aligned}\end{equation}
	where 
	\begin{align*}
&A_7:= A_6e^{\|b_x\|_\i T}\|b_x\|_\i T^2 + A_6T, \\
&A_8 :=T\log\Big[2\Big(\frac{d'}{d'-1}\Big)^{d'}\Big]+(|A_6e^{\|b_x\|_\i T}|^2 L_1^2T^2
+ A_6e^{\|b_x\|_\i T} L_1T^2)+A_6T(L_2T+L_3T+|h(0)|).
\end{align*}
On the other hand, let $\nu$ be a constant, which will be chosen in the following. Then, by the definition of $X^*$ and H$\ddot{\rm o}$lder's inequality, we deduce that
\begin{equation}\label{x*}
\begin{aligned}
	&\dbE_t^{\dbQ^*}\Big[\mathop{\sup}_{r\in [t, T]} |X^*(r)|\Big] \leq e^{\|b_x\|_\i T}\bigg\{|X^*(t)|  + L_1\dbE_t^{\dbQ^*}\Big[\int_{t}^{T} |q^*(s)|ds\Big] + 2L_1(T+1)\bigg\} \\
	&\leq e^{\|b_x\|_\i T}\bigg\{|X^*(t)|  + L_1\nu^2\dbE_t^{\dbQ^*}\Big[\int_{t}^{T} |q^*(s)|^2ds\Big] + \frac{TL_1}{\nu^2} + 3L_1(T+1) \bigg\}.
	\end{aligned}\end{equation}
	It then follows from \eqref{st2}, \eqref{x*}, and \eqref{fin} that 
	\begin{align*}
A_9\dbE_t^{\dbQ^*}\Big[\int_{t}^{T}|q^*(s)|^2ds\Big] \leq |Y^*(t)| + A_{10} |X^*(t)| + A_{11},
\end{align*}
where
\begin{align*}
&\Lambda_3:=\|f_y\|_\i A_5 A_7 + \|h_x\|_\i + \|f_x\|_\i  T,\q
A_9:= \frac{1}{8L_3}-\Lambda_3L_1 e^{\|b_x\|_\i T}\nu^2, \q
A_{10} := \Lambda_3 e^{\|b_x\|_\i T},\\
&A_{11}:= \Big(3L_1(T+1) + \frac{TL_1}{\nu^2}\Big)\Lambda_3 e^{\|b_x\|_\i T} + (L_2+L_3)T+|h(0)|+\|f_y\|_\i A_5A_8.
\end{align*}
Let $\nu$ be the constant such that $A_9 > 0$, we obtain that
\begin{align}\label{2}
\dbE_t^{\dbQ^*}\Big[\int_{t}^{T}|q^*(s)|^2ds\Big] \leq \frac{1}{A_9}\Big\{|Y^*(t)| + A_{10} |X^*(t)| + A_{11}\Big\}.
\end{align}
Similarly, we obtain that
\begin{align}\label{3}
\dbE_t^{\dbQ^\e}\Big[\int_{t}^{T}|q^\e(s)|^2ds\Big] \leq \frac{1}{A_9}\Big\{|Y^\e(t)| + A_{10} |X^\e(t)| + A_{11}\Big\}.
\end{align}
Combining \eqref{2} and \eqref{3}, \eqref{1} can be changed to 
\begin{equation}\label{key}
\begin{aligned}
	&\frac{|\hat{Y}^\e(t)|^2}{\e} \\
	&\leq \frac{A_1|\hat{X}^\e(t)|^2}{\e}
	+ A_2|X^*(t)|^2
	+\frac{A_3}{A_9}\Big\{ |Y^{*}(t)| + |Y^{\e}(t)| + A_{10}|X^*(t)| + A_{10}|X^\e(t)| +2A_{11} \Big\}+ A_4.
	\end{aligned}\end{equation}
	It then follows from \autoref{state} and \autoref{ystate} that
	\begin{align*}
\dbE^{\dbQ^*}\Big[\mathop{\sup}_{t\in[0, T]}\frac{|\hat{Y}^\e(t)|^4}{\e^2}\Big] \leq& K\bigg\{1+\dbE^{\dbQ^*}\Big[\mathop{\sup}_{t\in[0, T]}\frac{|\hat{X}^\e(t)|^4}{\e^2}\Big] + \dbE^{\dbQ^*}\Big[\mathop{\sup}_{t\in[0, T]}|X^*(t)|^4\Big] \\
&\q\q+  \dbE^{\dbQ^*}\Big[\mathop{\sup}_{t\in[0, T]}\Big\{|X^*(t)|^2 + |X^\e(t)|^2 + |Y^*(t)|^2 + |Y^\e(t)|^2\Big\}\Big]\bigg\} \leq K < \i.
\end{align*}
%
%
Therefore, we have proved that $\dbE^{\dbQ^*}\Big[\mathop{\sup}_{t\in [0, T]}|\hat{Y}^\e(t)|^4\Big] = O(\e^2)$. \\
{\bf Step 2:} In this step, we will estimate $\dbE^{\dbQ^*}\Big[\Big(\int_{0}^{T}|\hat{Z}^\e(t)|^2dt\Big)^2\Big]$.
Let $\tau_n := \inf\big\{t: \int_{0}^{t}|\hat{Z}^\e(s)|^2ds \geq n\big\} \wedge T$.
Then, by the definition of $Y^\e$ and $Y^*$, we obtain that 
\begin{equation}\label{es}
\begin{aligned}
	\hat{Y}^\e(0) 
	= \hat{Y}^\e(\tau_n) - \int_{0}^{\tau_n}\big\{e(s) + r(s)\big\} ds 
	+ \int_{0}^{\tau_n} \hat{Z}^\e(s)^\top dW^{q^*}(s),
	\end{aligned}\end{equation}
	where
	\begin{align*}
&e(s):= f\big(s, X^\e(s), Y^\e(s), Z^\e(s), u^\e(s)\big) - f\big(s, X^*(s), Y^*(s), Z^\e(s), u^*(s)\big) \\
&r(s):= f\big(s, X^*(s), Y^*(s), Z^\e(s), u^*(s)\big) - f\big(s, X^*(s), Y^*(s), Z^*(s), u^*(s)\big)- f_z(s)^\top\hat{Z}^\e(s).
\end{align*}
By \eqref{2.4}, \eqref{2.5}, and Taylor's expansion to $r$, we deduce that
\begin{align}
&|e(s)| \leq \|f_x\|_\i |\hat{X}^\e(s)| + \|f_y\|_\i |\hat{Y}^\e(s)| + L_4 (1+|X^*(s)|)1_{[t_0, t_0+\e]}(s),\label{e}\\
& \frac{\gamma_1}{2} |\hat{Z}^\e(s)|^2 \leq |r(s)|\leq \frac{\|f_{zz}\|_\i}{2} |\hat{Z}^\e(s)|^2. \label{r}
\end{align}
Then, by \eqref{e} and the left inequality of \eqref{r}, we obtain that
\begin{equation}\label{bsde}
\begin{aligned}
	&\frac{\gamma_1}{2}\int_{0}^{\tau_n}|\hat{Z}^\e(s)|^2ds   
	\leq \hat{Y}^\e(\tau_n) - \hat{Y}^\e(0)- \int_{0}^{\tau_n} e(s)  ds
	+ \int_{0}^{\tau_n} \hat{Z}^\e(s)^\top dW^{q^*}(s)\\
	&\leq (2+\|f_y\|_\i T)\mathop{\sup}_{s\in[0, T]}|\hat{Y}^\e(s)| + \|f_x\|_\i T\mathop{\sup}_{s\in[0, T]}|\hat{X}^\e(s)|+ L_4\Big(1+\mathop{\sup}_{s\in[0, T]}|X^*(s)|\Big) \e\\
	&\q
	+ \int_{0}^{\tau_n} \hat{Z}^\e(s)^\top dW^{q^*}(s).
	\end{aligned}\end{equation}
	Taking the fourth power and the expectation $\dbE^{\dbQ^*}[\cd]$ on both sides of \eqref{bsde} as well as by Burkholder–Davis–Gundy inequality, we deduce that 
\begin{equation}\label{k}
\begin{aligned}
	\dbE^{\dbQ^*}\Big[\Big(\int_{0}^{\tau_n}|\hat{Z}^\e(s)|^2ds\Big)^4\Big] \leq& K \Big\{ \e^2 + \dbE^{\dbQ^*}\Big[\Big(\int_{0}^{\tau_n}|\hat{Z}^\e(s)|^2ds\Big)^2\Big]\Big\}.
	\end{aligned}\end{equation}
Furthermore, applying It$\hat{\rm o}$'s formula to $\{|\hat{Y}^\e(t)|^2\}_{t\in [0,T]}$ and by \autoref{A1}, we obtain that
\begin{align}\label{e2}
&\int_{0}^{\tau_n}|\hat{Z}^\e(s)|^2ds  
= |\hat{Y}^\e(\tau_n)|^2 - |\hat{Y}^\e(0)|^2 - \int_{0}^{\tau_n}2\hat{Y}^\e(s)\big\{e(s)+r(s) \big\}ds+ \int_{0}^{\tau_n} 2\hat{Y}^\e(s)\hat{Z}^\e(s)^\top dW^{q^*}(s).
\end{align}
Taking the square and the expectation $\dbE^{\dbQ^*}[\cd]$ on both sides of \eqref{e2} and by Young's inequality and \eqref{k}, we deduce that
\begin{align*}
\dbE^{\dbQ^*}\Big[\Big(\int_{0}^{\tau_n}|\hat{Z}^\e(s)|^2ds \Big)^2\Big] \leq K\bigg\{&\dbE^{\dbQ^*}\Big[\mathop{\sup}_{t\in [0, T]} |\hat{Y}^\e(t)|^4\Big] + \dbE^{\dbQ^*}\Big[ \mathop{\sup}_{t\in [0, T]} |\hat{X}^\e(t)|^4\Big] + \dbE^{\dbQ^*}\Big[ \mathop{\sup}_{t\in [0, T]} |X^*(t)|^4\Big]\e^2 \\
&+  \dbE^{\dbQ^*}\Big[\mathop{\sup}_{t\in [0, T]} |\hat{Y}^\e(t)|^4\Big]^{\frac{1}{2}}\dbE^{\dbQ^*}\Big[\Big(\int_{0}^{\tau_n}|\hat{Z}^\e(s)|^2ds \Big)^2\Big]^{\frac{1}{2}}\bigg\}\\
\leq K\e^2& + K\dbE^{\dbQ^*}\Big[\Big(\int_{0}^{\tau_n}|\hat{Z}^\e(s)|^2ds \Big)^2\Big]^{\frac{1}{2}}\e \leq \frac{1}{2}\dbE^{\dbQ^*}\Big[\Big(\int_{0}^{\tau_n}|\hat{Z}^\e(s)|^2ds \Big)^2\Big] + K\e^2,
\end{align*}
which implies that
\begin{align}
\dbE^{\dbQ^*}\Big[\Big(\int_{0}^{\tau_n}|\hat{Z}^\e(s)|^2ds \Big)^2\Big] \leq K\e^2.
\end{align}
It then follows from Fatou's lemma that
\begin{align*}
\dbE^{\dbQ^*}\Big[\Big(\int_{0}^{T}|\hat{Z}^\e(s)|^2ds \Big)^2\Big] = \dbE^{\dbQ^*}\Big[\liminf_{n\rightarrow \i}\Big(\int_{0}^{\tau_n}|\hat{Z}^\e(s)|^2ds \Big)^2\Big] \leq \liminf_{n\rightarrow \i}\dbE^{\dbQ^*}\Big[\Big(\int_{0}^{\tau_n}|\hat{Z}^\e(s)|^2ds \Big)^2\Big] \leq K\e^2.
\end{align*}

This completes the proof.
\end{proof}

\begin{remark}
Since $h$ is unbounded, the existing literature does not guarantee that $\int_{0}^{\cd}f_z(s)^\top dW(s)$ has the BMO property. Thus, we do not have a result analogous to Hu, Ji, and Xu \cite[Proposition 3.1]{hu2022global}, which causes considerable difficulties in our analysis. 
\end{remark}

\section{The adjoint equations and the variational equation  } \label{section5}
In this section, we study the first-order and second-order adjoint equations, and the variational equation. For simplicity, we consider the case of $n=d=k=1$, and the other cases can be derived similarly. 

By applying It$\hat{\rm o}$'s formula to $O_1$ and second-order Taylor's expansion to $H_2, V_3$ in \eqref{tilde-y2}, and by appropriately choosing the generators of the adjoint equations and the variational equation to eliminate the terms $X_1+X_2$ and $(X_1)^2$, we can determine these equations. 
Since the derivation follows exactly the method developed in Hu \cite{Hu}, we directly present the adjoint equations in \eqref{first ad}, \eqref{second ad} and the variational equation in \eqref{va}.
The following are the adjoint equations:
\begin{align}
&P_1(t) = h_x\big(X^*(T)\big) - \int_{t}^{T} \Big\{ \Big(f_y (s) - b_x(s)\Big)P_1(s) + f_z(s)Q_1(s) + f_x(s)\Big\}ds + \int_{t}^{T}Q_1(s)dW(s),\label{first ad}\\  
&P_2(t) = h_{xx}\big(X^*(T)\big) - \int_{t}^{T}\Big\{ \Big(f_y(s) - 2b_x(s)\Big) P_2(s) + f_z(s)Q_2(s) - b_{xx}(s)P_1(s)  \nonumber\\
&\q\q\q\q\q\q\q\q\q\q\q\q\q+ [1, P_1(s), Q_1(s)]D^2f(s) [1, P_1(s), Q_1(s)]^\top \Big\}ds + \int_{t}^{T}Q_2(s)dW(s),\label{second ad}
\end{align}
where  $D^2f$ is the Hessian matrix of $f$ with respect to $(x, y, z)$.
%
Recall that $\dbQ^* $ and $W^{q^*}$ are defined in \autoref{prop3.2} and denote
\begin{align*}
&BMO(\dbQ^*):=
 \Big\{Z:\Om\times [0,T] \to\dbH\Bigm| Z~ \hb{is $\dbF$-progressively measurable, 
	and~} \\
& \quad \quad \quad \quad \quad \quad\quad\ \quad \quad \quad   \|Z\|_{BMO(\dbQ^*)} := \mathop{\sup}\limits_{\tau \in \mathscr{T}[0,T]} \Big\| \dbE^{\dbQ^*}_{\tau}\Big[\int_\tau^T|Z(s)|^2ds\Big]\Big\|^{\frac{1}{2}}_{\i}   <\i \Big\},
\end{align*}
where $\mathscr{T}[0,T]$ denotes the set of all $\dbF$-stopping times $\tau$ with values in $[0,T]$, and $\dbE^{\dbQ^*}_{\tau}$ is the conditional expectation under $\dbQ^*$ with respect to $\sigma$-field $\sF_{\tau}$. 

In the following, we will present the well-posedness result of adjoint equations.
\begin{proposition}\label{wellpose of ad}
Let \autoref{A1} hold. Then,  for $\beta > 1$,  \eqref{first ad} admits a unique solution $\big(P_1, Q_1\big) \in S^\i_{\dbF}(\dbQ^*) \times L_{\dbF}^\beta(\dbQ^*) $
and \eqref{second ad} admits a unique solution $\big(P_2, Q_2\big) \in S_{\dbF}^\i(\dbQ^*) \times L_{\dbF}^\beta(\dbQ^*)$. 
\end{proposition}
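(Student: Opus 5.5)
The plan is to rewrite both adjoint equations under $\dbQ^*$, where they become linear BSDEs with bounded coefficients apart from an integrable forcing term. Since $dW(s)=dW^{q^*}(s)+q^*(s)ds$ and $q^*=f_z$, the term $f_z(s)Q_1(s)ds$ in \eqref{first ad} cancels against $Q_1(s)q^*(s)ds$. Thus \eqref{first ad} is equivalent to
\begin{align*}
P_1(t)=h_x\big(X^*(T)\big)-\int_t^T\Big\{\big(f_y(s)-b_x(s)\big)P_1(s)+f_x(s)\Big\}ds+\int_t^TQ_1(s)dW^{q^*}(s),
\end{align*}
a linear BSDE driven by the $\dbQ^*$-Brownian motion $W^{q^*}$ (\autoref{prop3.2}). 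Its coefficients $f_y-b_x$ and $f_x$ and its terminal value $h_x(X^*(T))$ are all bounded by \autoref{A1}.

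\textbf{First-order equation.} I would construct the solution explicitly. Set $\Gamma(t)=\exp\{-\int_0^t(f_y-b_x)ds\}$, which is bounded above and below. Then define
\begin{align*}
P_1(t)=\Gamma(t)^{-1}\dbE^{\dbQ^*}_t\Big[\Gamma(T)h_x(X^*(T))-\int_t^T\Gamma(s)f_x(s)ds\Big].
\end{align*}
This $P_1$ is bounded, so $P_1\in S^\i_\dbF(\dbQ^*)$. Next I apply the martingale representation theorem under $\dbQ^*$ with respect to $W^{q^*}$. The filtration issue is the delicate point: $\dbF$ is generated by $W$, not by $W^{q^*}$. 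I would handle it with the standard result that every $(\dbQ^*,\dbF)$-local martingale is a stochastic integral against $W^{q^*}$. This holds because the density $\mathscr{E}(\int q^*dW)$ is a true $\dbP$-martingale, and Bayes' rule transfers the $\dbP$-representation. This gives $Q_1$ with $\int_0^T|Q_1|^2ds<\i$ a.s.

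To obtain $L^\beta_\dbF(\dbQ^*)$ for every $\beta>1$, I use that $\int_0^\cdot Q_1dW^{q^*}$ equals a bounded process minus a $ds$-integral with bounded integrand. Itô's formula for $|P_1|^2$ between stopping times $\tau$ and $T$ then gives $\dbE^{\dbQ^*}_\tau[\int_\tau^T|Q_1|^2ds]\le K$, so $Q_1\in BMO(\dbQ^*)$. The energy inequality for BMO martingales then yields $\dbE^{\dbQ^*}[(\int_0^T|Q_1|^2ds)^{\beta/2}]<\i$ for all $\beta$. Uniqueness follows by taking the difference of two solutions: it solves a homogeneous linear BSDE with zero data. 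For a solution in $S^\i\times L^\beta$ the stochastic integral is a true martingale, so the difference vanishes.

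\textbf{Second-order equation.} After the same cancellation of $f_zQ_2$, \eqref{second ad} becomes a linear BSDE in $P_2$ with bounded coefficient $f_y-2b_x$ and bounded terminal value $h_{xx}(X^*(T))$. The forcing term is $-b_{xx}P_1+[1,P_1,Q_1]D^2f[1,P_1,Q_1]^\top$. This is the step I expect to be the main obstacle. Since $D^2f$ is bounded and $P_1$ is bounded, the forcing is bounded by $K(1+|Q_1|^2)$, so it is not bounded. What saves the argument is the BMO property of $Q_1$ just established: $\dbE^{\dbQ^*}_t[\int_t^T|Q_1|^2ds]\le K$ uniformly in $t$. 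Consequently the explicit representation
\begin{align*}
P_2(t)=\Gamma_2(t)^{-1}\dbE^{\dbQ^*}_t\Big[\Gamma_2(T)h_{xx}(X^*(T))+\int_t^T\Gamma_2(s)\,\Phi(s)\,ds\Big]
\end{align*}
is bounded, where $\Gamma_2$ is the exponential of $-\int_0^t(f_y-2b_x)ds$ and $-\Phi$ is the forcing term above. Hence $P_2\in S^\i_\dbF(\dbQ^*)$.

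For $Q_2$, I again use martingale representation, now applied to the martingale $\dbE^{\dbQ^*}_t[\,\cdot\,]$ whose terminal variable is the integrable bracket in the representation of $P_2$. Itô's formula for $|P_2|^2$ between $\tau$ and $T$ gives $\dbE^{\dbQ^*}_\tau[\int_\tau^T|Q_2|^2ds]\le K(1+\dbE^{\dbQ^*}_\tau[\int_\tau^T(1+|Q_1|^2)ds])\le K$, because $P_2$ is bounded. So $Q_2\in BMO(\dbQ^*)\subset L^\beta_\dbF(\dbQ^*)$ for every $\beta>1$. Uniqueness is proved exactly as in the first-order case.
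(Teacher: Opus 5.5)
Your proposal is correct and follows the paper's proof in substance. You rewrite both equations under $\dbQ^*$ via the Girsanov cancellation of $f_zQ_i$, obtain $P_1\in S^\i_{\dbF}(\dbQ^*)$ and $Q_1\in BMO(\dbQ^*)$, pass to $L^\beta_{\dbF}(\dbQ^*)$ by the energy inequality, and bound $P_2$ through its explicit linear representation using the BMO property of $Q_1$. The differences are minor: you build the solutions by hand, explicitly justifying martingale representation under $(\dbQ^*,\dbF)$, where the paper cites Fan--Hu--Tang and Briand--Delyon--Hu--Pardoux--Stoica; and you additionally obtain $Q_2\in BMO(\dbQ^*)$, whereas the paper only gets $Q_2\in L^\beta_{\dbF}(\dbQ^*)$ from the $L^p$ theory.
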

\begin{proof}
	On one hand,
note that \eqref{first ad} can be rewritten to 
\begin{equation}
\begin{aligned}
	-P_1(t) = -h_x\big(X^*(T)\big) + \int_{t}^{T} \Big\{ \Big(-f_y (s) + b_x(s)\Big)\big(-P_1(s)\big) + f_x(s)\Big\}ds - \int_{t}^{T}Q_1(s)dW^{q^*}(s).
\end{aligned}
\end{equation}
It then follows from Fan, Hu, and Tang \cite[Theorem 2.4]{fan2023multi}  that for $\beta>1$, $P_1 \in S^\i_{\dbF}(\dbQ^*)$ and  $Q_1 \in BMO(\dbQ^*)$. Furthermore, by the energy inequality, we obtain that $Q_1 \in L^\beta_{\dbF}(\dbQ^*)$, for $\beta > 1$.

On the other hand, note that \eqref{second ad} can be rewritten to 
\begin{align*}
&-P_2(t) = -h_{xx}\big(X^*(T)\big) + \int_{t}^{T}\Big\{ \Big(-f_y(s) + 2b_x(s)\Big) \big(-P_2(s) \big) - b_{xx}(s)P_1(s)  \nonumber\\
&\q\q\q\q\q\q\q\q\q\q\q\q\q+ [1, P_1(s), Q_1(s)] D^2f(s) [1, P_1(s), Q_1(s)]^\top \Big\}ds - \int_{t}^{T}Q_2(s)dW^{q^*}(s).
\end{align*}
Then, by Briand, Delyon, Hu, Pardoux, and Stoica \cite{briand2003lp}, \autoref{A1}, and the fact that $\big(P_1, Q_1\big) \in S^\i_{\dbF}(\dbQ^*) \times L_{\dbF}^\beta(\dbQ^*), \beta>1$, we obtain that for $\beta > 1$,
\eqref{second ad} admits a unique solution $\big(P_2, Q_2\big) \in S_{\dbF}^\beta(\dbQ^*) \times L_{\dbF}^\beta(\dbQ^*)$.
Furthermore, by applying It$\hat{\rm o}$'s formula, we obtain that
\begin{align*}
-P_2(t) = \dbE_t^{\dbQ^*}\Big[\Pi(t,T)\Big(-h_{xx}\big(X^*(T)\big) \Big) + \int_{t}^{T}\Pi(t,s)\phi(s)ds \Big].
\end{align*}
where 
\begin{align*}
&\Pi(t, s):= \exp\Big\{\int_{t}^{s}\Big(-f_y(r)+2b_x(r)\Big)dr\Big\}\\
&\phi (s):= - b_{xx}(s)P_1(s)+ [1, P_1(s), Q_1(s)] D^2f(s) [1, P_1(s), Q_1(s)]^\top.
\end{align*}
It then follows from \autoref{A1} that
\begin{align*}
\|P_2\|_{S^\i_{\dbF}(\dbQ^*)} \leq K \Big\{1+ \mathop{\sup}\limits_{\tau \in \mathscr{T}[0,T]}\Big\|\dbE_\tau^{\dbQ^*}\Big[\int_{\tau}^{T}|Q_1(s)|^2ds\Big]\Big\|_{\i}\Big\} < \i,
\end{align*}
which implies the desired result.

\end{proof}
Next, we give the variational equation and its estimate. 
\begin{equation}\label{va}
\begin{aligned}
\hat{Y}(t) = & - \int_{t}^{T} \Big\{f_y(s)\hat{Y}(s) + f_z(s)\hat{Z}(s) + I(s)\Big\}ds+\int_{t}^{T}\hat{Z}(s) dW(s),
\end{aligned}\end{equation}
where
\begin{equation}\label{i}
\begin{aligned}
I(s):=& \Big[-P_1(s)\delta b(s) + Q_1(s)\delta \sigma(s) - \frac{1}{2}P_2(s)\big(\delta\sigma(s)\big)^2\\
&\q\q\q\q+ f\big(s, X^*(s), Y^*(s), Z^*(s)-P_1(s)\delta \sigma(s), u(s) \big) - f(s)\Big]1_{[t_0, t_0+\e]}(s).
\end{aligned}
\end{equation}
\begin{remark}\label{indep}
If $\sigma$ depends on the state variable $x$, then $f_z$ will appear in the coefficient of $P_1$. Consequently, we need the result that $\dbE^{\dbQ^*}\big[e^{\int_{0}^{T}f_z(s)ds}\big] < \i$, which cannot be directly guaranteed by the estimates established in this paper.
\end{remark}

\begin{proposition}
Let \autoref{A1} hold. Then, we have that
\begin{align*}
\dbE^{\dbQ^*}\Big[\mathop{\sup}_{t\in [0, T]} |\hat{Y}(t)|^2 + \int_{0}^{T}|\hat{Z}(t)|^2dt \Big] = O(\e^{2}).
\end{align*}
\end{proposition}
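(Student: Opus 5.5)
Rewrite \eqref{va} under $\dbQ^*$. Since $dW(s)=dW^{q^*}(s)+f_z(s)ds$, the $f_z\hat Z$ term is absorbed and
\begin{align*}
\hat{Y}(t) = -\int_{t}^{T}\big\{f_y(s)\hat{Y}(s)+I(s)\big\}ds+\int_{t}^{T}\hat{Z}(s)dW^{q^*}(s).
\end{align*}
This is a linear BSDE with bounded coefficient $f_y$, zero terminal value and driver $I$ supported on $[t_0,t_0+\e]$, driven by a $\dbQ^*$-Brownian motion. Existence and uniqueness in $S^2_\dbF(\dbQ^*)\times L^2_\dbF(\dbQ^*)$ follow from Briand, Delyon, Hu, Pardoux, and Stoica \cite{briand2003lp}, once $\int_0^T|I(s)|ds\in L^2(\dbQ^*)$. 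The standard a priori estimate (It\^o's formula for $|\hat Y|^2$, Gronwall, Burkholder--Davis--Gundy) then gives
\begin{align*}
\dbE^{\dbQ^*}\Big[\sup_{t\in[0,T]}|\hat Y(t)|^2+\int_0^T|\hat Z(t)|^2dt\Big]\leq K\,\dbE^{\dbQ^*}\Big[\Big(\int_{t_0}^{t_0+\e}|I(s)|ds\Big)^2\Big].
\end{align*}
So the whole statement reduces to showing that the right-hand side is $O(\e^2)$.

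To bound $I$, I use \autoref{wellpose of ad} ($P_1,P_2\in S^\i_\dbF(\dbQ^*)$) and the boundedness of $\sigma$. This gives
\begin{align*}
|P_1\delta b|+|P_2(\delta\sigma)^2|\leq K(1+|X^*(s)|), \q |Q_1\delta\sigma|\leq K|Q_1(s)|.
\end{align*}
For the $f$ term, I split
\begin{align*}
f(s,X^*,Y^*,Z^*-P_1\delta\sigma,u(s))-f(s)=\big[f(\cdot,Z^*-P_1\delta\sigma,u(s))-f(\cdot,Z^*,u(s))\big]+\delta f(s).
\end{align*}
By \eqref{2.4}, $|\delta f(s)|\leq L_4(1+|X^*(s)|)$. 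By the growth bound $|f_z|\leq L_3(1+|z|)$ and the boundedness of $P_1\delta\sigma$, the first bracket is at most $K(1+|Z^*(s)|)$. Hence
\begin{align*}
|I(s)|\leq K\big(1+|X^*(s)|+|Q_1(s)|+|Z^*(s)|\big)1_{[t_0,t_0+\e]}(s).
\end{align*}

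The $X^*$ part is handled by \autoref{state}(i):
\begin{align*}
\dbE^{\dbQ^*}\Big[\Big(\int_{t_0}^{t_0+\e}(1+|X^*|)ds\Big)^2\Big]\leq \e^2K\Big(1+\dbE^{\dbQ^*}\big[\sup|X^*|^2\big]\Big)=O(\e^2).
\end{align*}
The main obstacle is the $Q_1$ and $Z^*$ terms. A plain Cauchy--Schwarz bound gives only $\e\,\dbE^{\dbQ^*}\big[\int_{t_0}^{t_0+\e}|Z^*|^2ds\big]$. That is $O(\e^2)$ only if the $\dbQ^*$-integral of $|Z^*|^2$ over the window is $O(\e)$, which holds for Lebesgue-a.e.\ $t_0$ but not uniformly in $t_0$.

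For $Q_1$ I would use its $BMO(\dbQ^*)$ property, which is stronger than the stated $L^\beta$ bounds. For $Z^*$ I would use $f_z(s)=q^*(s)$ and the growth of $f_z$ together with the strong convexity \eqref{2.5}. These imply $\gamma_1|z|\leq |f_z(z)-f_z(0)|$, so $|Z^*|\leq K(1+|q^*|)$, and \autoref{q*} places $q^*$ in $L^4_\dbF(\dbQ^*)$. This shows $s\mapsto\dbE^{\dbQ^*}[|Z^*(s)|^2+|Q_1(s)|^2]$ is integrable on $[0,T]$. By the Lebesgue differentiation theorem (\autoref{Lebes}),
\begin{align*}
\frac1\e\,\dbE^{\dbQ^*}\Big[\int_{t_0}^{t_0+\e}\big(|Z^*|^2+|Q_1|^2\big)ds\Big]
\end{align*}
stays bounded for a.e.\ $t_0$. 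Combining this with Cauchy--Schwarz yields $O(\e^2)$ for these terms as well, which gives the result for a.e.\ $t_0$, consistent with the remark following \autoref{Lebes}.
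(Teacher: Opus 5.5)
Your proposal is correct and follows the paper's own proof. Both rewrite \eqref{va} under $\dbQ^*$, apply the Briand--Delyon--Hu--Pardoux--Stoica estimate, bound $|I|\leq K(1+|X^*|+|Q_1|+|Z^*|)1_{[t_0,t_0+\e]}$, and finish with Cauchy--Schwarz plus \autoref{Lebes}, which yields a $t_0$-dependent constant for a.e.\ $t_0$. The remaining differences are minor: you obtain $Z^*\in L^2_\dbF(\dbQ^*)$ from the strong convexity \eqref{2.5} and \autoref{q*} instead of citing \autoref{ystate}, and the BMO property of $Q_1$ is unnecessary, since $Q_1\in L^2_\dbF(\dbQ^*)$ already suffices for the Lebesgue-differentiation step.
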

\begin{proof}
Note that \eqref{va} can be rewritten to
\begin{equation}
	\begin{aligned}
		-\hat{Y}(t) = &  \int_{t}^{T} \bigg\{-f_y(s)\big(-\hat{Y}(s)\big)  + \Big[-P_1(s)\delta b(s) + Q_1(s)\delta \sigma(s) - \frac{1}{2}P_2(s)\big(\delta\sigma(s)\big)^2\\
		&\q\q+ f\big(s, X^*(s), Y^*(s), Z^*(s)-P_1(s)\delta \sigma(s), u(s) \big) - f(s)\Big]1_{[t_0, t_0+\e]}(s)\bigg\}ds-\int_{t}^{T}\hat{Z}(s) dW^{q^*}(s).
\end{aligned}\end{equation}
By \autoref{A1}, we deduce that
\begin{align*}
|f\big(s, X^*(s), Y^*(s), Z^*(s)-P_1(s)\delta \sigma(s), u(s) \big) - f(s)| \leq K\big\{1+|X^*(s)| +|Z^*(s)|\big\}.
\end{align*}
It then follows from Briand, Delyon, Hu, Pardoux, and Stoica \cite{briand2003lp} as well as \autoref{A1}, \autoref{wellpose of ad}, \autoref{state}, \autoref{system-wellposedness}, and \autoref{Lebes}  that 
\begin{align*}
&\frac{1}{\e^2}\dbE^{\dbQ^*}\Big[\mathop{\sup}_{t\in [0, T]} |\hat{Y}(t)|^2 + \int_{0}^{T}|\hat{Z}(t)|^2dt \Big] 
\leq \frac{K}{\e^2} \dbE^{\dbQ^*}\Big[ \Big(\int_{t_0}^{t_0+\e}\big\{1  +  |Q_1(s)|+ |P_2(s)| + |X^*(s)| + |Z^*(s)|\big\}ds\Big)^2\Big]\\
&\leq K+ \frac{K}{\e} \dbE^{\dbQ^*}\Big[ \int_{t_0}^{t_0+\e}\{|Q_1(s)|^2+ |Z^*(s)|^2\}ds\Big] \leq K_{t_0},
\end{align*}
which implies the desired result.
\end{proof}

Recall that $\big(P_1, Q_1\big)$ and $\big(P_2, Q_2\big)$ are defined in \eqref{first ad} and \eqref{second ad} and set 
\begin{align}
	&\big(M_1(t), N_1(t)\big):= \big(P_1(t)X_1(t),  Q_1(t)X_1(t)-P_1(t)\delta \sigma (t)1_{[t_0, t_0+\e]}(t)\big), \q \tilde{N}_1(t):= Q_1(t) X_1(t);\label{11111}\\
	&\big(M_2(t), N_2(t)\big):= \big(P_2(t)\big(X_1(t)\big)^2,  Q_2(t)\big(X_1(t)\big)^2-2P_2(t)X_1(t)\delta \sigma (t)1_{[t_0, t_0+\e]}(t)\big),  \tilde{N}_2 (t):= Q_2(t) \big(X_1(t)\big)^2;\label{22222}\\
	&\big(\tilde{M}(t), \tilde{N}(t)\big):=\big( P_1(t)X_2(t), Q_1(t)X_2(t)\big). \label{33333}
\end{align}
Since we only have $S^4_{\dbF}(\dbQ^*)$  estimate  for $X_1$ and $S^2_{\dbF}(\dbQ^*)$ estimate  for $X_2$  in \autoref{state}, we need the following result to estimate $Q_1 X_1, Q_1 X_2$, and $Q_2 (X_1)^2$. These estimates will be used to prove \autoref{prop3.9} and \autoref{last}. 
\begin{proposition}\label{xx}
	Let \autoref{A1} hold. Then, we have that
	\begin{align}
		&(i)\q \dbE^{\dbQ^*}\Big[\mathop{\sup}_{t\in[0, T]}|M_1(t)|^4 + \Big(\int_{0}^{T}|N_1(t)|^2dt\Big)^2\Big] = O(\e^2), \q
		\dbE^{\dbQ^*}\Big[ \Big(\int_{0}^{T}|\tilde{N}_1(t)|^2dt\Big)^2\Big] = O(\e^2);\label{111}\\
		& (ii)\q\dbE^{\dbQ^*}\Big[\mathop{\sup}_{t\in[0, T]}|M_2(t)|^2 + \int_{0}^{T}|N_2(t)|^2dt\Big] = O(\e^2), \q \dbE^{\dbQ^*}\Big[ \int_{0}^{T}|\tilde{N}_2(t)|^2dt\Big] = O(\e^2); \label{222}
		\\
		& (iii)\q\dbE^{\dbQ^*}\Big[\mathop{\sup}_{t\in[0, T]}|\tilde{M}(t)|^2 + \int_{0}^{T}|\tilde{N}(t)|^2dt\Big] = O(\e^2)\label{333}.
	\end{align}
\end{proposition}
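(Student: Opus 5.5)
The main difficulty is that $Q_1$ and $Q_2$ are only known to lie in $L^\beta_{\dbF}(\dbQ^*)$ (together with $Q_1\in BMO(\dbQ^*)$), while $X_1$ and $X_2$ only have the moments given in \autoref{state}. A direct Hölder splitting of $\int_0^T|Q_1X_1|^2dt\le \sup_t|X_1|^2\int_0^T|Q_1|^2dt$ would need more moments of $X_1$ than we have. I therefore plan to avoid Hölder altogether. Instead, I will treat each pair $(M,N)$ as the solution of a linear BSDE under $\dbQ^*$ driven by $W^{q^*}$, and estimate $N$ through the BSDE itself, as in Step 2 of \autoref{importan-}.

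\textbf{Part (i).} First, $\sup_t|M_1|^4\le \|P_1\|^4_{S^\infty_{\dbF}(\dbQ^*)}\sup_t|X_1|^4$, so \autoref{wellpose of ad} and \autoref{state}(iii) give $O(\e^2)$ for the $M_1$ term. For $N_1$, I apply Itô's formula to $P_1X_1$ under $\dbQ^*$. Writing $dP_1=(\ldots)dt+Q_1dW^{q^*}$ and $dX_1=(b_xX_1+\delta\sigma 1_{[t_0,t_0+\e]}q^*)dt+\delta\sigma1_{[t_0,t_0+\e]}dW^{q^*}$, I get
\[
dM_1=\big(\alpha M_1+\varphi\big)dt+\big(Q_1X_1+P_1\delta\sigma 1_{[t_0,t_0+\e]}\big)dW^{q^*},
\]
with $\alpha$ bounded and $|\varphi|\le K\big(|X_1|+|Q_1\delta\sigma|1_{[t_0,t_0+\e]}+|q^*|1_{[t_0,t_0+\e]}\big)$. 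The martingale integrand differs from $N_1$ only by the term $2P_1\delta\sigma1_{[t_0,t_0+\e]}$, whose $L^2$ norm is $O(\e)$. So it suffices to bound the martingale integrand $\tilde N_1+P_1\delta\sigma 1$.

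I then apply Itô to $|M_1|^2$, localise with stopping times $\tau_n$, take the square, and use BDG and Young. This gives $\dbE^{\dbQ^*}[(\int_0^{\tau_n}|\cdot|^2)^2]$ bounded by
\[
K\,\dbE^{\dbQ^*}\Big[\sup|M_1|^4+\Big(\int|M_1||\varphi|\Big)^2\Big]+\tfrac12\,\dbE^{\dbQ^*}\Big[\Big(\int_0^{\tau_n}|\cdot|^2\Big)^2\Big].
\]
The cross term $\int|M_1||\varphi|$ is controlled by $\sup|M_1|$ times $\int_{t_0}^{t_0+\e}(|Q_1|+|q^*|)+\int|X_1|$. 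By Cauchy–Schwarz, $\int_{t_0}^{t_0+\e}|Q_1|\le\sqrt\e(\int|Q_1|^2)^{1/2}$. Combining this with $Q_1\in L^\beta_{\dbF}(\dbQ^*)$ for every $\beta$, with $q^*\in L^4_{\dbF}(\dbQ^*)$ from \autoref{q*}, and with $\sup|X_1|^4=O(\e^2)$, a Hölder split yields $O(\e^2)$. Absorbing the $\tfrac12$ term and letting $n\to\infty$ via Fatou gives the bound on $N_1$, and hence on $\tilde N_1$.

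The step I expect to be the main obstacle is the Hölder exponent bookkeeping. Only fourth moments of $X_1$ are available, so every product must use at most $|X_1|^4$ paired with high moments of $Q_1$ and bounded $P_1$. Where this is tight, I will fall back on the $BMO(\dbQ^*)$ property of $Q_1$, using the energy inequality to get arbitrary moments of $\int|Q_1|^2$.

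\textbf{Parts (ii) and (iii).} These follow the same scheme. For (ii), apply Itô to $P_2X_1^2$. Now the $M_2$ bound uses $P_2\in S^\infty_{\dbF}(\dbQ^*)$ and $\dbE^{\dbQ^*}[\sup|X_1|^4]=O(\e^2)$, which is only enough for second powers; accordingly, only square-integrable ($L^2$) estimates are claimed in (ii). For (iii), apply Itô to $P_1X_2$. Since $X_2$ has no martingale part, the martingale integrand is exactly $\tilde N=Q_1X_2$, and the drift contains $P_1(\delta b1_{[t_0,t_0+\e]}+\tfrac12b_{xx}X_1^2)$. The second-order Itô and BDG argument with $\dbE^{\dbQ^*}[\sup|X_2|^2]=O(\e^2)$ then closes as before.
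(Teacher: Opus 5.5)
Your proposal is correct and follows essentially the paper's route: It\^{o}'s formula turns $P_1X_1$, $P_2X_1^2$ and $P_1X_2$ into linear BSDEs driven by $W^{q^*}$ under $\dbQ^*$, and the $L^4$ (resp.\ $L^2$) a priori BSDE estimates close the argument --- the paper simply cites these from Briand, Delyon, Hu, Pardoux, and Stoica, while you re-derive them by hand through It\^{o} on the square, localization, BDG, Young and Fatou. Two small remarks: with the paper's sign convention $dP_1=(\cdots)dt-Q_1dW^{q^*}$, so the martingale integrand of $M_1$ is exactly $-N_1$ (your extra $2P_1\delta\sigma 1_{[t_0,t_0+\e]}$ is a harmless sign slip); and in (ii) the Hessian term of the $P_2$-equation puts $f_{zz}(Q_1X_1)^2$ and $Q_1X_1^2$ into the drift, which cannot be split by H\"older using only fourth moments of $X_1$ but are controlled exactly by the fourth-power bound on $\tilde N_1$ from (i), which is why (i) is needed at power four.
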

\begin{proof}
	(i) Applying It$\hat{\rm o}$'s formula to $\big(P_1(t)X_1(t)\big)_{t\in[0, T]}$, one can deduce that
	\begin{align*}
		M_1(t) = P_1(T)X_1(T) -\int_{t}^{T}\Big\{&f_y(s)M_1(s) + f_z(s)P_1(s)\delta \sigma(s)1_{[t_0, t_0+\e]}(s) + f_x(s)X_1(s)\\
		&
		-Q_1(s)\delta\sigma(s)1_{[t_0, t_0+\e]}(s)\Big\}ds + \int_{t}^{T}N_1(s)dW^{q^*}(s).
	\end{align*}
	It then follows from Briand, Delyon, Hu, Pardoux, and Stoica \cite{briand2003lp}, \autoref{q*}, H$\ddot{\rm o}$lder's inequality, and \autoref{wellpose of ad} that
	\begin{align*}
		&\dbE^{\dbQ^*}\Big[\mathop{\sup}_{s\in[0, T]}|M_1(s)|^4 + \Big(\int_{0}^{T}|N_1(s)|^2ds\Big)^2\Big] \\
		& \leq K\dbE^{\dbQ^*}\Big[\mathop{\sup}_{s\in [0, T]}|X_1(s)|^4+\Big(\int_{0}^{T}\big\{|q^*(s)|+|Q_1(s)|\big\}1_{[t_0, t_0+\e]}(s) ds\Big)^4\Big] \leq K\e^2,
	\end{align*}
	which implies the first part of \eqref{111}. In view of the definition of $N_1, \tilde{N}_1$, one can deduce the second part of \eqref{111}. 
	
	(ii) Applying It$\hat{\rm o}$'s formula to $\big(P_2(t)(X_1(t))^2\big)_{t\in[0, T]}$, one can deduce that
	\begin{align*}
		&M_2(t) = P_2(T)\big(X_1(T)\big)^2 - \int_{t}^{T} \bigg\{f_y(s)M_2(s) + 2 f_z(s)P_2(s)X_1(s)\delta\sigma(s)1_{[t_0, t_0+\e]}(s) + P_2(s)\big(\delta\sigma(s)\big)^21_{[t_0, t_0+\e]}(s) \\
		&+ \big[f_{xx}(s)+f_{yy}(s)\big(P_1(s)\big)^2+ 2f_{xy}(s)P_1(s)- b_{xx}(s)P_1(s) \big]\big(X_1(s)\big)^2 + f_{zz}(s)\big(Q_1(s)X_1(s)\big)^2 + 2f_{xz}(s)Q_1(s)\\
		&
		\cd\big(X_1(s)\big)^2+ 2f_{yz}(s)P_1(s)Q_1(s)\big(X_1(s)\big)^2 - 2Q_2(s)X_1(s)\delta\sigma(s)1_{[t_0, t_0+\e]}(s) \bigg\}ds + \int_{t}^{T}N_2(s)dW^{q^*}(s).
	\end{align*}
	%
	%
		Since 
		\begin{align*}
			&\dbE^{\dbQ^*}\Big[\Big(\int_{t_0}^{t_0+\e}|f_z(s)P_2(s)X_1(s)\delta\sigma(s)|ds\Big)^2\Big] \leq \dbE^{\dbQ^*}\Big[\mathop{\sup}_{s\in [0, T]}|X_1(s)|^2\Big(\int_{t_0}^{t_0+\e}|q^*(s)|ds\Big)^2\Big] \\
			& \leq \dbE^{\dbQ^*}\Big[\mathop{\sup}_{s\in [0, T]}|X_1(s)|^2\Big(\int_{t_0}^{t_0+\e}|q^*(s)|^2ds\Big)\Big]\e\leq \dbE^{\dbQ^*}\Big[\mathop{\sup}_{s\in [0, T]}|X_1(s)|^4\Big]^\frac{1}{2}\dbE^{\dbQ^*}\Big[\Big(\int_{t_0}^{t_0+\e}|q^*(s)|^2ds\Big)^2\Big]^\frac{1}{2}\e \\
			&\leq K\e^2,
		\end{align*}
		one can deduce  that the first part of \eqref{222} holds.
	Furthermore, in view of the definition of $N_2, \tilde{N}_2$, one can deduce the second part of \eqref{222}.

	(iii) Applying It$\hat{\rm o}$'s formula to $\big(P_1(t)X_2(t)\big)_{t\in [0, T]}$, one can deduce that
	\begin{align*}
		\tilde{M}(t) = P_1(T)X_2(T) - \int_{t}^{T}\Big\{&f_y(s) \tilde{M}(s)+ P_1(s)\delta b(s)1_{[t_0, t_0+\e]}(s)+ \frac{1}{2}P_1(s)b_{xx}(s)\big(X_1(s)\big)^2\\
		&+f_x(s)X_2(s)\Big\}ds + \int_{t}^{T}\tilde{N}(s)dW^{q^*}(s).
	\end{align*}
	It then follows from \autoref{A1}, \autoref{state}, and \autoref{wellpose of ad} that \eqref{333} holds.
	
	This completes the proof.
\end{proof}
\section{The estimate for $(\hat{Y}_1^\e, \hat{Z}_1^\e)$}\label{section8}
In this section, we give a $L^2_{\dbF}(\dbQ^*)$ estimate for $\big(\hat{Y}_1^\e, \hat{Z}_1^\e\big)$ in \eqref{y1}, which will be used to prove \autoref{last}.

Recall that $\big(\hat{Y}^\e(t), \hat{Z}^\e(t)\big) := \big(Y^\e(t) - Y^*(t), Z^\e(t) - Z^*(t) \big)$ and  we set
\begin{align}\label{y1}
	\big(\hat{Y}_1^\e(t), \hat{Z}_1^\e(t) \big):= \big(\hat{Y}^\e(t)- P_1(t)X_1(t), \hat{Z}^\e(t) + P_1(t)\delta\sigma(t) 1_{[t_0, t_0+\e]}(t) - Q_1(t)X_1(t)\big).
\end{align}
The following result is the estimate of $\big(\hat{Y}_1^\e(t), \hat{Z}_1^\e(t) \big)$.
\begin{proposition}\label{prop3.9}
	Let \autoref{A1} hold. Then,  we have that
	\begin{align*}
		\dbE^{\dbQ^*}\Big[\mathop{\sup}_{t\in [0, T]} |\hat{Y}_1^\e(t)|^2 + \int_{0}^{T}|\hat{Z}_1^\e(t)|^2dt \Big] = O(\e^{2}).
	\end{align*}
\end{proposition}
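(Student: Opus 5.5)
The plan is to write down the BSDE satisfied by $(\hat{Y}_1^\e,\hat{Z}_1^\e)$ under $\dbQ^*$ and show that it is a \emph{linear} BSDE in $\hat{Y}_1^\e$, with bounded coefficient $f_y$, driven by $W^{q^*}$. Its inhomogeneous term should have $L^2_{\dbF}(\dbQ^*)$-norm of order $\e$. Then the a priori estimate of Briand, Delyon, Hu, Pardoux, and Stoica \cite{briand2003lp} under $\dbQ^*$ gives the claim.

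\textbf{Step 1: the equation.} From the equation for $\hat Y^\e$ in \eqref{es} (with $\tau_n$ replaced by $T$) and the It\^o expansion of $M_1=P_1X_1$ from the proof of \autoref{xx}(i), $\hat{Y}_1^\e=\hat{Y}^\e-M_1$ satisfies, with $dW=dW^{q^*}+q^*dt$,
\begin{align*}
\hat{Y}_1^\e(t)=\hat{Y}_1^\e(T)-\int_t^T\Big\{f_y(s)\hat{Y}_1^\e(s)+\Gamma^\e(s)\Big\}ds+\int_t^T\hat{Z}_1^\e(s)dW^{q^*}(s).
\end{align*}
The terminal value is $\hat{Y}_1^\e(T)=h(X^\e(T))-h(X^*(T))-h_x(X^*(T))X_1(T)$. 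The remainder $\Gamma^\e$ collects the following pieces:
\begin{itemize}
\item the $x$-increment $e(s)-f_y\hat Y^\e-f_xX_1$, together with the $u$-jump $\delta f 1_{[t_0,t_0+\e]}$ evaluated along $(X^*,Y^*,Z^\e)$;
\item the $z$-increment $f(\cdot,Z^\e)-f(\cdot,Z^*)-f_z\hat{Z}^\e$, which equals $r(s)$;
\item the term $f_z\big(\hat Z^\e-Q_1X_1\big)$ appearing in the $W^{q^*}$ formulation, which cancels against the $f_zQ_1X_1$ coming from the adjoint equation \eqref{first ad};
\item the terms $-f_zP_1\delta\sigma 1_{[t_0,t_0+\e]}$ and $Q_1\delta\sigma1_{[t_0,t_0+\e]}$ from $M_1$.
\end{itemize}

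\textbf{Step 2: bound the terms in $\Gamma^\e$.} The terminal term is handled by Taylor's formula with bounded $h_{xx}$: it is bounded by $K(|\hat X_1^\e(T)|+|\hat X^\e(T)|^2)$, which is $O(\e^2)$ in $L^2(\dbQ^*)$ by \autoref{state}(ii),(iv). The $x,y$ increments have the form $f_x\hat X^\e_1+(\tilde f_x-f_x)\hat X^\e+\dots$. Their contributions are of order $\e^2$ by \autoref{state} and \autoref{importan-}, since $|\hat X^\e||\hat X^\e|$ and $|\hat Y^\e||\hat X^\e|$ are $O(\e)$ in $L^2$ via the fourth moments. The cost $f_y\hat Y^\e$ is absorbed into $f_y\hat Y_1^\e$ plus $f_yM_1$.

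The $u$-jump terms live on $[t_0,t_0+\e]$. Each is bounded by $K(1+|X^*|+|Z^*|+|Z^\e|+|q^*|+|Q_1|)1_{[t_0,t_0+\e]}$. For the $dt$-integral in $L^2(\dbQ^*)$ we use Cauchy--Schwarz, $\big(\int_{t_0}^{t_0+\e}\phi\big)^2\le \e\int_{t_0}^{t_0+\e}\phi^2$. This needs $\dbE^{\dbQ^*}\int_{t_0}^{t_0+\e}\phi^2=O(\e)$. We get it from \autoref{Lebes} for the $\dbQ^*$-integrable quantities $|Q_1|^2,|Z^*|^2,|q^*|^2$, and for $|Z^\e|^2$ we split $|Z^\e|^2\le2|Z^*|^2+2|\hat Z^\e|^2$ and use \autoref{importan-}.

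\textbf{Step 3: the quadratic term $r$ (main obstacle).} The term $r(s)$ is bounded by $\frac{\|f_{zz}\|_\i}{2}|\hat{Z}^\e|^2$ by \eqref{r}. Hence $\dbE^{\dbQ^*}\big[(\int_0^T|r|ds)^2\big]\le K\dbE^{\dbQ^*}\big[(\int_0^T|\hat Z^\e|^2)^2\big]=O(\e^2)$ by \autoref{importan-}; this is exactly why the fourth-order estimate there was needed. To justify it, one must check that $f_z$ is evaluated at $Z^*$, so that no unbounded factor multiplies $\hat Z^\e$ except through $q^*$, which has already been absorbed into $W^{q^*}$. Mixed terms such as $f_{xz}\hat X^\e\hat Z^\e$ are controlled by H\"older's inequality together with \autoref{state} and \autoref{importan-}.

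\textbf{Step 4: conclusion.} With $\dbE^{\dbQ^*}\big[|\hat Y_1^\e(T)|^2+(\int_0^T|\Gamma^\e|ds)^2\big]=O(\e^2)$, a localization by stopping times (as in Step 2 of \autoref{importan-}), followed by It\^o's formula applied to $e^{\beta t}|\hat{Y}_1^\e|^2$ and BDG under $\dbQ^*$, gives the stated $O(\e^2)$ bound.
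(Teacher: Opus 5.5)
Your proof is correct. It differs from the paper's in where the generator increment is linearized.

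\textbf{The paper's route.} The paper expands $f$ around the first-order point $(X^*+X_1,\,Y^*+P_1X_1,\,Z^*+Q_1X_1)$ through $U_1,U_2,U_3$. This produces two residuals: $(\tilde f^{2\e}_z-f_z)\hat Z_1^\e$, and $B_1$, which contains $(\tilde f^{3\e}_z-f_z)Q_1X_1$. There is also the $z$-shift by $P_1\delta\sigma$ inside $U_{12}$. Controlling these needs the fourth-moment bounds of \autoref{xx}(i) on $N_1$ and $\tilde N_1=Q_1X_1$, in addition to \autoref{importan-}.

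\textbf{Your route.} You keep the $z$-linearization at $Z^*$, and this has three consequences.
\begin{itemize}
\item $Q_1X_1$ enters only linearly, and after passing to $W^{q^*}$ it cancels exactly with the $f_zQ_1X_1$ term of \eqref{first ad}. Your description of this $f_z$ bookkeeping in Step 1 is somewhat tangled. Still, the net residual is indeed just $-f_zP_1\delta\sigma1_{[t_0,t_0+\e]}$, and you handle it correctly via \autoref{prop3.2} and \autoref{Lebes}.
\item The whole quadratic remainder is $r$, with $|r|\leq\frac{\|f_{zz}\|_\i}{2}|\hat Z^\e|^2$. The $L^2(\dbQ^*)$-norm of $\int_0^T|r|ds$ is therefore $O(\e)$ directly from the second estimate of \autoref{importan-}.
\item The only extra care is the cross terms coming from $f_{xz},f_{yz}$, because the $x,y$-increment in $e$ is taken at $Z^\e$. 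You flag this, and H\"older's inequality together with the fourth moments of $\hat X^\e,\hat Y^\e$ and of $\int_0^T|\hat Z^\e|^2ds$ does handle it.
\end{itemize}

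\textbf{Comparison.} Your argument is shorter and makes \autoref{xx} unnecessary for this proposition. The paper's shifted expansion buys uniformity: it is the template reused in \autoref{last}. There, bounding the $z$-remainder crudely by $|\hat Z^\e|^2$ would give only $O(\e)$ in $L^1(\dbQ^*)$, not $o(\e)$, since $\hat Z^\e\approx Q_1X_1-P_1\delta\sigma1_{[t_0,t_0+\e]}$. So in that lemma the quadratic terms must be expanded around the first-order approximation and matched with $P_2$ and $I$.
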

\begin{proof}
	By the definition of $\big(\hat{Y}_1^\e, \hat{Z}_1^\e\big)$ and note the fact that $$\big(Y^\e(s), Z^\e(s) + P_1(s)\delta \sigma (s)1_{[t_0, t_0+\e]}(s) \big) = \big(\hat{Y}_1^\e(s) + Y^*(s) + P_1(s)X_1(s), \hat{Z}_1^\e(s) + Z^*(s) + Q_1(s)X_1(s) \big),$$ we deduce that
	\begin{equation}\label{tilde-y}
		\begin{aligned}
			\hat{Y}_1^\e(t) =& h\big(X^\e(T)\big) - h\big(X^*(T)\big) - P_1(T)X_1(T) +P_1(T)X_1(T)- P_1(t)X_1(t)  \\
			&- \int_{t}^{T} \Big\{U_1(s)+U_2(s)+U_3(s)\Big\}ds + \int_{t}^{T}\Big\{\hat{Z}_1^\e(s)+Q_1(s)X_1(s) - P_1(s)\delta\sigma(s) 1_{[t_0, t_0+\e]}(s)\Big\}dW(s),
	\end{aligned}\end{equation}
	where 
\begin{align}
	&U_1(s):= f\big(s, X^\e(s), Y^\e(s), Z^\e(s), u^\e(s)\big)\nonumber \\
	&\q\q\q\q- f\big(s, X^*(s)+ X_1(s), Y^\e(s), Z^\e(s) + P_1(s)\delta \sigma (s)1_{[t_0, t_0+\e]}(s), u^*(s)\big),\label{u1}\\
	&U_2(s):= f\big(s, X^*(s)+ X_1(s), \hat{Y}_1^\e(s)+Y^*(s)+ P_1(s)X_1(s),  \hat{Z}_1^\e(s)+ Z^*(s) + Q_1(s)X_1(s), u^*(s)\big) \nonumber\\
	&\q\q\q\q- f\big(s, X^*(s)+ X_1(s), Y^*(s)+ P_1(s)X_1(s), Z^*(s)  + Q_1(s)X_1(s), u^*(s)\big),\nonumber\\
	&U_3(s) := f\big(s, X^*(s)+ X_1(s), Y^*(s)+ P_1(s)X_1(s), Z^*(s)  +Q_1(s)X_1(s), u^*(s)\big) - f(s).\nonumber
\end{align}
	By first-order Taylor's expansion, we obtain that
	\begin{align}\label{combine 0}
		U_2(s) = \tilde{f}^{2\e}_y(s) \hat{Y}_1^\e(s) + \tilde{f}^{2\e}_z(s) \hat{Z}_1^\e(s)=\tilde{f}^{2\e}_y(s) \hat{Y}_1^\e(s) + \big(\tilde{f}^{2\e}_z(s) - f_z(s)\big) \hat{Z}_1^\e(s) + f_z(s)\hat{Z}_1^\e(s),
	\end{align}
	where
	\begin{align*}
		&\tilde{f}^{2\e}_y(s) := \int_{0}^{1}f_y\big(s, X^*(s)+X_1(s), Y^*(s)+P_1(s)X_1(s)+\theta \hat{Y}_1^\e(s), Z^*(s)+ Q_1(s)X_1(s)+ \theta \hat{Z}_1^\e(s), u^*(s)\big)d\theta,\\
		&\tilde{f}^{2\e}_z(s) := \int_{0}^{1}f_z\big(s, X^*(s)+X_1(s), Y^*(s)+P_1(s)X_1(s)+\theta \hat{Y}_1^\e(s), Z^*(s)+ Q_1(s)X_1(s)+ \theta \hat{Z}_1^\e(s), u^*(s)\big)d\theta.
	\end{align*}
	In addition, applying It$\hat{\rm o}$'s formula to $\big(P_1(s)X_1(s)\big)_{s\in[t, T]}$ and first-order Taylor's expansion to $U_3$, we deduce that
	\begin{equation}\label{combine 1}
		\begin{aligned}
			&P_1(T)X_1(T) - P_1(t)X_1(t) - \int_{t}^{T}U_3(s)ds +\int_{t}^{T}\big\{Q_1(s)X_1(s)-P_1(s)\delta \sigma(s)1_{[t_0, t_0+\e]}(s)\big\}dW(s) \\
			&= -\int_{t}^{T}B_1(s)ds,
		\end{aligned}
	\end{equation}
	where 
	\begin{align*}
		&B_1(s) := \big(  \tilde{f}^{3\e}_x(s) - f_x(s) \big)X_1(s)
		+  \big(  \tilde{f}^{3\e}_y(s) -f_y(s) \big)P_1(s)X_1(s) 
		+  \big(\tilde{f}^{3\e}_z(s)-f_z(s)  \big)Q_1(s)X_1(s)  \nonumber \\
		&\q\q\q\q+Q_1(s)\delta \sigma(s)1_{[t_0, t_0+\e]}(s),\label{b1}\\
		&\tilde{f}^{3\e}_x(s) := \int_{0}^{1} f_x\big(s, X^*(s)+ \theta X_1(s), Y^*(s)+ \theta P_1(s)X_1(s), Z^*(s) + \theta Q_1(s)X_1(s), u^*(s)\big)d\theta,
	\end{align*}
	and $\tilde{f}^{3\e}_y(s), \tilde{f}^{3\e}_z(s)$ are defined similarly.
	Combining \eqref{combine 0} and \eqref{combine 1}, \eqref{tilde-y} can be changed to 
	\begin{equation}\label{tilde-y-}
		\begin{aligned}
			\hat{Y}_1^\e(t) = H^\e_1(T)  
			- \int_{t}^{T} \Big\{\tilde{f}^{2\e}_y(s) \hat{Y}_1^\e(s) + J_1(s)
			\Big\}ds  
			+ \int_{t}^{T}\hat{Z}_1^\e(s)dW^{q^*}(s),
	\end{aligned}\end{equation}
	where
	\begin{align*}
		H^\e_1(T):= h\big(X^\e(T)\big) - h\big(X^*(T)\big) - P_1(T)X_1(T),\q
		J_1(s):= \big(\tilde{f}^{2\e}_z(s) - f_z(s)\big) \hat{Z}_1^\e(s) +U_1(s)+B_1(s).
	\end{align*}
	It then follows from Briand, Delyon, Hu, Pardoux, and Stoica \cite{briand2003lp} that
	\begin{equation}\label{3.51}
		\begin{aligned}
			&\dbE^{\dbQ^*}\Big[\mathop{\sup}_{s\in [0, T]}|\hat{Y}^\e_1(s)|^2 + \int_{0}^{T}|\hat{Z}^\e_1(s)|^2ds\Big]\\
			&\leq K\dbE^{\dbQ^*}\Big[\big|H^\e_1(T)\big|^2 + \Big(\int_{0}^{T} \big|\tilde{f}^{2\e}_z(s) - f_z(s)\big| |\hat{Z}_1^\e(s)| +|U_1(s)|+|B_1(s)|ds\Big)^2\Big].
	\end{aligned}\end{equation}
	Now we estimate the right side of \eqref{3.51}. 
	
		{\bf Step 1:} we first estimate the term $\dbE^{\dbQ^*}[|H^\e_1(T)|^2]$.
	By Taylor's expansion, \autoref{A1}, and \autoref{state}, we deduce that
	\begin{equation}\label{h1}
		\begin{aligned}
			\dbE^{\dbQ^*}\Big[\big|H^\e_1(T)\big|^2\Big] \leq K\dbE^{\dbQ^*}\Big[\|h_{xx}\|_\i^2|\hat{X}^\e(T)|^4 + \|h_x\|_\i^2|\hat{X}_1^\e(T)|^2\Big] \leq K\e^2.
	\end{aligned}\end{equation}

	{\bf Step 2:} for the term $\dbE^{\dbQ^*}[\big(\int_{0}^{T}\big|\tilde{f}^{2\e}_z(s) - f_z(s)\big|\cd | \hat{Z}_1^\e(s)|ds\big)^2]$,  by \autoref{A1}, Young's inequality, \autoref{wellpose of ad}, \autoref{state}, and  \autoref{importan-},  one has that 
	\begin{equation}\label{c1}
		\begin{aligned}
			&\dbE^{\dbQ^*}\Big[\Big(\int_{0}^{T}\big|\tilde{f}^{2\e}_z(s) - f_z(s)\big| \cd |\hat{Z}_1^\e(s)|ds\Big)^2\Big]\\
			&\leq K\dbE^{\dbQ^*}\Big[\Big(\int_{0}^{T}\big\{|X_1(s)| + |\hat{Y}^\e(s)| +|\hat{Z}^\e(s)| +|N_1(s)| + 1_{[t_0, t_0+\e]}(s) \big\}
			\cd \big\{|\hat{Z}^\e(s)| + |N_1(s)|  \big\}ds\Big)^2\Big]\\
			& \leq K\bigg\{\dbE^{\dbQ^*}\Big[\mathop{\sup}_{s\in[0, T]} |X_1(s)|^4\Big] + \dbE^{\dbQ^*}\Big[\mathop{\sup}_{s\in[0, T]} |\hat{Y}^\e(s)|^4\Big] + \dbE^{\dbQ^*}\Big[\Big(\int_{0}^{T} |\hat{Z}^\e(s)|^2ds\Big)^2\Big] \\
			&\q\q\q+ \dbE^{\dbQ^*}\Big[\Big(\int_{0}^{T} |N_1(s)|^2dt\Big)^2\Big]+  \dbE^{\dbQ^*}\Big[\Big(\int_{t_0}^{t_0+\e} |\hat{Z}^\e(s)|ds\Big)^2\Big] + \dbE^{\dbQ^*}\Big[\Big(\int_{t_0}^{t_0+\e} |N_1(s)|ds\Big)^2\Big]\bigg\} 
			\leq K_{t_0}\e^{2},
		\end{aligned}
	\end{equation}
	where the estimate of $N_1$ is given in \autoref{xx}.\\
	{\bf Step 3:} for the term $\dbE^{\dbQ^*}[\big(\int_{0}^{T}|U_1(s)|ds\big)^2]$, 
	set 
	\begin{align*}
		&U_{11}(s):= f\big(s, X^\e(s), Y^\e(s), Z^\e(s), u^*(s)\big) - f\big(s, X^*(s)+X_1(s), Y^\e(s), Z^\e(s), u^*(s)\big),\\
		&U_{12}(s):= f\big(s, X^\e(s), Y^\e(s), Z^\e(s), u(s)\big) - f\big(s, X^*(s)+X_1(s), Y^\e(s), Z^\e(s)+P_1(s)\delta \sigma(s), u(s)\big),\\
		&U_{13}(s):= f\big(s, X^*(s)+X_1(s), Y^\e(s), Z^\e(s)+P_1(s)\delta \sigma(s), u(s)\big) \\
		&\q\q\q\q- f\big(s, X^*(s)+X_1(s), Y^\e(s), Z^\e(s)+P_1(s)\delta \sigma(s), u^*(s)\big).
	\end{align*}
	One has that
	\begin{align*}
		U_1(s) = U_{11}(s)1_{[0, t_0) \bigcup (t_0+\e, T]}(s) + \big\{U_{12}(s)+U_{13}(s)\big\}1_{[t_0, t_0+\e]}(s).
	\end{align*}
	On one hand, by \autoref{A1} and \autoref{state}, we obtain that
	\begin{align}\label{com1}
		\dbE^{\dbQ^*}\Big[\Big(\int_{0}^{T}|U_{11}(s)|1_{[0, t_0) \bigcup (t_0+\e, T]}(s) ds\Big)^2\Big]
		\leq K\dbE^{\dbQ^*}\Big[\mathop{\sup}_{s\in [0, T]}|X^\e(s)-X^*(s)-X_1(s)|^2\Big] \leq K\e^2.
	\end{align}
	On the other hand, thanks to \autoref{A1}, \autoref{state},  \autoref{system-wellposedness}, H$\ddot{\rm o}$lder's inequality, \autoref{importan-}, and \autoref{Lebes}, we deduce that 
	\begin{equation}\label{com2}
		\begin{aligned}
			&\dbE^{\dbQ^*}\Big[\Big(\int_{t_0}^{t_0+\e}|U_{12}(s)|ds\Big)^2\Big] \\
			&\leq K\bigg\{\dbE^{\dbQ^*}\Big[\mathop{\sup}_{s\in [0, T]}|X^\e(s)-X^*(s)-X_1(s)|^2\Big] + \dbE^{\dbQ^*}\Big[\int_{t_0}^{t_0+\e}\big\{1+|Z^\e(s)|^2\big\}ds\Big]\e
			\bigg\}\\
			&\leq K \bigg\{\e^2 + \dbE^{\dbQ^*}\Big[\int_{0}^{T}|\hat{Z}^\e(s)|^2 ds\Big] \e+ \dbE^{\dbQ^*} \Big[\int_{t_0}^{t_0+\e}|Z^*(s)|^2 ds\Big] \e\bigg\} \leq K_{t_0}\e^{2}.
		\end{aligned}
	\end{equation}
	In addition, it follows from \autoref{A1} and \autoref{system-wellposedness} that
	\begin{align}\label{com3}
		\dbE^{\dbQ^*}\Big[\Big(\int_{t_0}^{t_0+\e}|U_{13}(s)|ds\Big)^2\Big] \leq K\Big\{1 + \dbE^{\dbQ^*}\Big[\mathop{\sup}_{s\in [0, T]}|X^*(s)|^2\Big] + \dbE^{\dbQ^*}\Big[\mathop{\sup}_{s\in [0, T]}|X_1(s)|^2\Big]\Big\}\e^2 \leq K\e^2.
	\end{align}
	Combining \eqref{com1}, \eqref{com2}, and \eqref{com3}, we obtain that 
	\begin{align}\label{c2}
		\dbE^{\dbQ^*}\Big[\Big(\int_{0}^{T}|U_1(s)|ds\Big)^2\Big] \leq K\e^{2}.
	\end{align}

	{\bf Step 4:} for the term $\dbE^{\dbQ^*}\big[\big(\int_{0}^{T}B_1(s) ds\big)^2\big]$, by Taylor's expansion, \autoref{A1}, Young's inequality, H$\ddot{\rm o}$lder's inequality, \autoref{state}, \autoref{wellpose of ad}, and \autoref{Lebes}, we deduce that 
	\begin{equation}\label{c3}
		\begin{aligned}
			&\dbE^{\dbQ^*}\Big[\Big(\int_{0}^{T}|B_1(s)| ds\Big)^2\Big]\\
			&\leq K\bigg\{\dbE^{\dbQ^*}\Big[\Big(\int_{0}^{T}\Big\{\Big(|X_1(s)| + |N_1(s)| + 1_{[t_0, t_0+\e]}(s)\Big)^2 + |Q_1(s)|1_{[t_0, t_0+\e]}(s) \Big\}ds\Big)^2\Big] \bigg\}
			\leq K_{t_0}\e^2.
	\end{aligned}\end{equation}

	Combining step 1-4, we obtain the desired result. This completes the proof.
\end{proof}

\section{The estimate for $Y^\e(0) - Y^*(0) - \hat{Y}(0)$} \label{section6}
In this section, we study the estimate of $Y^\e(0) - Y^*(0) - \hat{Y}(0)$. 
Recall that $\hat{Y}$ is defined in \eqref{va} and $\big(\hat{Y}^\e(t), \hat{Z}^\e(t)\big) := \big(Y^\e(t)- Y^*(t), Z^\e(t)- Z^*(t) \big)$. In the following, we present the estimate of $Y^\e(0)-Y^*(0)-\hat{Y}(0)$.
\begin{lemma}\label{last}
Let \autoref{A1} hold. Then, we have that
\begin{align}\label{end}
Y^{\e}(0)-Y^*(0) - \hat{Y}(0) = o(\e).
\end{align}
\end{lemma}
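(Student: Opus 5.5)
We follow Hu's second-order expansion, now carried out under $\dbQ^*$. Set
\[
\hat{Y}_2^\e(t):=\hat{Y}^\e(t)-P_1(t)\big(X_1(t)+X_2(t)\big)-\tfrac12 P_2(t)\big(X_1(t)\big)^2-\hat{Y}(t),
\]
and let $\hat{Z}_2^\e$ be the corresponding combination. Its first part is $\hat{Z}^\e-N_1-\tilde N-\tfrac12 N_2$, where $N_1,N_2,\tilde N$ are given by \eqref{11111}--\eqref{33333}. The remaining part is $-\hat{Z}$, together with the correction terms coming from the $\delta\sigma$ parts of It\^o's formula. Since $X_1(0)=X_2(0)=0$, we have $\hat{Y}_2^\e(0)=Y^\e(0)-Y^*(0)-\hat{Y}(0)$, so \eqref{end} is equivalent to $\dbE^{\dbQ^*}[|\hat{Y}_2^\e(0)|]=o(\e)$.

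To obtain the equation for $\hat Y_2^\e$, apply It\^o's formula to $P_1(X_1+X_2)$ and to $P_2X_1^2$, using the adjoint equations \eqref{first ad}--\eqref{second ad}. Subtract these from the equation for $\hat{Y}^\e$ and also subtract \eqref{va}, writing every stochastic integral against $dW^{q^*}$. The result is a linear BSDE
\[
\hat{Y}_2^\e(t)=H_2^\e(T)-\int_t^T\big\{\tilde f_y(s)\hat{Y}_2^\e(s)+(\tilde f_z(s)-f_z(s))\hat{Z}_2^\e(s)+J_2(s)\big\}ds+\int_t^T\hat{Z}_2^\e(s)dW^{q^*}(s).
\]
Here the terminal term is $H_2^\e(T)=h(X^\e(T))-h(X^*(T))-h_x(X^\e_1+X_2)(T)-\tfrac12h_{xx}X_1(T)^2$. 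By Taylor's formula and \autoref{state}(vi), $\dbE^{\dbQ^*}[|H_2^\e(T)|]=o(\e)$; this also uses the uniform continuity of $h_{xx}$ along $X^*+\theta\hat X^\e$, handled by dominated convergence as in Step 3 of \autoref{state}.

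The driver $J_2$ collects three groups of terms:
\begin{itemize}
\item[(a)] the second-order Taylor remainder of $f$ in $(x,y,z)$ on $[0,T]\setminus[t_0,t_0+\e]$;
\item[(b)] on $[t_0,t_0+\e]$, the difference between $f(\cdot,u^\e)$ evaluated at the perturbed arguments and the term $f(s,X^*,Y^*,Z^*-P_1\delta\sigma,u)-f(s)$ appearing in $I$;
\item[(c)] cross terms such as $f_z P_2X_1\delta\sigma$ and $Q_2X_1\delta\sigma$ on the spike interval.
\end{itemize}
The generators of the adjoint equations were chosen exactly so that the linear terms in $X_1+X_2$ and the quadratic terms in $X_1^2$ cancel. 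What remains in (a) is therefore a product of a difference such as $D^2f(\text{intermediate point})-D^2f(s)$ with squares of $\hat X^\e$, $\hat Y^\e$, $\hat Z^\e$ (or of $X_1$, $P_1X_1$, $\tilde N_1$), plus products of second derivatives with differences like $\hat Y^\e-P_1X_1=\hat Y_1^\e$ and $\hat Z_1^\e$.

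Since the equation is linear, we estimate it with a first-moment argument rather than in $L^2$. Write $\hat Y_2^\e(0)=\dbE^{\dbQ^*}\big[\Gamma(T)H_2^\e(T)-\int_0^T\Gamma(s)\{(\tilde f_z-f_z)\hat Z_2^\e+J_2\}ds\big]$ with the bounded factor $\Gamma(s)=e^{-\int_0^s\tilde f_y}$. This reduces the claim to showing that $\dbE^{\dbQ^*}\int_0^T|J_2|ds$ and $\dbE^{\dbQ^*}\int_0^T|\tilde f_z-f_z||\hat Z_2^\e|ds$ are $o(\e)$.

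For (a), bound the quadratic remainders by Cauchy--Schwarz, using the $O(\e^2)$ fourth-moment bounds from \autoref{state} and \autoref{importan-} and the bounds for $N_1,\tilde N_1$ from \autoref{xx}. Terms that are products of an $O(\e)$ quantity with an $O(\e)$ quantity, for example $\|f_{zz}\|_\i|\hat Z^\e||\hat Z_1^\e|$ estimated with \autoref{prop3.9}, are $O(\e^{3/2})$ or better. The terms involving a difference of second derivatives are $o(\e)$ by dominated convergence, since the boundedness of $D^2f$ gives a dominating function and the integrands converge in $\dbQ^*$-probability. For (b) and (c) on $[t_0,t_0+\e]$, factor out $\e$ and apply \autoref{Lebes} along a full-measure set of $t_0$. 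Whatever is left in (b) consists of $\hat Z^\e_1$-type terms, bounded by $\sqrt{\e}\,\|\hat Z_1^\e\|_{L^2(\dbQ^*)}=O(\e^{3/2})$, while (c) is controlled through \autoref{xx}(ii).

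The main obstacle is the term $(\tilde f_z-f_z)\hat{Z}_2^\e$ together with the $z$-second-order remainder. Because $f_z$ grows linearly in $z$ and $Z^*$ is only exponentially integrable, we cannot use BMO arguments. Instead we write $|\tilde f_z-f_z|\le \|f_{zz}\|_\i(|\hat Z^\e|+|\tilde N_1|+\dots)+\dots$ and bound $|\hat Z_2^\e|\le |\hat Z_1^\e|+|N_2|+|\tilde N|+|\hat Z|$. Every product in the resulting expansion then pairs two factors, each $O(\e)$ in $L^2(\dbQ^*)$, by \autoref{importan-}, \autoref{prop3.9}, \autoref{xx}, and the estimate for $(\hat Y,\hat Z)$. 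The only exceptions live on the spike interval, where \autoref{Lebes} supplies the extra smallness. Summing all the pieces gives $o(\e)$.
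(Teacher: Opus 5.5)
Your proposal is correct and follows essentially the paper's route: the remainder $\hat Y^\e-P_1(X_1+X_2)-\frac12P_2X_1^2-\hat Y$ (the paper's $\hat{\mathcal Y}_2^\e$) solves a linear BSDE driven by $W^{q^*}$, $|\hat{\mathcal Y}_2^\e(0)|$ is bounded through the bounded factor $\Gamma$ and $\dbE^{\dbQ^*}$, and each remainder is shown to be $o(\e)$ via \autoref{state}, \autoref{importan-}, \autoref{prop3.9}, \autoref{xx}, \autoref{Lebes} and dominated convergence. Your linearization in the full difference, with $\tilde f_y$ in $\Gamma$ instead of the paper's $J_2$ built on $\hat Y^\e-O_1$, is only cosmetic. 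One bookkeeping correction: $|\tilde f_z-f_z|$ contains $\hat Z^\e$, $\tilde N_1=Q_1X_1$ and $X_1$, which are only $O(\e^{1/2})$ in $L^2_{\dbF}(\dbQ^*)$, so those products are $O(\e^{3/2})$ rather than products of two $O(\e)$ factors, which is still enough.
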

\begin{proof}
	Recall that $M_2, \tilde{M}$ and $\tilde{N}_2, \tilde{N}$ are defined in \eqref{22222} and \eqref{33333}, respectively. Set 
	\begin{equation}\label{o}
	 \begin{aligned}
	 	&O_1(s) := P_1(s)\big(X_1(s)+X_2(s)\big)+\frac{1}{2}P_2(s)\big(X_1(s)\big)^2 = P_1(s) X_1(s)+ \tilde{M}(s)+\frac{1}{2}M_2(s), \\
	 	& O_2(s):=  - Q_1(s)\big(X_1(s)+X_2(s)\big)- \frac{1}{2}Q_2(s)\big(X_1(s)\big)^2+P_2(s)\delta \sigma(s)X_1(s)1_{[t_0, t_0+\e]}(s)\\
	 	&\q\q\q
	 	= - Q_1(s)X_1(s) - \tilde{N}(s) - \frac{1}{2}\tilde{N}_2(s)+P_2(s)\delta \sigma(s)X_1(s)1_{[t_0, t_0+\e]}(s),\\
	 	&\hat{Y}_2^\e(s) :=\hat{Y}^\e(s) - O_1(s), \q \hat{Z}_2^\e(s):=\hat{Z}^\e(s)  + P_1(s)\delta\sigma(s) 1_{[t_0, t_0+\e]}(s) +O_2(s),
	 	\\
	 &\hat{\mathcal{Y}}_2^\e(s) := \hat{Y}_2^\e(s)-\hat{Y}(s)   ,\q \hat{\mathcal{Z}}_2^\e(s):=\hat{Z}_2^\e(s) - \hat{Z}(s).
	 \end{aligned}\end{equation}
It then follows that
 \begin{equation}\label{tilde-y2}
 	\begin{aligned}
 		\hat{\mathcal{Y}}_2^\e(t) =& H^\e_2(T) +O_1(T)- O_1(t)  - \int_{t}^{T} \Big\{V_1(s)+V_2(s)+V_3(s)-f_y(s)\hat{Y}(s) - f_z(s)\hat{Z}(s)-I(s)\Big\}ds\\
 		& + \int_{t}^{T}\Big\{\hat{\mathcal{Z}}_2^\e(s)- P_1(s)\delta\sigma(s) 1_{[t_0, t_0+\e]}(s) - O_2(s)\Big\}dW(s),
 \end{aligned}\end{equation}
 where $I(s)$ is defined in \eqref{i}, and
 \begin{align*}
 	&H^\e_2(T):= h\big(X^\e(T)\big) - h\big(X^*(T)\big) - O_1(T),\\
 	&V_1(s):= f\big(s, X^\e(s), Y^\e(s), Z^\e(s), u^\e(s)\big) \\
 	&\q\q\q\q- f\big(s, X^*(s)+ X_1(s)+X_2(s), Y^*(s)+\hat{Y}_2^\e(s)+ O_1(s), Z^*(s) + \hat{Z}_2^\e(s) - O_2(s), u^*(s)\big),\\
 	&V_2(s):= f\big(s, X^*(s)+ X_1(s)+X_2(s), Y^*(s)+\hat{Y}_2^\e(s)+ O_1(s), Z^*(s) + \hat{Z}_2^\e(s) - O_2(s), u^*(s)\big) \\
 	&\q\q\q\q- f\big(s, X^*(s)+ X_1(s)+X_2(s), Y^*(s)+ O_1(s), Z^*(s)  -O_2(s), u^*(s)\big),\\
 	&V_3(s) := f\big(s, X^*(s)+ X_1(s)+X_2(s), Y^*(s)+ O_1(s), Z^*(s)  -O_2(s), u^*(s)\big) - f(s).
 \end{align*}
	Applying It$\hat{\rm o}$'s formula to $\big(O_1(s)\big)_{s\in[t, T]}$, first-order Taylor's expansion to $V_2$,  and second-order Taylor's expansion to $h, V_3$, as well as by a similar deduction to get \eqref{tilde-y-}, one can deduce that
	\begin{equation}
	\begin{aligned}
	\hat{\mathcal{Y}}_2^\e(t) &= H^\e_2(T)  
	- \int_{t}^{T} \Big\{f^{2\e}_y(s) \hat{Y}_2^\e(s) - f_y(s)\hat{Y}(s) + f^{2\e}_z(s) \hat{Z}_2^\e(s) - f_z(s)\hat{Z}(s) \\
	& \q\q\q\q\q\q\q+ V_1(s) + J_3(s) + J_4(s) + J_5(s)
	\Big\}ds  
	+ \int_{t}^{T}\hat{\mathcal{Z}}_2^\e(s)dW(s)\\
	&= H^\e_2(T)  
	- \int_{t}^{T} \Big\{f_y(s)\hat{\mathcal{Y}}_2^\e(s) + J_2(s)+ V_1(s)+ J_3(s) + J_4(s) + J_5(s)\Big\}ds  
	+ \int_{t}^{T}\hat{\mathcal{Z}}_2^\e(s)dW^{q^*}(s),
	\end{aligned}\end{equation}
	where 
	\begin{align*}
	&J_2(s):=\big(f^{2\e}_y(s) - f_y(s)\big)\hat{Y}^\e_2(s) + \big(f^{2\e}_z(s) - f_z(s)\big)\hat{Z}^\e_2(s)
	\\
	&J_3(s):= -\big[f\big(s, X^*(s), Y^*(s), Z^*(s)-P_1(s)\delta \sigma(s), u(s) \big) - f(s)\big]1_{[t_0, t_0+\e]}(s),
	\\
	&J_4(s):= \frac{1}{2}[X_1(s), P_1(s)X_1(s), \tilde{N}_1(s)]\big(D^{2\e}f(s) - D^2 f(s)\big)[X_1(s), P_1(s)X_1(s), \tilde{N}_1(s)]^\top,
	%
	%
	\\
	&J_5(s):= -f_z(s)P_2(s)\delta \sigma(s)X_1(s)1_{[t_0, t_0+\e]}(s) + Q_2(s)\delta \sigma(s)X_1(s)1_{[t_0, t_0+\e]}(s) + f^\e_{xx}(s)X_1(s)X_2(s)\\
	&\q\q\q\q+\frac{1}{2}f_{xx}^\e(s)\big(X_2(s)\big)^2 + \frac{1}{2} f_{yy}^\e(s) \tilde{O}_1(s)+ \frac{1}{2}f_{zz}^\e(s)\tilde{O}_2(s) + f^\e_{xy}(s)\tilde{O}_3(s)- f^\e_{xz}(s)\tilde{O}_4(s) -f^\e_{yz}(s)\tilde{O}_5(s) ,
	\\
	&\tilde{O}_1(s):=\big(O_1(s)\big)^2-\big(P_1(s)X_1(s)\big)^2 = 2P_1(s)X_1(s)\big(\tilde{M}(s)+\frac{1}{2}M_2(s)\big) + \big(\tilde{M}(s)+\frac{1}{2}M_2(s)\big)^2,\\
	&\tilde{O}_2(s):= \big(O_2(s)\big)^2-\big(Q_1(s)X_1(s)\big)^2= 2Q_1(s)X_1(s)\big\{\tilde{N}(s) + \frac{1}{2}\tilde{N}_2(s) - P_2(s)\delta\sigma(s) X_1(s)1_{[t_0, t_0+\e]}(s)\big\} \\
	&\q\q\q\q
	+ \big\{\tilde{N}(s) + \frac{1}{2}\tilde{N}_2(s) - P_2(s)\delta\sigma(s) X_1(s)1_{[t_0, t_0+\e]}(s)\big\}^2, \\
	& \tilde{O}_3(s):=O_1(s)\big(X_1(s)+X_2(s)\big)-P_1(s)\big(X_1(s)\big)^2 \\
	&\q\q\q= P_1(s)X_1(s)X_2(s)+\big(\tilde{M}(s)+\frac{1}{2}M_2(s)\big)\big(X_1(s)+X_2(s)\big), \\
	 &\tilde{O}_4(s):=O_2(s)\big(X_1(s)+X_2(s)\big) + Q_1(s)\big(X_1(s)\big)^2\\
	 &\q\q\q
	 =-Q_1(s)X_1(s)X_2(s) - \big\{\tilde{N}(s) + \frac{1}{2}\tilde{N}_2(s) - P_2(s)\delta\sigma(s) X_1(s)1_{[t_0, t_0+\e]}(s)\big\}\big(X_1(s)+X_2(s)\big),
	\\
	&\tilde{O}_5(s):=O_1(s)O_2(s)+P_1(s)Q_1(s)\big(X_1(s)\big)^2\\
	&\q\q\q
	= P_1(s)X_1(s) \big\{-\tilde{N}(s) - \frac{1}{2}\tilde{N}_2(s) + P_2(s)\delta\sigma(s) X_1(s)1_{[t_0, t_0+\e]}(s)\big\}+ \big(\tilde{M}(s)+\frac{1}{2}M_2(s)\big)O_2(s),\\
	&f_y^{2\e}(s):= \int_{0}^{1}f_y\big(s, X^*(s)+X_1(s)+X_2(s), Y^*(s)+O_1(s)+\theta \hat{Y}^\e_2(s), Z^*(s)-O_2(s)+\theta \hat{Z}^\e_2(s), u^*(s)\big)d\theta,
	 \end{align*}
    \begin{equation}
    	\begin{aligned}
    		&D^{2\e}f(s) :=
    		\begin{bmatrix}
    			f^\e_{xx}(s) & f^\e_{ xy}(s) & f^\e_{xz}(s)\\ 
    			f^\e_{xy}(s) & f^\e_{ yy}(s) & f^\e_{yz}(s)\\
    		f^\e_{xz}(s) & f^\e_{yz}(s) & f^\e_{zz}(s)
    		\end{bmatrix},\,\,\,\,\,\,\,\,\,\,\,\,\,\,\,\,\,\,\,\,\,\,\,\,\,\,\,\,\,\,\,\,\,\,\,\,\,\,\,\,\,\,\,\,\,\,\,\,\,\,\,\,\,\,\,\,\,\,\,\,\,\,\,\,\,\,\,\,\,\,\,\,\,\,\,\,\,\,\,\,\,\,\,\,\,\,\,\,\,\,\,\,\,\,\,\,\,\,\,\,\,\,\,\,\,\,\,\,\,\,\,\,\,\,\,\,\,\,\,\,\,\,\,\,\,\,\,\,\,\,\,\,\,\,\,\,\,\,\,\,\,\,\,\,\,\,\,\,\,\,\,\,\,\,\,\,\,\,\,\,\,\,\,\,\,\,\,\,\,\,\,\,\,\,\,\,\,\,\,\,\,\,\,\,\,\,\,\,\,\\
    		&f_{xx}^\e(s):=2\int_{0}^{1} (1-\theta) f_{xx}\big(s, X^*(s)+\theta\big(X_1(s)+X_2(s)\big), Y^*(s)+\theta O_1(s), Z^*(s)-\theta O_2(s), u^*(s)\big)d\theta, \,\, \,\,\,\,\,\,\,\, \,\,\,\,\,\,\,\, \,\,\,\,\,\,\,\, \,\,\,\,\,\,\,\, \,\,\,\,\,\,\,\, \,\,\,\,\,\,\,\, \,\,\,\,\,\,\,\, \,\,\,\,\,\,\,\, \,\,\,\,\,\,\,\,\,\,\,\,\,\,\,\,\,\,\,\,\,\,\,\,\,\,\,\,\,\,\,\,\,\,\,\,\,\,\,\,\,\,\,\,   
    	\end{aligned}
    \end{equation}
and $f^{2\e}_{z}(s), f_{yy}^{\e}(s),f_{zz}^\e(s),f_{xy}^\e(s),f_{xz}^\e(s),f_{yz}^\e(s)$ are defined similarly.
Furthermore, define 
\begin{align}\label{ga}
\Gamma(t):= \exp\Big\{\int_{0}^{t}-f_y(s)ds\Big\}.
\end{align}
 Then,
applying It$\hat{\rm o}$'s formula to $\big\{\Gamma(t)\hat{\mathcal{Y}}_2^\e(t)\big\}_{t\in[0, T]}$, taking expectation $\dbE^{\dbQ^*}[\cd]$, and by \autoref{A1}, we obtain that
\begin{align}\label{aim}
|\hat{\mathcal{Y}}^\e_2(0)|&\leq \dbE^{\dbQ^*}\Big[\Big|\Gamma(T)H^\e_2(T) - \int_{0}^{T}\Gamma(s)\Big\{J_2(s)+V_1(s)+J_3(s)+J_4(s)+J_5(s)\Big\}ds\Big|\Big]\nonumber\\
&\leq K \dbE^{\dbQ^*}\Big[|H^\e_2(T)| + \int_{0}^{T}\Big\{|J_2(s)|+|V_1(s)+J_3(s)|+|J_4(s)|+|J_5(s)|\Big\}ds\Big].
\end{align}

{\bf Step 1:} we estimate the term $\dbE^{\dbQ^*}[H^\e_2(T)]$. By second-order Taylor's expansion, \autoref{A1}, \autoref{state}, and Dominated convergence theorem, one can deduce that
	\begin{equation}\label{co1}
\begin{aligned}
&\dbE^{\dbQ^*}\big[|H^\e_2(T)|\big]\\
&= \dbE^{\dbQ^*}\Big[\Big|h_x\big(X^*(T)\big)\big(\hat{X}^\e(T)- X_1(T)-X_2(T)\big) + \Big(\int_{0}^{1}(1-\theta)\{h_{xx}(X^*(T)+\theta \hat{X}^\e(T)) - h_{xx}(X^*(T))\}d\theta\Big)\\
&\q\q\q\q
\cd\big(\hat{X}^\e(T)\big)^2 + \frac{1}{2} h_{xx}(X^*(T))\big(\hat{X}^\e(T)+X_1(T)\big)\big(\hat{X}^\e(T)-X_1(T)\big)\Big|\Big] = o(\e).
\end{aligned}\end{equation}

{\bf Step 2:} we estimate the term $\dbE^{\dbQ^*}[\int_{0}^{T}|J_2(s)|ds]$. 
On one hand, recall that $(\hat{Y}^\e_1, \hat{Z}^\e_1)$ is defined in \eqref{y1}. Then, one can check that 
\begin{align*}
&\hat{Y}^\e_2(s)= \hat{Y}^\e_1 (s)- P_1(s)X_2(s) - \frac{1}{2}P_2(s)\big(X_1(s)\big)^2, \\
&
 \hat{Z}^\e_2(s)= \hat{Z}^\e_1 (s) - Q_1(s)X_2(s) - \frac{1}{2}Q_2(s)\big(X_1(s)\big)^2 + P_2(s)\delta \sigma(s)X_1(s)1_{[t_0, t_0+\e]}(s)\\
 &\q\q= \hat{Z}^\e_1 (s) - \tilde{N}(s) - \frac{1}{2}\tilde{N}_2(s) + P_2(s)\delta \sigma(s)X_1(s)1_{[t_0, t_0+\e]}(s) .
\end{align*}
Thanks to \autoref{prop3.9}, \autoref{A1}, H$\ddot{\rm o}$lder's inequality, \autoref{xx}, and \autoref{wellpose of ad}, we obtain that  
\begin{align}\label{y2}
	\dbE^{\dbQ^*}\Big[\mathop{\sup}_{s\in [0, T]} |\hat{Y}_2^\e(s)|^2 + \int_{0}^{T}|\hat{Z}_2^\e(s)|^2ds \Big] = O(\e^{2}).
\end{align}
On the other hand, by first-order Taylor's expansion, one can check that
\begin{align}
&|J_2(s)| \leq K\Big\{ |X_1(s)| + |X_2(s)|+|O_1(s)| + |O_2(s)| + |\hat{Y}_2^\e(s)| + |\hat{Z}_2^\e(s)| \Big\}\cd \big[|\hat{Y}_2^\e(s)| + |\hat{Z}_2^\e(s)| \big]\nonumber\\
&
\leq K\Big\{ |X_1(s)| + |X_2(s)|+|M_1(s)| + |\tilde{N}_1(s)| + |\tilde{M}(s)|+|\tilde{N}(s)| +  |M_2(s)|+|\tilde{N}_2(s)| \nonumber\\
&\q\q\q+ |P_2(s)||X_1(s)|1_{[t_0, t_0+\e]}(s)+|\hat{Y}_2^\e(s)| + |\hat{Z}_2^\e(s)| \Big\}\cd \big[|\hat{Y}_2^\e(s)|  + |\hat{Z}_2^\e(s)| \big].\label{j2}
\end{align}
It then follows from  H$\ddot{\rm o}$lder's inequality, \autoref{state}, \autoref{wellpose of ad}, \autoref{xx}, and \eqref{y2}  that 
\begin{align}\label{co2}
\dbE^{\dbQ^*}\Big[\int_{0}^{T}|J_2(s)|ds\Big] = o(\e).
\end{align}

{\bf Step 3:} we estimate the term $\dbE^{\dbQ^*}\big[\int_{0}^{T}|V_1(s)+J_3(s)|ds\big]$. It is easy to check that
\begin{align*}
Y^*(s)+ \hat{Y}^\e_2(s) + O_1(s) = Y^\e(s),  \q Z^*(s)+ \hat{Z}^\e_2(s) - O_2(s) = Z^\e(s) + P_1(s)\delta\sigma(s)1_{[t_0,t_0+\e]}(s).
\end{align*}
Then, we deduce that
\begin{align*}
V_1(s)+J_3(s)= D_1(s) 1_{[0, t_0)\bigcup(t_0+\e, T]}(s) + D_2(s)1_{[t_0, t_0+\e]}(s) - D_3(s)1_{[t_0, t_0+\e]}(s), 
\end{align*}
where 
\begin{align*}
&D_1(s):= f\big(s, X^\e(s), Y^\e(s), Z^\e(s), u^*(s)\big) - f\big(s, X^*(s)+X_1(s)+X_2(s), Y^\e(s), Z^\e(s), u^*(s)\big) \\
& D_2(s):= f\big(s, X^\e(s), Y^\e(s), Z^\e(s), u(s)\big) - f\big(s, X^*(s), Y^*(s), Z^*(s) - P_1(s)\delta \sigma(s)1_{[t_0, t_0+\e]}(s), u(s)\big)\\
&
D_3(s):= f\big(s, X^*(s)+X_1(s)+X_2(s), Y^\e(s), Z^\e(s)+P_1(s)\delta\sigma(s)1_{[t_0, t_0+\e]}(s), u^*(s)\big) - f(s).
\end{align*}
%
%
We can check that $\hat{Z}^\e(s) + P_1(s)\delta \sigma(s)1_{[t_0, t_0+\e]}(s) = \hat{Z}^\e_1(s) + Q_1(s) X_1(s)$ and recall that $\hat{X}_2(s)= \hat{X}^\e(s)-X_1(s)-X_2(s)$.
It then follows from \autoref{A1} that
\begin{equation}
\begin{aligned}\label{co3}
&\dbE^{\dbQ^*}\Big[\int_{0}^{T}|V_1(s)+J_3(s)|ds\Big]\\ 
&
\leq K\dbE^{\dbQ^*}\Big[\int_{0}^{T}\Big\{|\hat{X}_2^\e(s) |+\Big(|\hat{X}^\e(s)| + |X_1(s)|+|X_2(s)|+ |\hat{Y}^\e(s)| +  \big[|\hat{Z}_1^\e (s)|+|Q_1(s)X_1(s)|\big]\\
&\q\q\q\q\q\q\q\q\q\q\q\cd\big[ |Z^*(s)|+|\hat{Z}_1^\e(s)|+|Q_1(s)X_1(s)| + 1_{[t_0, t_0+\e]}(s)\big] \Big)1_{[t_0, t_0+\e]}(s)\Big\}ds \Big].
\end{aligned}\end{equation}
By H$\ddot{\rm o}$lder's inequality, \autoref{system-wellposedness}, \autoref{prop3.9}, we deduce that
\begin{align*}
&\dbE^{\dbQ^*}\Big[\int_{t_0}^{t_0+\e}|Z^*(s)|\cd |\hat{Z}_1^\e(s)|ds\Big] \leq \dbE^{\dbQ^*}\Big[\int_{t_0}^{t_0+\e}|Z^*(s)|^2ds\Big]^{\frac{1}{2}} \dbE^{\dbQ^*}\Big[\int_{t_0}^{t_0+\e}|\hat{Z}^\e_1(s)|^2ds\Big]^{\frac{1}{2}} \leq K_{t_0}\e^\frac{3}{2}.
\end{align*}
Similarly, we deduce that
\begin{align*}
	&\dbE^{\dbQ^*}\Big[\int_{t_0}^{t_0+\e}|Z^*(s)Q_1(s)X_1(s)|ds\Big] \\
	&\leq \dbE^{\dbQ^*}\Big[\int_{t_0}^{t_0+\e}|Z^*(s)|^2ds\Big]^{\frac{1}{2}}\dbE^{\dbQ^*}\Big[\Big(\int_{0}^{T}|Q_1(s)|^21_{[t_0, t_0+\e]}(s)ds\Big)^2\Big]^{\frac{1}{4}}  \dbE^{\dbQ^*}\Big[\mathop{\sup}_{s\in [0, T]}|X_1(s)|^4\Big]^{\frac{1}{4}} \\
	&
	\leq K_{t_0}\dbE^{\dbQ^*}\Big[\Big(\int_{0}^{T}|Q_1(s)|^21_{[t_0, t_0+\e]}(s)ds\Big)^2\Big]^{\frac{1}{4}} \e.
\end{align*}
By Dominated convergence theorem, one can deduce that
\begin{align*}
\lim\limits_{\e \rightarrow 0}\dbE^{\dbQ^*}\Big[\Big(\int_{0}^{T}|Q_1(s)|^21_{[t_0, t_0+\e]}(s)ds\Big)^2\Big]^{\frac{1}{4}} = 0.
\end{align*}
Thus, we obtain that 
$\dbE^{\dbQ^*}\Big[\int_{t_0}^{t_0+\e}|Z^*(s)Q_1(s)X_1(s)|ds\Big]  = o(\e).$
The other terms in the right sides of \eqref{co3} can be deduced similarly.
Therefore, we obtain that
\begin{align*}
\dbE^{\dbQ^*}\Big[\int_{0}^{T}|V_1(s)+J_3(s)|ds\Big] = o(\e).
\end{align*}

{\bf Step 4:} for the term $\dbE^{\dbQ^*}\big[\int_{0}^{T}|J_4(s)|ds\big]$, thanks to H$\ddot{\rm o}$lder's inequality, \autoref{A1}, \autoref{state},  \autoref{xx}, and Dominated convergence theorem, we deduce that
\begin{align*}
&\limsup\limits_{\e \rightarrow 0}\frac{1}{\e}\dbE^{\dbQ^*}\Big[\int_{0}^{T}|J_4(s)|ds\Big] \\
&\leq K\limsup\limits_{\e \rightarrow 0}\dbE^{\dbQ^*}\Big[\mathop{\sup}_{s\in[0, T]}\frac{|X_1(s)|^4}{\e^2}\Big]^\frac{1}{2} \dbE^{\dbQ^*}\Big[\Big(\int_{0}^{T}|D^{2\e}f(s) - D^2f(s)|(1+|Q_1(s)|^2)ds\Big)^2\Big]^\frac{1}{2} \\
&\leq K\limsup\limits_{\e \rightarrow 0} \dbE^{\dbQ^*}\Big[\Big(\int_{0}^{T}|D^{2\e}f(s) - D^2f(s)|(1+|Q_1(s)|^2)ds\Big)^2\Big]^\frac{1}{2}  = 0.
\end{align*}
Thus, we obtain that
$
	\dbE^{\dbQ^*}\Big[\int_{0}^{T}|J_4(s)|ds\Big] = o(\e).
$

{\bf Step 5:} we estimate the term $\dbE^{\dbQ^*}\big[\int_{0}^{T}|J_5(s)|ds\big]$.
%
%
On one hand, by H$\ddot{\rm o}$lder's inequality, \autoref{state}, and \autoref{q*}, we obtain that
\begin{align*}
	&\dbE^{\dbQ^*}\Big[\int_{t_0}^{t_0+\e}|f_z(s)P_2(s)\delta \sigma(s)X_1(s)|ds\Big]\leq K\dbE^{\dbQ^*}\Big[\mathop{\sup}_{s\in[0, T]}|X_1(s)|^2\Big]^{\frac{1}{2}}\dbE^{\dbQ^*}\Big[\Big(\int_{t_0}^{t_0+\e}|f_z(s)|ds\Big)^2\Big] ^\frac{1}{2} \\
	&
	\leq K  \dbE^{\dbQ^*}\Big[\mathop{\sup}_{s\in[0, T]}|X_1(s)|^2\Big]^\frac{1}{2} \cd \dbE^{\dbQ^*}\Big[\int_{t_0}^{t_0+\e}|f_z(s)|^2ds\Big]^\frac{1}{2}\e^{\frac{1}{2}}  \leq K_{t_0} \e^{\frac{3}{2}},
\end{align*}
and by a similar deduction, we obtain that
\begin{align*}
\dbE^{\dbQ^*}\Big[\int_{t_0}^{t_0+\e}|Q_2(s)\delta \sigma(s)X_1(s)|ds\big] \leq K_{t_0}\e^{\frac{3}{2}}.
\end{align*}
On the other hand, 
by \autoref{xx}, we deduce that 
\begin{equation}\label{oo}
\begin{aligned}
&\dbE^{\dbQ^*}\Big[\int_{0}^{T}|-\tilde{N}(s) - \frac{1}{2}\tilde{N}_2(s) + P_2(s)\delta\sigma(s) X_1(s)1_{[t_0, t_0+\e]}(s)|^2ds\Big] = O(\e^2),\\ 
 &\dbE^{\dbQ^*}\Big[\int_{0}^{T}|O_2(s)|^2ds\Big] = O(\e).
\end{aligned}\end{equation}
It then follows from H$\ddot{\rm o}$lder's inequality, \autoref{state}, \autoref{xx}, and \eqref{oo} that 
\begin{align*}
\dbE^{\dbQ^*}\Big[\int_{0}^{T}|f_{yz}^\e(s)\tilde{O}_5(s)|ds\Big] &\leq K\bigg\{\dbE^{\dbQ^*}\Big[\mathop{\sup}_{s\in[0, T]}|X_1(s)|^2\Big]^{\frac{1}{2}}\dbE^{\dbQ^*}\Big[\int_{0}^{T}|O_2(s)+Q_1(s)X_1(s)|^2ds\Big]^{\frac{1}{2}} \\
&\q\q\q+ \dbE^{\dbQ^*}\Big[\int_{0}^{T}|O_2(s)|^2ds\Big]^{\frac{1}{2}}\dbE^{\dbQ^*}\Big[\int_{0}^{T}|\tilde{M}(s)+\frac{1}{2}M_2(s)|^2ds\Big]^{\frac{1}{2}}\bigg\} \leq K\e^\frac{3}{2}.
\end{align*}
For the other terms of $\dbE^{\dbQ^*}\big[\int_{0}^{T}|J_5(s)|ds\big]$, we can estimate it similarly.
Consequently, we obtain that
\begin{align}
	\dbE^{\dbQ^*}\Big[\int_{0}^{T}|J_5(s)|ds\Big] = o(\e).
\end{align}

Therefore, combining Steps 1-5, we obtain \eqref{end}.

\end{proof}
\section{Maximum principle}\label{section7}
Based on the above preparation, now we can state the general maximum principle. Define the Hamiltonian function: $\mathcal{H}: [0, T] \times \Om \times \dbR^6 \times U \rightarrow \dbR$ by
\begin{align*}
	\mathcal{H}(t,x, y,z,P_1,Q_1, P_2, u) \triangleq&  P_1 b(t, x, u)-  Q_1 \sigma(t, u) - f\big(t, x, y, z- P_1 \big(\sigma (t, u) - \sigma(t, u^*(t))\big), u\big) \\
	& + \frac{1}{2}P_2\Big(\sigma(t,  u) - \sigma(t, u^*(t))\Big)^2,
\end{align*}
where $P_1, Q_1$ and $ P_2$ are defined in \eqref{first ad} and \eqref{second ad}, respectively.
Now we present the stochastic maximum principle. Recall that $t_0$ is the left endpoint for the small interval of perturbation and $\Gamma, I$ are defined in \eqref{ga} and \eqref{i}, respectively.
\begin{theorem}\label{SMP-thm}
	Suppose that \autoref{A1}  holds. Let $u^*(\cd) \in \mathcal{U}_{ad}$ be optimal control and $(X^*(\cd), Y^*(\cd), Z^*(\cd))$ be the corresponding state trajectories of \eqref{system}. Then the following stochastic maximum principle holds:
	\begin{align}\label{SMP}
		 \Delta \mathcal{H}(t, u)\geq 0, \q \forall u\in U, \q a.e., \q a.s.,
	\end{align}
	where
	\begin{align*}
	\Delta \mathcal{H}(t, u):=&\mathcal{H}\big(t, X^*(t), Y^*(t), Z^*(t), P_1(t), Q_1(t), P_2(t), u\big)\\
	&- \mathcal{H}\big(t, X^*(t), Y^*(t), Z^*(t), P_1(t), Q_1(t), P_2(t), u^*(t)\big).
	\end{align*}
	\end{theorem}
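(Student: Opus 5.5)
The plan is to combine optimality of $u^*$ with \autoref{last} and an explicit representation of $\hat{Y}(0)$. Since $u^*$ is optimal and $u^\e\in\mathcal{U}_{ad}$, we have $Y^\e(0)-Y^*(0)=J(u^\e)-J(u^*)\geq 0$. By \autoref{last}, $Y^\e(0)-Y^*(0)=\hat{Y}(0)+o(\e)$, so
\begin{align*}
0\leq \hat{Y}(0)+o(\e).
\end{align*}

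Next we compute $\hat{Y}(0)$ explicitly. Under $\dbQ^*$ the variational equation \eqref{va} reads
\begin{align*}
\hat{Y}(t)=-\int_t^T\{f_y(s)\hat{Y}(s)+I(s)\}ds+\int_t^T\hat{Z}(s)dW^{q^*}(s).
\end{align*}
This equation is linear with bounded $f_y$, and its $L^2$-integrability was established above. We apply It\^o's formula to $\Gamma(t)\hat{Y}(t)$, with $\Gamma$ defined in \eqref{ga}. The stochastic integral is a true $\dbQ^*$-martingale, since $\Gamma$ is bounded and $\hat{Z}\in L^2_\dbF(\dbQ^*)$. Taking $\dbE^{\dbQ^*}$ gives
\begin{align*}
\hat{Y}(0)=-\dbE^{\dbQ^*}\Big[\int_{t_0}^{t_0+\e}\Gamma(s)\tilde I(s)ds\Big],
\end{align*}
where $\tilde I$ denotes the bracket in \eqref{i} without the indicator. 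Comparing with the Hamiltonian, one checks directly that $\tilde I(s)=-\Delta\mathcal{H}(s,u(s))$: the $-\frac12P_2(\delta\sigma)^2$ term and the term $f(\cdot,Z^*-P_1\delta\sigma,u)-f(s)$ match, and the $Q_1\sigma$ term enters with a sign consistent with $\tilde I$ as well. Combining this with the inequality from the first step, dividing by $\e$, and letting $\e\to0$ yields
\begin{align*}
\liminf_{\e\to0}\frac1\e\dbE^{\dbQ^*}\Big[\int_{t_0}^{t_0+\e}\Gamma(s)\Delta\mathcal{H}(s,u(s))ds\Big]\ge0 .
\end{align*}

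The remaining task is to localize in $t_0$ and $\omega$. The function $l(t):=\dbE^{\dbQ^*}[\Gamma(t)\Delta\mathcal{H}(t,u(t))]$ is integrable on $[0,T]$. The bounds used are: $|f(\cdot,Z^*-P_1\delta\sigma,u)-f|\le K(1+|X^*|+|Z^*|)$, $P_1,P_2\in S^\infty_\dbF(\dbQ^*)$, $Q_1\in L^2_\dbF(\dbQ^*)$, \autoref{ystate}, and \autoref{state}. Hence by \autoref{Lebes}, $l(t_0)\ge0$ for a.e.\ $t_0$. To pass to pointwise-in-$\omega$ we choose controls $u(t)=v1_A+u^*(t)1_{A^c}$ with $v\in U$ fixed and $A\in\sF_{t_0}$. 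The step needing care is that $A$ must depend on $t_0$; the standard remedy is to use a countable generating family $\{A_j\}$ of $\sF_T$ (available since $\dbF$ is Brownian) and a countable dense subset $\{v_k\}\subset U$. We fix the exceptional null set of $t_0$'s outside all countably many applications, and take $u=v_k1_{A_j}+u^*1_{A_j^c}$ with $A_j$ approximated by $\sF_{t_0}$-measurable sets. By progressive measurability and the martingale convergence theorem (or replacing $A_j$ by $\dbE^{\dbQ^*}[1_{A_j}|\sF_{t_0}]$-based level sets), this gives $\Gamma(t_0)\Delta\mathcal{H}(t_0,v_k)\ge0$ $\dbQ^*$-a.s.

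Finally, $\Gamma>0$, and $\dbQ^*$ is equivalent to $\dbP$ because the density is a strictly positive uniformly integrable martingale. Continuity of $\Delta\mathcal{H}(t,\cdot)$ in $u$ would extend the inequality from the dense set $\{v_k\}$ to all of $U$. However, $\sigma$ and $f$ are only measurable in $u$, so the extension should instead be done by taking a countable family adapted to measurability, or by noting that $U$ itself can be treated through a countable set that is dense for the Borel structure. This localization step is the main obstacle I expect; the analytic work is already contained in \autoref{last}.
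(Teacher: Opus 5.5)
Your argument follows the paper's proof. Optimality together with \autoref{last} gives $0\le \hat Y(0)+o(\e)$. It\^o's formula for $\Gamma\hat Y$ under $\dbQ^*$ then gives $\hat Y(0)=-\dbE^{\dbQ^*}[\int_0^T\Gamma(t)I(t)\,dt]$, and the bracket in \eqref{i} is exactly $-\Delta\mathcal{H}(t,u(t))$; your sign check is correct. The paper finishes in one sentence, citing $\Gamma>0$ and the equivalence of $\dbQ^*$ and $\dbP$. So the localization you flag is the only place where you add anything, and the remedies you sketch there do not work as written:
\begin{itemize}
\item sets $A_j$ from a generating family of $\sF_T$ do not give adapted controls;
\item ``approximating $A_j$ by $\sF_{t_0}$-measurable sets'', or using level sets of $\dbE^{\dbQ^*}[1_{A_j}\,|\,\sF_{t_0}]$, makes the control depend on $t_0$ again.
\end{itemize}
That dependence is exactly what breaks the Lebesgue-point argument, since the exceptional null set of $t_0$'s would then vary with $t_0$.

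A single control avoids all countability issues. Fix $v\in U$ and set $u(t):=v$ on $\{\Delta\mathcal{H}(t,v)<0\}$ and $u(t):=u^*(t)$ elsewhere.
\begin{itemize}
\item This set is progressive, being built from $X^*,Y^*,Z^*,P_1,Q_1,P_2,u^*$, so $u\in\mathcal{U}_{ad}$.
\item Since $\Delta\mathcal{H}(t,u^*(t))=0$, you get $\Delta\mathcal{H}(t,u(t))=-(\Delta\mathcal{H}(t,v))^-$.
\item The function $t\mapsto\dbE^{\dbQ^*}[\Gamma(t)(\Delta\mathcal{H}(t,v))^-]$ is integrable by the bounds you listed.
\end{itemize}
Apply your inequality to this one $u$, discard the null set of $t_0$ where \autoref{last} may fail for it, and use \autoref{Lebes}. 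This gives $\dbE^{\dbQ^*}[\Gamma(t_0)(\Delta\mathcal{H}(t_0,v))^-]\le 0$ for a.e.\ $t_0$. Hence $\Delta\mathcal{H}(t_0,v)\ge 0$ $\dbQ^*$-a.s., and therefore $\dbP$-a.s.

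This proves the statement for each fixed $u\in U$, which is what both your argument and the paper's actually deliver; no dense subset of $U$ is needed. A single exceptional set valid for all $u\in U$ at once would need extra structure: continuity of $b,\sigma,f$ in $u$, which \autoref{A1} does not assume, or a measurable-selection argument. ``A countable set dense for the Borel structure'' does not supply this.
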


\begin{proof}
	In view of \eqref{va} and by \autoref{last}, we obtain that 
	\begin{equation}\label{qu}
	\begin{aligned}
		0 \leq &J(u^{\e}(\cd)) - J(u^*(\cd)) = Y^{\e}(0) - Y^*(0) = \hat{Y}(0) + o(\e) = -\dbE^{\dbQ^*}\Big[\int_{0}^{T}\Gamma (t)I(t)dt\Big]+ o(\e) \\
		&= \dbE^{\dbQ^*}\Big[\int_{t_0}^{t_0+\e}\Gamma(t)\Delta \mathcal{H}(t, u) dt\Big] + o(\e).
	\end{aligned}	\end{equation}
	Since $\Gamma(t) > 0$, for $t\in [0, T]$ and the fact that the probability measure $\dbQ^*$ is equivalent to $\dbP$, we obtain the stochastic maximum principle.
\end{proof}

\noindent{\bf Declarations.} 
AI was utilized during the drafting process solely to enhance the linguistic quality, correct grammar, and provide a preliminary check of the mathematical consistency. All core concepts and mathematical proofs remain the original work of the authors, who retain sole accountability for the accuracy of the paper.


\end{document}